\setlist[description]{leftmargin=\parindent,labelindent=\parindent}
\theoremstyle{plain}
\newtheorem{theorem}{Theorem}[subsection]
\newtheorem{proposition}[theorem]{Proposition}
\newtheorem{lemma}[theorem]{Lemma}
\newtheorem{corollary}[theorem]{Corollary}
\newtheorem{fact}[theorem]{Fact}
\newtheorem{mainthm}{Theorem}
\theoremstyle{definition}
\newtheorem{definition}[theorem]{Definition}
\newtheorem{construction}[theorem]{Construction}
\newtheorem{observation}[theorem]{Observation}
\theoremstyle{remark}
\newtheorem{remark}[theorem]{Remark}
\newtheorem{question}[theorem]{Question}
\def\@tocline#1#2#3#4#5#6#7{\relax
	\ifnum #1>\c@tocdepth 
	\else
	\par \addpenalty\@secpenalty\addvspace{#2}%
	\begingroup \hyphenpenalty\@M
	\@ifempty{#4}{%
		\@tempdima\csname r@tocindent\number#1\endcsname\relax
	}{%
		\@tempdima#4\relax
	}%
	\parindent\z@ \leftskip#3\relax \advance\leftskip\@tempdima\relax
	\rightskip\@pnumwidth plus4em \parfillskip-\@pnumwidth
	#5\leavevmode\hskip-\@tempdima
	\ifcase #1
	\or\or \hskip 1em \or \hskip 2em \else \hskip 3em \fi%
	#6\nobreak\relax
	\dotfill\hbox to\@pnumwidth{\@tocpagenum{#7}}\par
	\nobreak
	\endgroup
	\fi}
\def\B{\mathcal{B}}
\newcommand{\bl}[2]{\mathrm{Bl}_{#1}{#2}}
\def\A{\mathcal{A}}
\def\AA{\mathbb{A}}
\def\aut{\lambda^{\dagger}}
\def\Mbar{\overline{\mathcal{M}}}
\newcommand{\T}[1]{\widetilde{#1}}
\def\D{\delta}
\def\F{\mathcal{F}}
\def\Domega{\delta_{\omega}}
\def\PP{\mathbb{P}}
\def\pt{\mathrm{pt}}
\def\CC{\mathbb{C}}
\def\Z{\mathbb{Z}}
\def\N{\mathbb{N}}
\def\R{\mathbb{R}}
\def\Rgeq{\mathbb{R}_{\geq 0}}
\def\Gm{\mathbb{G}_\mathrm{m}}
\def\Ga{\mathbb{G}_\mathrm{a}}
\def\S{\Sigma}
\def\Sn{\Sigma^n}
\def\T{\mathtt{T}}
\def\tropmod{\Pi_n(\Sigma)}
\def\tropfam{\Pi_n^+(\Sigma)}
\def\cone{\mathrm{cone}}
\def\gridexp{\mathcal{E}_n(X|D)}
\def\gridexpfam{\mathcal{E}_n^+(X|D)}
\def\fam{\mathfrak{X}}
\def\X{\mathcal{X}}
\def\logProd{(X|D)^{[n]}}
\newcommand{\logProdexample}[3]{(#1|#2)^{[#3]}}
\def\Conf{\mathrm{Conf}_n(X)}
\def\FM{\mathrm{FM}_n(X)}
\newcommand{\FMexample}[2]{\mathrm{FM}_#1(#2)}
\def\logConf{\mathrm{Conf}_n(X \backslash D)}
\def\logFM{\mathrm{FM}_n(X|D)}
\def\logFMfam{\mathrm{FM}_n(X|D)^+}
\def\piFM{\pi_{\mathrm{FM}}}
\newcommand{\logFMexample}[3]{\mathrm{FM}_#1(#2 | #3)}
\def\oneton{\{1,\ldots,n\}}
\newcommand{\rthOrthant}[1][r]{\mathbb{R}_{\geq 0}^{#1}}
\newcommand{\cal}[1]{\mathcal{#1}}
\def\tropmoddegen{\Pi_n(\Delta)}
\def\tropfamdegen{\Pi_n^+(\Delta)}
\def\tropmodpoly{\mathsf{P}_n(\Delta)}
\def\tropfampoly{\mathsf{P}_n^+(\Delta)}
\def\tropmoddegenFM{\Pi_n^{\text{FM}}(\Delta)}
\def\tropfamdegenFM{\Pi_n^{\text{FM},+}(\Delta)}
\def\simpexp{\mathcal{E}_n(\Delta)}
\def\simpexpfam{\mathcal{E}_n^+(\Delta)}
\def\simpexpfamW{\mathcal{E}_n^+(W/B)}
\def\pointsdegen{W_{\omega}^{[n]}}
\def\pointsdegenfam{\mathfrak{W}}
\def\FMdegen{\mathrm{FM}_n(W/B)}
\def\FMdegenfam{\mathrm{FM}_n(W/B)^+}
\DeclareMathOperator{\Spec}{Spec}
\DeclareMathOperator{\trop}{trop}
\DeclareMathOperator{\val}{val}
\newcommand{\pullback}[1]{\arrow[#1, phantom, "\square" , color=black]}
\newcommand{\fspullback}[1]{\arrow[#1, phantom, "\square_{\mathrm{fs}}" , color=black]}
\title{Logarithmic Fulton--MacPherson configuration spaces}
\author{Siao Chi Mok}
\begin{document}
	
	\begin{abstract}
		Using techniques in logarithmic geometry, we construct a logarithmic analogue of the Fulton--MacPherson configuration spaces. We similarly construct a logarithmically smooth degeneration of the Fulton--MacPherson configuration spaces. Both constructions parametrise point configurations on certain target degenerations arising from both logarithmic geometry and the original Fulton--MacPherson construction. The degeneration satisfies a ``degeneration formula" -- each irreducible component of its special fibre can be described as a proper birational modification of a product of logarithmic Fulton--MacPherson configuration spaces.
	\end{abstract}

	\maketitle

	\tableofcontents
	
\section{Introduction}
Let $X$ be a smooth projective variety over an algebraically closed field $k$. In the seminal paper \cite{FM}, Fulton and MacPherson constructed a simple normal crossings compactification $\FM$ of the ordered configuration space of $n$ distinct points $\Conf$. The compactification $\FM$ is constructed via an iterated blow-up of $X^n$ along diagonals to separate the points. The boundary $\FM \backslash \Conf$ parametrises point configurations on \emph{Fulton--MacPherson (FM) degenerations}; when points collide with each other at $x \in X$, in the limit a $\PP(T_x X \oplus \mathbbm{1})$-bubble is attached to $X$ and the colliding points instead land in the bubble, on the smooth locus. This procedure is repeated until all points have been separated. 

Among numerous applications, the Fulton--MacPherson space $\FM$ was used by Kim, Kresch and Oh in \cite{stable_ramified} to build a moduli space of \emph{stable ramified maps}, i.e. maps from curves with targets some Fulton--MacPherson degenerations of $X$, and to define and study \emph{unramified Gromov--Witten theory}. It was conjectured by Pandharipande \cite[Conjecture 5.2.1]{stable_ramified} that when $X$ is Fano, these unramified Gromov--Witten invariants are equal to Gopakumar--Vafa invariants, and this conjecture was later proved by Nesterov in \cite{uGWequalsGV} by obtaining a wall-crossing formula between Gromov--Witten invariants and unramified Gromov--Witten invariants.

Two questions about Fulton--MacPherson spaces motivate this present work. 
\begin{question} \label{question:logFM}
	A compactification $X$ of a non-proper variety $U$ with simple normal crossings boundary $D \colonequals X \backslash U$ is called a \emph{simple normal crossings compactification}. Given a non-proper variety $U$ with a simple normal crossings compactification $(X,D)$, how can we construct a simple normal crossings compactification of $\mathrm{Conf}_n(U)$?
\end{question}
\begin{question} \label{question:FMdegen}
	Given a degeneration $\omega\colon W \to B$ of $X$ over a curve $B$, where the special fibre is a simple normal crossings divisor, how does the space $\FM$ degenerate?
\end{question}

In characteristic zero, it follows from results of \cite{hironaka} and \cite{nagata} that every variety $U$ admits a simple normal crossings compactification. A degeneration as described in Question~\ref{question:FMdegen} is called a \emph{simple normal crossings} or \emph{semistable} degeneration.

Note that there exist quasi-projective varieties $U$ without a compactification by a smooth divisor. A result by Danilov and Thuillier \cite{danilov, htpy-dualcomplex} says that the homotopy type of the dual complex of the boundary divisor (the \emph{boundary complex}) is independent of the choice of compactification. Since the boundary complex of a compactification with smooth boundary is a point, any quasi-projective variety $U$ admitting a compactification with non-contractible boundary complex cannot have a compactification with a smooth boundary. An example is $(\CC^*)^2$ with boundary complex $\Delta(\PP^2 \backslash (\CC^*)^2)$ homotopic to $S^1$. Thus, it is desirable to work with the full generality of simple normal crossings compactifications.

With the initial compactification $(X,D)$ of $U$ as input, our goal is to construct a simple normal crossings compactification $\logFM$ of $\logConf$.

Our plan is to proceed in two steps:
\begin{enumerate}
	\item Construct a moduli space $\logProd$ of points on certain degenerations of $X$ along $D$ (\emph{grid expansions}), to separate the points from the boundary;
	\item Iteratively blow-up $\logProd$ to separate the points from each other, in the style of \cite{FM}. The resulting scheme is $\logFM$, which parametrises points on Fulton--MacPherson degenerations of the grid expansions (\emph{FM grid expansions}).
\end{enumerate}

In the special case when $D$ is smooth, Question~\ref{question:logFM} has been answered via a similar approach by Kim and Sato \cite{KS}. The general case requires us to use recent ideas from logarithmic and tropical geometry.

Given a degeneration $\omega\colon W \to B$ where its special fibre has smooth singular locus $D$, Abramovich and Fantechi \cite{abram_fantechi} constructed a moduli space of points on the fibres of \emph{expanded degenerations}. Building on this, Routis \cite{weightedFM} constructed a degeneration of $\FM$, answering Question~\ref{question:FMdegen} when $D$ is smooth.

\subsection{Main results}
We answer Question~\ref{question:logFM} by first constructing a moduli space $\logProd$ of \emph{stable $n$-pointed grid expansions} in Section~\ref{section:logProd}, then iteratively blow-up $\logProd$ to construct the desired compactification $\logFM$. The compactification $\logFM$ comes with a family $\piFM\colon \logFMfam \to \logFM$. In Section~\ref{section:logFM} we prove:

\begin{mainthm}[Theorem~\ref{thm:logFM_fibres}]
    Let $(X,D)$ be a simple normal crossings pair.
    \begin{enumerate}
    	\item The scheme $\logFM$ is a simple normal crossings compactification of $\logConf$. 
    	\item The morphism $\piFM$ is a family of stable $n$-pointed FM grid expansions.
    	\item The scheme $\logFM$ admits a stratification by combinatorial types.
    \end{enumerate} 
\end{mainthm}

We call $\logFM$ the \emph{logarithmic Fulton--MacPherson configuration space} of $(X,D)$. The \emph{combinatorial type} of an \emph{FM grid expansion} records the isomorphism type of the underlying scheme and for each marked point, the label of the component containing it.

These results generalise the constructions by Kim--Sato \cite{KS}, and Abramovich--Fantechi \cite{abram_fantechi}. While these constructions all use the notion of \emph{expansions}, stemming from the work of Gieseker and Jun Li \cite{gieseker, junli}, differences remain. In constructing $\logProd$, Kim--Sato constructed $\logProd$ using the theory of wonderful compactifications, whereas both Abramovich--Fantechi and this paper use an appropriate \emph{stack of expanded degenerations}. 

The stack of expansions also appears in the logarithmic curve counting theories of Maulik and Ranganathan in \cite{logGWexp, MR20, log_enum}. When the simple normal crossings divisor $D$ has multiple components, in these papers there are usually many choices of expansions depending on the tropical types of the curves and their maps to $(X,D)$, therefore there is no canonical stack of expansions. However, in this paper a \emph{canonical} choice of expansions can be made, and universal weak semistable reduction \cite[Theorem 0.3]{semistable} \cite[Section 3.2]{uni_semistable} enables a neat construction of the stack of expansions. We call these canonically chosen expansions \emph{grid expansions}, as the components of these expansions are $(\PP^1)^k$-bundles arranged in a grid. These grid expansions are constructed from a canonical choice of \emph{polyhedral subdivisions} of the tropicalisation of $(X,D)$.

The construction of $\logFM$ has an interesting feature: when $\logFM$ is given the divisorial logarithmic structure with respect to its boundary divisors, the tropicalisation of $\logFM$ is, by construction, the moduli space of \emph{planted forests} satisfying some conditions. See Figures~\ref{fig:forest_div_exp} and \ref{fig:forest_div_fm} for examples of planted forests. In particular, when the boundary divisor $D$ is empty, the tropicalisation is the space of \emph{rooted trees}, as described in \cite[Section 2]{FM}.

Given a simple normal crossings degeneration $\omega \colon W\to B$ of $X$, we answer Question~\ref{question:FMdegen} by first constructing a moduli space $\pointsdegen$ of \emph{stable $n$-pointed expanded degenerations} in Section~\ref{section:points_degen}, then iteratively blow-up $\pointsdegen$ to construct the desired degeneration $\FMdegen \to B$ of $\FM$ in Section~\ref{section:degeneration}. This degeneration comes with a family $\piFM^W\colon \FMdegenfam \to \FMdegen$. We prove the following:

{\begin{mainthm}[Theorem~\ref{thm:FMdegen}]\hfill
	\begin{enumerate}
		\item The morphism $\FMdegen \to B$ is a proper, flat, logarithmically smooth degeneration of $\FM$ with reduced fibres. 
		\item The morphism $\piFM^W\colon \FMdegenfam \to \FMdegen$ is a flat family of stable FM expanded degenerations with $n$ sections $\mathbf{s}_1, \mathbf{s}_2, \ldots, \mathbf{s}_n$, where the sections are supported on the smooth locus of each fibre.
	\end{enumerate}
\end{mainthm}}

Like the construction in \cite[Section 8]{log_enum}, the degeneration $\FMdegen\to B$ comes with a \emph{degeneration formula}. A \emph{rigid combinatorial type} $\rho$ is a combinatorial type of FM expansions whose tropical moduli is a vertex. Let $\cal{S}_{\rho,1}$ be the corresponding $n$-marked simplex-lattice subdivision, and $Y_{\rho}$ be the corresponding expansion.

We prove in Section~\ref{subsection:degen_formula} the following:

\begin{mainthm}[Corollary \ref{thm:degen_formula}] \label{thm:degen_formula_intro} \hfill
	\begin{enumerate}
		\item The irreducible components of the special fibre of $\FMdegen \to B$ biject with rigid combinatorial types $\rho$.
		\item The irreducible component $\FMdegen(\rho)$ is a (proper birational) modification of a product of logarithmic FM spaces of components of $Y_{\rho}$ relative to their boundaries.
	\end{enumerate}
	
\end{mainthm}

Expansions are constructed using methods from toric geometry, which is defined for schemes over any base ring. Therefore, the constructions of $\logFM$ and $\FMdegen$ extend to a scheme $X$ smooth over any base ring and $D$ a reduced, simple normal crossings divisor, and any simple normal crossings degeneration of $X$. Many results persist with minor adjustments, such as replacing ``points" by ``geometric points". For simplicity, we work with a smooth projective variety $X$ over an algebraically closed field, and $D$ a simple normal crossings divisor on $X$.

\subsection{Potential applications}
An application is to use a version of $\logFM$, constructed over the stack of expansions from \cite{MR20}, to define and study the \emph{logarithmic unramified Gromov--Witten theory} of a pair $(X,D)$ -- this is work in progress \cite{expram}. The idea behind the degeneration package of $\FM$ developed in this paper is expected to lead to a degeneration formula for unramified Gromov--Witten theory of Kim--Kresch--Oh \cite{stable_ramified}. We hope that this will lead to new applications. 

Another application pertains to the intersection theory of classical and logarithmic Fulton--MacPherson spaces. For some suitably defined tautological classes $\psi_{i,j}$ (\emph{higher $\psi$-classes}) and for any polynomial $p(x_{i,j})$, one can consider an integral of the form \[N_n^X \colonequals \int_{[\FM]} p(\psi_{i,j}),\] and a generating series \[Z(X,q) \colonequals 1+ \sum_{n\geq 1} N_n^X \frac{q^n}{n!}.
\]
Similarly to \cite[Section 0.6]{cobordism}, the degeneration formula (Theorem~\ref{thm:degen_formula_intro}) suggests that $Z(X,q)$ is a cobordism invariant, thus like in \cite[Conjecture 1]{cobordism} there is a universal polynomial $Q_{\dim(X),n,p(x_{i,j})}$ such that \[N_n^X = Q_{d,n,p(x_{i,j})}(c_1(X), c_2(X), \ldots, c_{\mathrm{top}}(X)),\] where $c_k(X) = c_k(T_X)$ is the $k$th Chern number of $X$. The existence of this universal polynomial can also be deduced from the explicit presentation of the cohomology of $\FM$ \cite[Theorem 6]{FM}. The logarithmic analogue will be an interesting problem to study, and similar formulae for the classical Fulton--MacPherson space have been obtained by Nesterov in \cite{hilb_fm}.

\subsection{Proximate moduli}
As noted before, the works of Kim--Sato \cite{KS}, Abramovich--Fantechi \cite{abram_fantechi} and Routis \cite{weightedFM} are important precursors of the present work. 

\subsubsection{Fulton--MacPherson space for relative curves}
Consider the forgetful morphism \[\Mbar_{g,n+d}^{\log} \to \Mbar_{g,d}^{\log}\] between moduli spaces of stable log curves, which is semistable. The geometric fibre over the point $(C, p_{n+1}, \ldots, p_{n+d}) \in \Mbar_{g,d}^{\log}$ is then a logarithmically smooth compactification of $\mathrm{Conf}_n(C^{\text{sm}}\backslash \{p_{n+1}, \ldots, p_{n+d}\})$. When $C$ is smooth, this recovers $\logFMexample{n}{C}{p_{n+1} + \cdots + p_{n+d}}$. As a generalisation, Bejleri and Landesman constructed normal crossings compactifications of the Hurwitz spaces of $G$-covers of $C$ and of the universal curve $\mathcal{C} \to \Mbar_{g,d}^{\log}$, see \cite[Theorem B.1.2, Theorem B.3.2, Remark B.3.4]{landesman}.
\subsubsection{Weighted Fulton--MacPherson spaces}
In \cite{weightedFM}, Routis constructs a space which parametrises stable weighted configurations of points, generalising both the classical Fulton--MacPherson spaces and the Hassett spaces of weighted pointed rational curves. In the setting of \cite{abram_fantechi}, a degeneration of weighted Fulton--MacPherson spaces has also been constructed in \cite{weightedFM}. We expect weighted analogues of our constructions to exist.

\subsubsection{Toric configuration spaces}
The work of Nabijou \cite{toric_conf} studies compactifications of point configurations on an algebraic torus $(\mathbb{C}^*)^r$. The combinatorial techniques in both \cite{toric_conf} and this paper are similar, and many approaches in this paper draw inspiration from it. A key difference is that the points are allowed to collide arbitrarily in the toric configuration spaces, but the points are separated in $\logFM$.

Nonetheless, we can still compare $\logProd$ with the toric configuration spaces. When $(X,D)$ is a toric pair, the space $\logProd$ is an example of the toric configuration spaces. An interesting example can be found in Section~\ref{section:losev_manin}. However, we construct $\logProd$ by fixing a choice of subdivisions, whereas Nabijou studies the collection of all subdivisions with some interesting modular interpretations in combinatorics. There also exist toric configuration spaces which cannot be globally expressed in the form $\logProd$ for any toric variety $X$. 

\subsubsection{Degenerations of Hilbert scheme of points}
Given any semistable degeneration $W\to B$ of a surface $X$, the work of Tschanz \cite{calla} gives an explicit local construction of a semistable degeneration of the Hilbert schemes of points of $X$. This construction considers 0-dimensional subschemes on a fixed choice of expansions of the special fibre $W_0$, which parallels the degeneration construction in this paper. A key difference lies in the choice of expansions; the choice in \cite{calla} makes the degenerations smooth but not symmetric, whereas the degenerations in this paper can be singular but are symmetric. 

Like the degeneration construction in this work, the construction in \cite{calla} is local. A recent follow-up by Shafi and Tschanz \cite{qaasim_calla} describes a strategy to globalise the local construction in \cite{calla} to a given arbitrary type III degeneration of a K3 surface, giving a good type III degeneration of $\mathrm{Hilb}^m(\mathrm{K3})$. These are the first explicit examples of a good type III degeneration of hyperk\"ahler varieties with dimension greater than two.


\subsubsection{Pointed trees of projective spaces and polypermutohedral varieties}
The works \cite{trees_projective} and \cite{polymatroids} feature moduli which utilise classical, weighted and logarithmic Fulton--MacPherson spaces. The work \cite{polymatroids} compactifies the space of distinct weighted points on a flag via weighted Fulton--MacPherson spaces, which recovers the construction in \cite{trees_projective}. In addition, the work \cite{polymatroids} also compactifies the space of not necessarily distinct points on a flag via the construction of Kim--Sato \cite{KS}, denoted $\logProd$ in this paper. Both constructions admit meaningful interpretations in combinatorics, and it would be interesting to explore possible interactions between our works.

In the construction of $\logFM$, there is a significant increase in complexity compared to Kim--Sato when the divisor $D$ is not smooth, so it is not clear how to construct $\logFM$ using wonderful compactifications. Meanwhile, we note that the stack of expansions in \cite{MR20} is only defined up to certain birational transformations. In comparison, this paper benefits from a canonical choice of expansions, namely \emph{grid expansions}, enabling a more detailed and granular description of the stack of grid expansions. The key insight in this paper is the systematic use of tropical and logarithmic geometry, especially Artin fans to construct the stack of grid expansions.

\subsection{Acknowledgements}
I am greatly indebted to my supervisor Dhruv Ranganathan for his invaluable input and relentless support. I would like to extend my gratitude to Dan Abramovich and Barbara Fantechi for their interest in this topic and inspiring discussions about Fulton--MacPherson degenerations. I have greatly benefited from informative conversations with Samir Canning, Renzo Cavalieri, Melody Chan, Robert Crumplin, Patrick Kennedy-Hunt, Navid Nabijou, Denis Nesterov, Qaasim Shafi, Bernd Siebert, Terry Song, Calla Tschanz, Ajith Urundolil Kumaran and Jonathan Wise. This work was supported by the Cambridge Trust, DPMMS and Newnham College Scholarship. \\

\noindent\textbf{Terminologies.} The \textit{boundary} of a compactification $\overline{\mathcal{M}}$ of a space $\mathcal{M}$ is the complement $\overline{\mathcal{M}}\backslash \mathcal{M}$. A compactification is \textit{simple normal crossings} when the boundary is a simple normal crossing divisor. The pair $(X,D)$ refers to a smooth projective variety $X$ and a simple normal crossings divisor $D = D_1+ \cdots + D_r$, such that any intersection stratum $\cap_{i \in I} D_i$ is connected. We only consider \emph{fine and saturated} \emph{Zariski} logarithmic structures. The word \emph{cone} will refer to a strongly convex rational polyhedral cone. 

\section{Moduli space of points on expansions} \label{section:logProd}
The main goal of this section is to construct a modular compactification $\logProd$ of $(X \backslash D)^n$, which parametrises points on \emph{grid expansions} of $X$.

\subsection{Preliminaries on tropicalisation and logarithmic modifications} \label{subsection:logmod}

In this section, we review some techniques used in constructing logarithmic moduli spaces. We refer the reader to \cite[Section 6]{CCUW}, \cite[Section 1]{MR20}, \cite[Section~1]{log_enum}, \cite[Section 4]{functrop} and \cite{Ogus} for standard definitions and results in logarithmic geometry.

\begin{definition}[Tropicalisation of a pair]
    Let $(X,D)$ be a smooth projective variety $X$ equipped with a simple normal crossings divisor $D$ with irreducible components $D_1, \dots, D_r$ such that any intersection $\cap_{i \in I} D_i$ is connected. Then the \textit{tropicalisation} $\Sigma_X \subset \R_{\geq 0}^r$ is a rational polyhedral cone complex consisting of 1-dimensional cones $e_i$, which are the rays of $\R_{\geq 0}^r$, and $k$-dimensional cones spanned by $e_{i_1},\ldots,e_{i_k}$ whenever the intersection of $D_{i_1},\dots, D_{i_k}$ is non-empty. 
\end{definition}

\begin{definition}[Artin fan of a pair]
   Given a simple normal crossings pair $(X,D)$ with tropicalisation $\Sigma_X \subset \R_{\geq 0}^r$, its \textit{Artin fan} $\A_X$ is defined as the open substack of the toric quotient stack $[\mathbb{A}^r/\mathbb{G}_m^r]$ formed by taking the substack corresponding to the cones in $\R_{\geq 0}^r$ that are present in $\Sigma_X$.
\end{definition}

Sections 2 and 6 of \cite{CCUW} introduced the 2-category of cone stacks and the 2-category of Artin fans, and constructed a functor $a^*$ from the 2-category of cone stacks to the 2-category of Artin fans. Every cone complex is naturally a cone stack.

\begin{theorem}[\cite{CCUW} Theorem 6.11]\label{thm:CCUW}
    The functor $a^*$ is an equivalence of 2-categories.
\end{theorem}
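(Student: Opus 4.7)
The plan is to prove essential surjectivity and full faithfulness of $a^*$ separately, first for single cones and then extending via colimits. The starting point is the cone-by-cone correspondence: to a sharp rational polyhedral cone $\sigma$ with dual monoid $S_\sigma$, the functor assigns the toric Artin stack $\A_\sigma \colonequals [\Spec k[S_\sigma]/\Spec k[S_\sigma^{\mathrm{gp}}]]$. The first step is to verify that this assignment is fully faithful on the full subcategories spanned by single cones. Concretely, morphisms $\sigma \to \tau$ of cones are in bijection with monoid homomorphisms $S_\tau \to S_\sigma$, and the equivariant morphisms $\A_\sigma \to \A_\tau$ are classified by the same data together with a character twist that accounts for the 2-morphisms; checking that 2-morphisms on each side correspond to units in $S_\sigma^{\mathrm{gp}}$ (which for sharp monoids are trivial, matching the rigidity of cone morphisms) is the bookkeeping that makes this an equivalence at the level of cones.

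Next, I would extend to cone stacks by presenting every cone stack $\Sigma$ as a (strict) colimit of its face diagram, i.e.\ as the $2$-colimit in cone stacks of the diagram indexed by its cones with face inclusions as transition maps. The key technical input is that the functor $a^*$, applied cone-by-cone, produces a diagram of toric Artin stacks along open immersions, and these glue to give a well-defined Artin fan. Here one needs to check that $a^*$ sends face inclusions of cones to open immersions of Artin stacks, and that colimits in the $2$-category of Artin fans are computed by étale gluing along the corresponding open substacks, exactly matching the colimit description in cone stacks. This is where full faithfulness is upgraded from cones to cone stacks: a morphism of cone stacks $\Sigma \to \Sigma'$ is the same data as a compatible family of cone morphisms into cones of $\Sigma'$, and the corresponding Artin fan morphism is determined by its restrictions to the charts $\A_\sigma$.

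For essential surjectivity, I would unpack the definition of an Artin fan as a logarithmic algebraic stack which is logarithmically étale over a point and locally isomorphic to some $\A_P$. Given such a stack $\A$, the collection of its toric charts forms a cover and the overlaps are again charts (by the rigidity of toric opens inside toric Artin stacks); reading off the monoids and transition maps produces a cone stack whose image under $a^*$ is equivalent to $\A$. Here the key lemma is that morphisms $\A_P \to \A_Q$ of the associated toric stacks come from monoid homomorphisms $Q \to P$ (up to $2$-isomorphism), which is a statement about the absence of non-trivial automorphisms of the tautological torsor — a direct check using that $\Gm$-torsors on $\A_P$ correspond to elements of $P^{\mathrm{gp}}$.

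The main obstacle will be the coherence of the $2$-categorical gluing: making sure that the assignment $\sigma \mapsto \A_\sigma$ on cones really does assemble into a strict $2$-functor on cone stacks, and that $2$-morphisms and their compositions (the Godement-type coherences) are preserved. This is the delicate part; in \cite{CCUW} it is handled by presenting cone stacks as category objects in cones (rather than just diagrams) and matching these to category objects in toric Artin stacks. Once these coherences are verified, gluing the fully-faithful cone-level correspondence along the cover yields the desired equivalence of $2$-categories.
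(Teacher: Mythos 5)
The paper does not prove this statement: it is quoted verbatim, with attribution, as \cite[Theorem 6.11]{CCUW}, and is used as a black box to pass between subdivisions of cone complexes and modifications of Artin fans. There is therefore no in-paper proof to compare yours against; the relevant comparison is with the argument in \cite{CCUW} itself, and your sketch does follow its broad strategy (rigidity and full faithfulness at the level of single Artin cones, extension to cone stacks presented as groupoid/category objects in cones, essential surjectivity via charts). Two points deserve care. First, your justification of rigidity via ``units in $S_\sigma^{\mathrm{gp}}$'' is off: $S_\sigma^{\mathrm{gp}}$ is a group, so every element is a unit. The correct mechanism is that a $2$-morphism between two toric morphisms $\A_\sigma \to \A_\tau$ is an isomorphism of line bundles carrying monomial sections to monomial sections; since these sections are nonvanishing on the dense torus, such an isomorphism is unique if it exists, so the Hom groupoids are discrete (equivalently, one uses that the sharp monoid $S_\sigma$ has trivial unit group). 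Second, in the essential surjectivity step, the assertion that ``overlaps of charts are again charts'' is not automatic: the fibre product $\A_\sigma \times_{\A} \A_\tau$ of two \'etale charts must be shown to decompose into Artin cones mapping by face maps, which requires classifying \'etale morphisms between Artin cones; this, together with the $2$-categorical coherence you flag at the end, is where the real work in \cite{CCUW} lies. As a blind reconstruction of the cited proof your outline is reasonable, but for the purposes of this paper the statement is an external input, not something to be reproved.
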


There is a canonical morphism $X \to \mathcal{A}_X \hookrightarrow [\AA^r/\Gm^r]$, given by line bundle and section pairs $(L_i, s_i)$ corresponding to the effective Cartier divisor $D = D_1 + \cdots + D_r$. 

Concretely, since $D$ is a simple normal crossings divisor, at each point $p$, there exist local coordinates $x_1, \ldots, x_d$ such that $D$ is of the form $x_{i_1} \ldots x_{i_k} = 0$, and each $x_{i_j}$ can be taken to be the stalk of $s_j$ at $p$. Therefore, there exists an open neighbourhood $U_p$ of $p$ such that $U_p$ is canonically isomorphic to an open neighbourhood of $(\underline{0}, \underline{1}) \in \AA^k \times \Gm^{d-k}$. We call $U_p$ a \emph{toric chart} at $p$. This gives a map \[U_p \xrightarrow{\text{\'etale}} \AA^k \times \Gm^{d-k} \to [\AA^k \times \Gm^{d-k}/\Gm^k] \cong [\AA^k/\Gm^k] \hookrightarrow \mathcal{A}_X, \] where the last map corresponds via Theorem~\ref{thm:CCUW} to a face inclusion $\sigma \hookrightarrow \Sigma_X$, where $\sigma$ is the cone corresponding to the intersection $D_{i_1} \cap \dots \cap D_{i_k}$ containing $p$. We note that this map glues to a global map $X \to \A_X$, and we note that it is flat.

In the language of cone complexes, this map assigns the strata of $X$ to the corresponding cones. Here, a stratum of $X$ is of the form \[D_I^\circ \colonequals \{x \in X \;|\; x \in D_i \text{ if and only if } i \in I\}.\]

\begin{definition} 
A (conical) \emph{subdivision} $\widetilde{\Sigma}$ of a cone complex $\Sigma$ is a morphism $\widetilde{\Sigma} \to \Sigma$ that is bijective on the supports, such that the lattice points of each cone $\tau \in \Tilde{\Sigma}$ are exactly the intersection of the lattice points of $\Sigma$ with the image of $\tau$.
\end{definition}

\noindent \textbf{Functoriality.} Given two simple normal crossings pairs $(X,D)$ and $(Y,E)$ and a map of pairs $f \colon (X,D) \to (Y,E)$, satisfying $f^{-1}(E) \subset D$, then functoriality holds for tropicalisations and Artin fans: there are induced maps $\Sigma_X \to \Sigma_Y$ and $\A_X \to A_Y$. 

Functoriality holds for maps between logarithmic schemes $X \to Y$ when $X$ is logarithmically smooth of characteristic zero \cite[Proposition~2.8]{decomp}, and when $X, Y$ are \emph{atomic}, i.e. admits a global toric chart \cite[Lemma~A.18]{lifting_rational}, but does not hold in general \cite{skeletons-fans}.

\noindent \textbf{Logarithmic modification.} A subdivision $\widetilde{\Sigma_X} \to \Sigma_X$ corresponds to a modification of Artin fans $\widetilde{\A_X} \to \A_X$ via Theorem~\ref{thm:CCUW}, and by pulling back the modification along the flat morphism $X \to \A_X$ we obtain a modification $\widetilde{X} \to X$ (see \cite[Section~1.4]{log_enum}). 
\begin{equation*}
	\begin{tikzcd}
		\widetilde{X} \arrow[r] \arrow[d] \arrow[dr, phantom, "\square"] & \widetilde{\A_X} \arrow[d]\\
		X \arrow[r] & \A_X
	\end{tikzcd} 
\end{equation*}

More concretely, the subdivision of cone complexes $\widetilde{\Sigma_X} \to \Sigma_X$ induces toric modifications of toric charts of $X$ of the form $\AA^k \times \Gm^{d-k}$, and one can check that these local modifications glue to a global modification of $X$, as the subdivision of cone complexes is compatible with face morphisms of cones.

\subsection{Tropicalisation map and transversality} \label{section:tropicalisation}

In this subsection, we give a description of the boundary points of $\logProd$ in terms of tropical geometry. When a family of $n$ points in $X$ approaches $D$, tropical geometry suggests the following roadmap: 

\begin{enumerate} 
    \item Suppose the Cartier divisor $D = D_1 + \dots + D_r$ is locally described by functions $f_1, \dots, f_r$. Let $\Spec R$ be a discrete valuation ring with valuation $\val$. For a one-parameter family $p\colon \Spec R \to X^n$, given by $(p_1, p_2, \dots, p_n)$, define the \textit{$i$th tropicalisation map} as:
    \begin{align*}
        \trop_i\colon  \;X(R) &\to \Sigma_X \subseteq \rthOrthant \\
        p_i &\mapsto (\val(p_i^*f_1),\dots,\val(p_i^*f_r)).
    \end{align*}
    This map records the order of vanishing of the functions $f_j$ along $p_i$. Taken for all $i$, we have a \textit{tropicalisation map}, written as $\trop\colon X^n(R) \to \Sigma_X^n$. The image $\trop(p)\in \Sigma_X^n$ describes $n$ points on $\Sigma_X$. 
    \item We seek a polyhedral subdivision $\Sigma'_X$ of $\Sigma_X$ such that the $n$ points are vertices on $\Sigma'_X$.
    \item The polyhedral subdivision $\Sigma'_X$ corresponds to an \emph{expansion} of $X$ (defined in Definition~\ref{def:grid_exp}), where a \textit{transversality} condition is satisfied -- the strict transform of $p$ intersects the expanded $X$ in the smooth locus. The new limit of $p$ on the expansion will be a typical element in our moduli problem, which will be described in Section~\ref{section:logProd_moduli}. \label{transversality}
\end{enumerate}

\begin{figure}[!htb]
	\centering
	\def\svgwidth{.5\textwidth}
	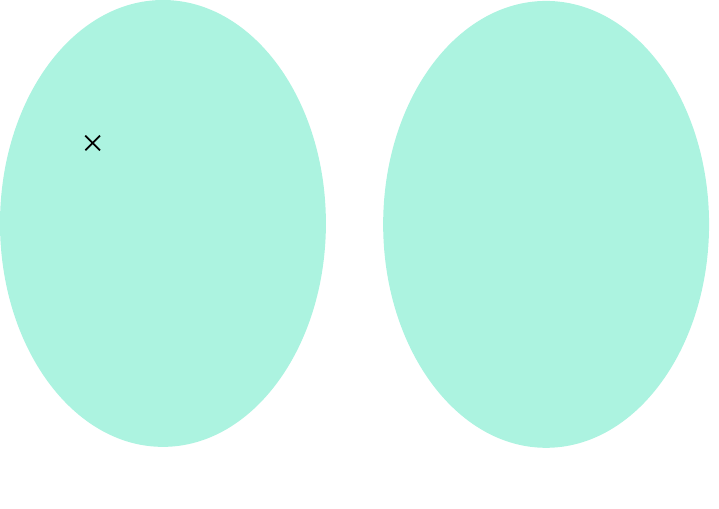
	\caption{A family of 2 points approaching the smooth locus of an expansion.}
	\label{fig:gridexp}
\end{figure}

In subsequent subsections, we globalise the above construction to form a moduli space. The \emph{transversality} mentioned in Step~(\ref{transversality}) is due to a result of Tevelev \cite{tevelev} for toric varieties $X$ and Ulirsch \cite[Theorem~1.2]{ulirsch} for general snc pairs $(X,D)$. 

This can also be understood in terms of blowing up in the special fibre of the trivial family over $\Spec k[[t]]$: if the power series $p_i^*f_j$ has order of vanishing $a$, then blowing up the family in suitable ideals and taking the strict transform of $p_i$ factors out the $t^a$ term, resulting in a non-vanishing power series.

\subsection{Marked grid subdivision of a cone complex} \label{section:gridsub} We proceed with the above plan.

\begin{definition}[Marked polyhedral subdivisions of $\Rgeq$]
    An $n$-marked polyhedral subdivision $(\cal{P}, m)$ of $\Rgeq$ is a polyhedral subdivision $\mathcal{P}$ with a function $m\colon  \oneton \to V(\cal{P})$, where $V(\cal{P})$ is the set of vertices in $\cal{P}$, and $m$ is surjective away from the origin in $V(\cal{P})$.
\end{definition}

\begin{definition}[Marked grid subdivisions of $\S_X$]
    A \emph{grid subdivision} $\cal{P}$ of $\Sigma_X$ is the intersection of $\Sigma_X$ with a product of polyhedral complexes $\{\cal{P}_i\}_{1\leq i \leq r}$, \[
    \cal{P} = \Sigma_X \cap \prod_{1\leq i \leq r} \cal{P}_i,
    \] where each $\cal{P}_i \to \S_{(i)}$ is a polyhedral subdivision of the ray $\S_{(i)}\cong \Rgeq$ with marking function $m_i\colon \oneton \to V(\cal{P}_i)$. 
    
    An \emph{$n$-marked grid subdivision} $(\cal{P}, m)$ of $\S_X$ is a grid subdivision $\mathcal{P}$ of $\S_X$ with a function $m \colon \oneton \to V(\cal{P})$, where $V(\cal{P})$ is the set of vertices in $\cal{P}$. The marking function $m$ must satisfy $\mathrm{pr}_i\circ m = m_i$ for all $i = 1,2, \dots, m$, where $\mathrm{pr}_i$ is the projection map from $\S_X$ to the $i$th ray $\S_{(i)}$ of $\S_X$.
\end{definition}

\begin{figure}[!htb]
	\centering
	\begin{minipage}{.35\textwidth}
		\centering
		\def\svgwidth{.9\textwidth}
\begingroup%
  \makeatletter%
  \providecommand\color[2][]{%
    \errmessage{(Inkscape) Color is used for the text in Inkscape, but the package 'color.sty' is not loaded}%
    \renewcommand\color[2][]{}%
  }%
  \providecommand\transparent[1]{%
    \errmessage{(Inkscape) Transparency is used (non-zero) for the text in Inkscape, but the package 'transparent.sty' is not loaded}%
    \renewcommand\transparent[1]{}%
  }%
  \providecommand\rotatebox[2]{#2}%
  \newcommand*\fsize{\dimexpr\f@size pt\relax}%
  \newcommand*\lineheight[1]{\fontsize{\fsize}{#1\fsize}\selectfont}%
  \ifx\svgwidth\undefined%
    \setlength{\unitlength}{266.62923017bp}%
    \ifx\svgscale\undefined%
      \relax%
    \else%
      \setlength{\unitlength}{\unitlength * \real{\svgscale}}%
    \fi%
  \else%
    \setlength{\unitlength}{\svgwidth}%
  \fi%
  \global\let\svgwidth\undefined%
  \global\let\svgscale\undefined%
  \makeatother%
  \begin{picture}(1,0.88681055)%
    \lineheight{1}%
    \setlength\tabcolsep{0pt}%
    \put(0,0){\includegraphics[width=\unitlength,page=1]{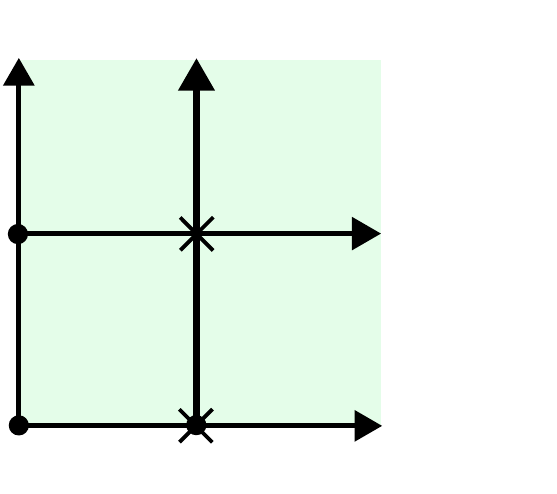}}%
    \put(0.37815607,0.16129614){\color[rgb]{0,0,0}\makebox(0,0)[lt]{\lineheight{1.25}\smash{\begin{tabular}[t]{l}$u_3$\end{tabular}}}}%
    \put(0.35977134,0.52359003){\color[rgb]{0,0,0}\makebox(0,0)[lt]{\lineheight{1.25}\smash{\begin{tabular}[t]{l}$u_1, u_2$\end{tabular}}}}%
    \put(0.70746905,0.09271098){\color[rgb]{0,0,0}\makebox(0,0)[lt]{\lineheight{1.25}\smash{\begin{tabular}[t]{l}$D_1$\end{tabular}}}}%
    \put(-0.00481101,0.81591939){\color[rgb]{0,0,0}\makebox(0,0)[lt]{\lineheight{1.25}\smash{\begin{tabular}[t]{l}$D_2$\end{tabular}}}}%
    \put(0.0029328,0.01775534){\color[rgb]{0,0,0}\makebox(0,0)[lt]{\lineheight{1.25}\smash{\begin{tabular}[t]{l}$X\backslash D$\end{tabular}}}}%
  \end{picture}%
\endgroup%

	\end{minipage}%
	\begin{minipage}{0.65\textwidth}
		\caption{An $n$-marked grid subdivision.}
		\label{fig:gridsub}
	\end{minipage}
\end{figure}

Any tuple of $n$ points on $\S_X$ canonically defines an $n$-marked grid subdivision. The projection to each ray $\mathrm{pr}_i\colon  \S_X \to \S_{(i)}$ sends $n$ points on $\S_X$ to points on $\S_{(i)}$, and this determines a canonical polyhedral subdivision $\cal{P}_i$ of the ray $\S_{(i)}$. We define the associated grid subdivision to be \[\cal{P} = \Sigma_X \cap\prod_{1\leq i \leq n} \cal{P}_i,\] and the $n$ points on $\S_X$ are vertices of $\cal{P}$ and defines an $n$-marking $m$ on $\cal{P}$. We call this assignment $b$.

\begin{lemma} \label{lemma:moduli_trop_points}
    The function $b$ from the set $\S_X^n$ of $n$-tuples of points on $\S_X$ to the set of $n$-marked grid subdivisions of $\S_X$ is a bijection.
\end{lemma}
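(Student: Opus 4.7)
The plan is to exhibit an explicit two-sided inverse to $b$. Given a marked grid subdivision $(\cal{P}, m)$ of $\S_X$, define $b^{-1}(\cal{P},m) \colonequals (m(1), m(2), \ldots, m(n)) \in \S_X^n$, viewing each vertex $m(j) \in V(\cal{P})$ as a point of $\S_X$. This is well-defined because $V(\cal{P}) \subset |\cal{P}| = |\S_X|$.

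Injectivity of $b$ is immediate: for $(p_1, \ldots, p_n) \in \S_X^n$, the associated marking $m$ sends $j$ to $p_j$, so the $n$-tuple can be recovered by evaluating $m$. The content lies in verifying surjectivity, i.e.\ that $b\circ b^{-1} = \mathrm{id}$. First I would record the elementary observation that a polyhedral subdivision of $\Rgeq$ is completely determined by the (finite) set of its vertices: the maximal cells are exactly the closed intervals between consecutive vertices, together with the unbounded ray beyond the largest vertex. Since each marking $m_i\colon \oneton \to V(\cal{P}_i)$ is surjective onto $V(\cal{P}_i)\setminus\{0\}$, the non-origin vertices of $\cal{P}_i$ coincide with the image of $m_i$, and hence $\cal{P}_i$ is determined by $m_i$.

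Starting from $(\cal{P},m)$ with $\cal{P} = \prod_i \cal{P}_i$, applying $b$ to $(m(1), \ldots, m(n))$ projects to the $i$-th ray to produce the points $\mathrm{pr}_i(m(1)), \ldots, \mathrm{pr}_i(m(n))$, which by the compatibility $\mathrm{pr}_i \circ m = m_i$ are exactly $m_i(1), \ldots, m_i(n)$. By the previous paragraph, the subdivision of $\S_{(i)}$ they induce is precisely $\cal{P}_i$, so the product subdivision reconstructed by $b$ is $\cal{P}$, and the reconstructed marking is $m$ by construction. Conversely, $b^{-1}\circ b = \mathrm{id}$ is tautological from the definitions.

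The only genuine subtlety is checking that a marked grid subdivision of $\S_X$ is determined factor-by-factor, which reduces to the one-dimensional statement about $\Rgeq$; this is the main (but elementary) obstacle, and once handled the rest is bookkeeping with the projections $\mathrm{pr}_i$.
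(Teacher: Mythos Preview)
Your proof is correct and follows the same approach as the paper: both define the inverse by sending $(\cal{P},m)$ to $(m(1),\ldots,m(n))$. The paper's argument is terser (it simply declares the bijectivity ``immediate''), while you spell out the key point that $m_i$ being surjective onto $V(\cal{P}_i)\setminus\{0\}$ means the factor subdivisions $\cal{P}_i$ are recoverable from the marking.
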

\begin{proof}
    We associate an $n$-marked grid subdivision $(\cal{P},m)$ of $\S_X$ with $(u_1,u_2,\dots,u_n)\in \Sigma_X^n$ where $u_i = m(i)$. It is immediate that $b$ is bijective.
\end{proof}

\begin{definition}[Marked grid expansions] \label{def:grid_exp}
	Given an $n$-marked grid subdivision $\cal{P} \to \S_X$, the cone over $\cal{P} \times \{1\}$ defines a subdivision of $\S_X \times \Rgeq$. The \emph{$n$-marked grid expansion} $X_{\cal{P}}$ over $\Spec k$ is the special fibre of the corresponding modification of $
	X \times \AA^1$.
\end{definition}

Geometrically, a grid expansion is a (reducible) variety with irreducible components $X$ and chains of $(\PP^1)^k$-bundles over intersections $D_{i_1}\cap D_{i_2} \cap \dots \cap D_{i_k}$. See Figures~\ref{fig:gridsub-fam} and \ref{fig:marked-gridexp} for a family of $n$-marked grid expansions corresponding to the grid subdivision in Figure~\ref{fig:gridsub}.

\begin{figure}[!htb]
	\centering
	\begin{minipage}{.5\textwidth}
		\centering
		\def\svgwidth{\textwidth}
\begingroup%
  \makeatletter%
  \providecommand\color[2][]{%
    \errmessage{(Inkscape) Color is used for the text in Inkscape, but the package 'color.sty' is not loaded}%
    \renewcommand\color[2][]{}%
  }%
  \providecommand\transparent[1]{%
    \errmessage{(Inkscape) Transparency is used (non-zero) for the text in Inkscape, but the package 'transparent.sty' is not loaded}%
    \renewcommand\transparent[1]{}%
  }%
  \providecommand\rotatebox[2]{#2}%
  \newcommand*\fsize{\dimexpr\f@size pt\relax}%
  \newcommand*\lineheight[1]{\fontsize{\fsize}{#1\fsize}\selectfont}%
  \ifx\svgwidth\undefined%
    \setlength{\unitlength}{476.49210292bp}%
    \ifx\svgscale\undefined%
      \relax%
    \else%
      \setlength{\unitlength}{\unitlength * \real{\svgscale}}%
    \fi%
  \else%
    \setlength{\unitlength}{\svgwidth}%
  \fi%
  \global\let\svgwidth\undefined%
  \global\let\svgscale\undefined%
  \makeatother%
  \begin{picture}(1,0.46703947)%
    \lineheight{1}%
    \setlength\tabcolsep{0pt}%
    \put(0,0){\includegraphics[width=\unitlength,page=1]{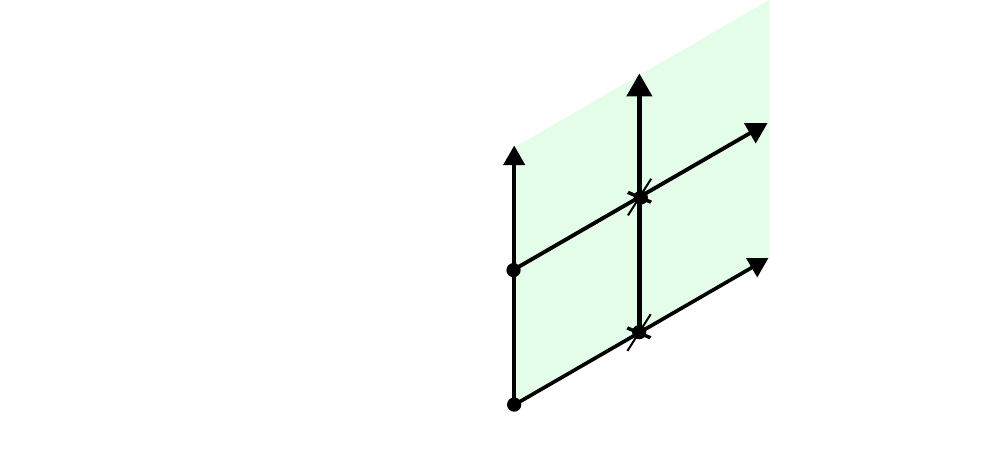}}%
    \put(0.64803745,0.15925533){\color[rgb]{0,0,0}\makebox(0,0)[lt]{\lineheight{1.25}\smash{\begin{tabular}[t]{l}$u_3$\end{tabular}}}}%
    \put(0.57389088,0.29297698){\color[rgb]{0,0,0}\makebox(0,0)[lt]{\lineheight{1.25}\smash{\begin{tabular}[t]{l}$u_1, u_2$\end{tabular}}}}%
    \put(0,0){\includegraphics[width=\unitlength,page=2]{gridsub-fam.pdf}}%
    \put(0.51115019,0.00365403){\color[rgb]{0,0,0}\makebox(0,0)[lt]{\lineheight{0}\smash{\begin{tabular}[t]{l}$1$\end{tabular}}}}%
    \put(0.05754877,0.00396756){\color[rgb]{0,0,0}\makebox(0,0)[lt]{\lineheight{0}\smash{\begin{tabular}[t]{l}$0$\end{tabular}}}}%
    \put(0.92232159,0.04863481){\color[rgb]{0,0,0}\makebox(0,0)[lt]{\lineheight{0}\smash{\begin{tabular}[t]{l}$\Rgeq$\end{tabular}}}}%
    \put(0.17437037,0.32486355){\color[rgb]{0,0,0}\makebox(0,0)[lt]{\lineheight{0}\smash{\begin{tabular}[t]{l}$\Sigma_X$\end{tabular}}}}%
  \end{picture}%
\endgroup%

		\caption{A family of $n$-marked grid subdivisions.}
		\label{fig:gridsub-fam}
	\end{minipage}\hfill
	\begin{minipage}{0.5\textwidth}
		\centering
		\def\svgwidth{.8\textwidth}
		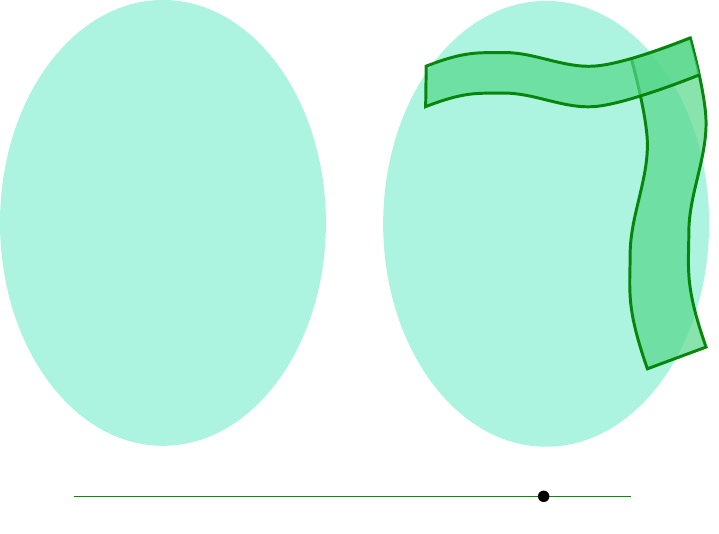
		\caption{A family of $n$-marked grid expansions over $\AA^1$.}
		\label{fig:marked-gridexp}
	\end{minipage}
\end{figure}

\begin{remark} \label{rem:fam-exp}
	There is a more general notion of \emph{expansions} as defined in \cite[Definition 2.2.3]{MR20}. In order to define families of grid expansions, the usual convention is to define families of grid expansions over logarithmic schemes by a logarithmic analogue of Remark~\ref{rem:gridsub-fam}. It will follow that the Artin fan $\gridexp$ of the tropical moduli space of subdivisions $\tropmod$ is the \emph{logarithmic stack} of $n$-marked grid expansions, which turns out to be representable by an algebraic stack $\gridexp$ over schemes.
	
	To date, there is no intrinsic definition of an expansion over a scheme $S$. Instead, we will take the Artin fan $\gridexp$ of the tropical moduli space of subdivisions $\tropmod$, equip it with a universal family $\X$ of grid expansions, and define families of grid expansions over a scheme $S$ as pullbacks of $\X \to \gridexp$ along some morphism $S \to \gridexp$. This definition is made possible by the fact that $\gridexp$ is an algebraic stack, in contrast to the logarithmic stack of expansions in \cite{MR20} that is often not representable by a stack over schemes. 
\end{remark}

\subsubsection{Combinatorial types}

Given a subdivision $\cal{P} \to \Sigma_X$, we consider its \emph{face poset}, denoted $\cal{F}_{\cal{P}}$, and the induced order preserving map $\cal{F}_{\cal{P}} \to \cal{F}_{\Sigma_X}$. The marking function $m$ on $V(\cal{P})$ induces a marking function \[m\colon \oneton \to \cal{F}_{\cal{P}}.\] 
\begin{definition}
	Given an $n$-marked grid subdivision $(\cal{P}, m)$ of $\Sigma_X$, its \emph{combinatorial type} is the pair $(\cal{F}_{\cal{P}} \to \cal{F}_{\Sigma_X}, m)$.
\end{definition}
\begin{remark}\label{rem:combtype}
	Consider an $n$-tuple $(u_1, u_2, \ldots, u_n)$ of points in $\S_X$. Let $u_i^{(1)}, u_i^{(2)}, \ldots, u_i^{(r)}$ be the coordinates of $u_i$, and let $T_j$ denote the total pre-order of $0, u_1^{(j)}, u_2^{(j)}, \ldots, u_n^{(j)}$. Then the combinatorial type of $(u_1, u_2, \ldots, u_n)$ is equivalent to the data $(T_1, T_2, \ldots, T_r)$.
\end{remark}

\subsection{Tropical moduli problem and semistable reduction} \label{section:tropmoduli_grid} For compactness of notation, replace $\Sigma_X$ by $\Sigma$. As suggested by the tropicalisation map, we consider the moduli problem of $n$-marked grid subdivisions of $\S$, and construct the tropical moduli space $\tropmod$ of $n$-marked grid subdivisions.

We have the following diagram of cone complexes, where $s_1, \ldots, s_n$ are the sections:
\begin{equation}\label{cd:logprodnaive}
	\begin{tikzcd}
		\Sigma \times \Sigma^{n} \arrow[dd, "p"']     \arrow[r]        & \S          \\
		\\
		\Sigma^n \arrow[uu, "{s_1, \ldots, s_n}"', bend right] 
	\end{tikzcd}
\end{equation}
The map $p$ is induced by the projection onto the second factor, and $s_i$ is the $i$th section induced by the map $\mathrm{pr}_i \times \mathrm{id}$, where $\mathrm{pr}_i\colon  \Sigma^n \to \Sigma$ is the projection onto the $i$th factor.

\begin{construction}\label{prop:tropmoduli_grid} 
    We construct a diagram of cone complexes
\begin{equation}\label{cd:subdivision}
    \begin{tikzcd}
        \tropfam \arrow[dd, "p"'] \arrow[r] & \S                       \\
        \\
        \tropmod \arrow[uu, "{s_1, \ldots, s_n}"', bend right]
    \end{tikzcd}
\end{equation} 
where each term in the diagram is obtained by a subdivision of the corresponding term in Diagram (\ref{cd:logprodnaive}). This diagram satisfies:
\begin{enumerate}
    \item \textbf{\emph{Transversality.}} The image of each $s_i$ is a union of cones in $\tropfam$. 
    \item \textbf{\emph{Combinatorial flatness.}} Every cone of $\tropfam$ surjects under $p$ onto a cone of $\tropmod$.
    \item \textbf{\emph{Reducedness.}} The image under $p$ of the lattice of every cone in $\tropfam$ is equal to the lattice of the image cone in $\tropmod$.
    
\end{enumerate}
\end{construction}
\begin{proof}[Steps]
    We carry out weak semistable reduction following \cite[Sections 4 \& 5]{semistable}. 
    \begin{enumerate}[wide, labelindent=0pt]
        \item \textbf{Subdivision of $\S \times \Sn$ for transversality.} By Lemma~\ref{lemma:moduli_trop_points}, we adopt the perspective that $\Sn$ is the underlying set of $n$-tuples of points on $\S$, or equivalently, the set of $n$-marked grid subdivisions. Similarly, $\S\times \Sn$ is the set of $n+1$-tuples of points on $\S$, or equivalently, the set of $n+1$-marked grid subdivisions. 
		The image of $s_i$ is the set 
        \begin{align*}
            s_i(\Sn)  &=  \{(x, u_1, u_2, \ldots, u_n)\in \S \times \Sn \; | \; x = u_i  \}.
        \end{align*} 
        The goal is to subdivide $\S\times \Sn$ such that $s_i(\Sn)$ is a union of cones in the subdivision.

		By Remark~\ref{rem:combtype}, each combinatorial type $\tau^+ = (\cal{F}^+ \to \F_{\Sigma}, m^+)$ is equivalent to a collection of total pre-orders $T_j^+$ of the $j$th-coordinates \[0, x^{(j)}, u_1^{(j)}, u_2^{(j)}, \ldots, u_n^{(j)}.\] These total pre-orders $(T_1^+, T_2^+, \ldots, T_r^+)$ give rise to an intersection of half-spaces, and due to the linear independence of the coordinates on $\Sigma$, this intersection of half-spaces is a rational polyhedral cone, which we denote by $\sigma_{\tau^+}$. The lattice of $\sigma_{\tau^+}$ is the groupification of the intersection of the lattice of $\S \times \Sn$, namely $(\Z^r)^{n+1}$, with the image of $\sigma_{\tau^+}$. The collection of cones $\sigma_{\tau^+}$ for all combinatorial types $\tau^+$ of elements in $\S\times \Sn$ form a cone complex $\tropfam$, which is a subdivision of $\S\times \Sn$ as $\tropfam$ has the same underlying support and lattice as $\S\times \Sn$. 

        We note that $s_i(\Sn)$ is the union of cones $\sigma_{\tau^+}$ with combinatorial types $\tau^+ = (\cal{F}^+ \to \F_{\Sigma}, m^+)$ such that the equalities $x^{(j)} = u_i^{(j)}$ holds for all $j$.

		\smallskip
        \item \textbf{Subdivision of $\Sn$ to make each $s_i$ a morphism of cone complexes.} 

        As described in the subdivision of $\S \times \Sn$, we subdivide $\Sn$ by considering the cones $\mu_{\tau}$ indexed by combinatorial types $\tau = (\cal{F}\to \F_{\Sigma},m)$, determined by total pre-orders of coordinates $(T_1, T_2, \ldots, T_r)$. The lattice of $\mu_{\tau}$ is the groupification of the intersection of the lattice of $\Sn$, namely $(\Z^r)^{n}$, with the image of $\mu_{\tau}$.
        
        The collection of cones $\mu_{\tau}$ for all combinatorial types $\tau$ of elements in $\Sn$ form a cone complex $\tropmod$, which is a subdivision of $\Sn$, as $\tropmod$ has the same underlying support and lattice as $\Sn$. 
        
        To check $s_i$ is a morphism of cone complexes, for any given combinatorial type $\tau = (\cal{F}\to \F_{\Sigma},m)$, one needs to check $s_i$ sends the cone $\mu_{\tau}\subset \tropmod$ to some cone in $\tropfam$. 
        Fixing the combinatorial type $\tau= (\cal{F}\to \F_{\Sigma},m)$ of an $n$-marked grid subdivision $\cal{P}$, with associated total pre-orders $(T_1, T_2, \ldots, T_r)$, let $\tau_i$ be the combinatorial type of an $n+1$-marked grid subdivision given by $(\cal{F}\to \F_{\Sigma},m_i)$, where the marking function is \[m_i \colon \{0, 1, \ldots, n\} \to V(\cal{P}), \; \; m_i(0) = m(i), \; m_i(j) = m(j) \text{ for } j \neq i.
        \]This combinatorial type is precisely determined by the total pre-orders $(T_1^+, T_2^+, \ldots, T_r^+)$ where for each $j$, $T_j^+$ is the total pre-order $T_j$ with the additional equality $x^{(j)} = u_i^{(j)}$.  

		So, $s_i(\mu_{\tau})$ is equal to the cone $\sigma_{\tau_i}$, and $s_i$ is a morphism of cone complexes.
        
        \smallskip
        \item \textbf{Combinatorial reducedness.} 
        Recall that the underlying lattice of $\tropmod$ is $(\Z^r)^n \subset (\R^r)^n$. Namely, a point $(u_1, u_2, \ldots, u_n)$ in $\tropmod$ is integral if and only if each $u_i\in \Rgeq^r$ has integral coordinates. We take a similar lattice $(\Z^r)^{n+1}$ on the cone complex $\tropfam$. 
        
        We first show that $p$ is combinatorially reduced. For an arbitrary cone $\sigma$ in $\tropfam$ and its image cone $\mu = p(\sigma)$ in $\tropmod$, above each integral point $(u_1, u_2, \ldots, u_n)$ in $\mu$, the preimage \[p^{-1}(u_1, u_2, \ldots, u_n) \cap \sigma\] is a polyhedron $P$ in the polyhedral complex $p^{-1}(u_1, u_2, \ldots, u_n)$. 
        
         Each vertex $v$ of $P$ is the intersection of $r$ coordinate hyperplanes, where each coordinate hyperplane is of the form $x_j=0$ or of the form $x_j = u_i^{(j)}$, and since $(u_1, u_2, \ldots, u_n)$ is integral, it follows that $v$ is also integral. This proves that the image of the lattice of $\sigma$ is the lattice of $\mu$.
        
        \item \textbf{Combinatorial flatness.}
        It remains to show that every cone of $\tropfam$ surjects under $p$ onto a cone of $\tropmod$. 
        
        Given a combinatorial type $\tau^+ = (\F^+ \to \F_{\S}, m^+)$ in $\tropfam$ described by total pre-orders $(T_1^+, T_2^+, \ldots, T_r^+)$, and its associated cone $\sigma_{\tau^+}$, an element of $\sigma_{\tau^+}$ is of the form $(\cal{P}^+, m^+)$ where $\F_{\cal{P}^+} = \F^+$. Identifying $(\cal{P}^+, m^+)$ with $(x,u_1,u_2,\ldots,u_n)$, where $x = m^+(0)$ and $u_j = m^+(j)$, we recall \[p(x,u_1,u_2,\ldots,u_n) = (u_1, u_2, \ldots, u_n) \] 
        where $(u_1, u_2, \ldots, u_n)$ is identified with $(\cal{P}, m)$. Its combinatorial type $\tau := (\F \to \F_{\S}, m)$ is then given by the total pre-orders $(T_1, T_2, \ldots, T_r)$, where each $T_j$ is obtained by forgetting the coordinate $x^{(j)}$ in $T_j^+$. It is straightforward to verify that $p(\sigma_{\tau^+}) = \mu_{\tau}$.
    \end{enumerate}
\end{proof}

For simplicity of notation, we will write $\tau$ to denote a combinatorial type and its cone.

\begin{observation} \label{obs:cone}
	Fix a combinatorial type $\tau$, and let \[
	\cal{P} = \Sigma_X \cap \prod_{1\leq i \leq r} \cal{P}_i,
	\] be any $n$-marked grid subdivision of type $\tau$. Each $\cal{P}_i$ corresponds to the ray $\Sigma_{(i)}$ subdivided by $n$ not necessarily distinct vertices into bounded line segments and an unbounded ray. The relative positions of the vertices correspond to the total pre-order $T_i$. The (non-zero) lengths $l_{i,j}$ of all the bounded line segments of $\cal{P}_i$, taken for all $i$, are linearly independent and exactly determine the subdivision $\cal{P}$ of the given type $\tau$. The cone corresponding to $\tau$ is the cone $\cone(l_{i,j})_{i,j}$ freely generated by the length parameters, and setting a length parameter to zero corresponds to generalisation of combinatorial types. Since the generators form a subset of a $\Z$-basis of the lattice $(\Z^r)^n$, we conclude that the cone $\tau$ is smooth \cite{cls}.
\end{observation}

\begin{figure}[!htb]
	\centering
	\def\svgwidth{.3\textwidth}
\begingroup%
  \makeatletter%
  \providecommand\color[2][]{%
    \errmessage{(Inkscape) Color is used for the text in Inkscape, but the package 'color.sty' is not loaded}%
    \renewcommand\color[2][]{}%
  }%
  \providecommand\transparent[1]{%
    \errmessage{(Inkscape) Transparency is used (non-zero) for the text in Inkscape, but the package 'transparent.sty' is not loaded}%
    \renewcommand\transparent[1]{}%
  }%
  \providecommand\rotatebox[2]{#2}%
  \newcommand*\fsize{\dimexpr\f@size pt\relax}%
  \newcommand*\lineheight[1]{\fontsize{\fsize}{#1\fsize}\selectfont}%
  \ifx\svgwidth\undefined%
    \setlength{\unitlength}{318.14412275bp}%
    \ifx\svgscale\undefined%
      \relax%
    \else%
      \setlength{\unitlength}{\unitlength * \real{\svgscale}}%
    \fi%
  \else%
    \setlength{\unitlength}{\svgwidth}%
  \fi%
  \global\let\svgwidth\undefined%
  \global\let\svgscale\undefined%
  \makeatother%
  \begin{picture}(1,0.75747239)%
    \lineheight{1}%
    \setlength\tabcolsep{0pt}%
    \put(0,0){\includegraphics[width=\unitlength,page=1]{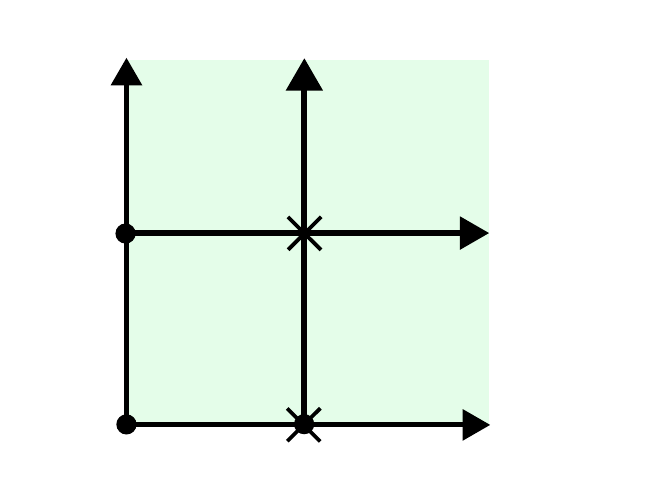}}%
    \put(0.47884691,0.14943554){\color[rgb]{0,0,0}\makebox(0,0)[lt]{\lineheight{1.25}\smash{\begin{tabular}[t]{l}$u_3$\end{tabular}}}}%
    \put(0.46343909,0.45306567){\color[rgb]{0,0,0}\makebox(0,0)[lt]{\lineheight{1.25}\smash{\begin{tabular}[t]{l}$u_1, u_2$\end{tabular}}}}%
    \put(0.7548365,0.09195591){\color[rgb]{0,0,0}\makebox(0,0)[lt]{\lineheight{1.25}\smash{\begin{tabular}[t]{l}$D_1$\end{tabular}}}}%
    \put(0.15789105,0.69806015){\color[rgb]{0,0,0}\makebox(0,0)[lt]{\lineheight{1.25}\smash{\begin{tabular}[t]{l}$D_2$\end{tabular}}}}%
    \put(0.00017563,0.02215535){\color[rgb]{0,0,0}\makebox(0,0)[lt]{\lineheight{1.25}\smash{\begin{tabular}[t]{l}$X\backslash D$\end{tabular}}}}%
    \put(0,0){\includegraphics[width=\unitlength,page=2]{gridsub-cone.pdf}}%
    \put(0.02924219,0.26881323){\color[rgb]{0,0,0}\makebox(0,0)[lt]{\lineheight{0}\smash{\begin{tabular}[t]{l}$l_{2,1}$\end{tabular}}}}%
    \put(0.26173574,0.00749861){\color[rgb]{0,0,0}\makebox(0,0)[lt]{\lineheight{0}\smash{\begin{tabular}[t]{l}$l_{1,1}$\end{tabular}}}}%
  \end{picture}%
\endgroup%

	\caption{Combinatorial type with cone $\Rgeq^2$ spanned by two length parameters.}
	\label{fig:gridsub-cone}
\end{figure}

\begin{proposition} \label{prop:simplicial}
	The cone complexes $\tropmod$ and $\tropfam$ are smooth.
\end{proposition}
\begin{proof}
	We proved the statement for $\tropmod$ in Observation~\ref{obs:cone}, and an analogous argument proves the statement for $\tropfam$.
\end{proof}

\begin{remark}\label{rem:gridsub-fam}
	Let $\mathfrak{S}$ be an arbitrary rational polyhedral cone complex. One can define a \emph{family of $n$-marked grid subdivisions} of $\Sigma$ over $\mathfrak{S}$ as a family with $n$ sections 
	\begin{equation*}
		\begin{tikzcd}
			\widetilde{\Sigma \times \mathfrak{S}} \arrow[r] & \mathfrak{S} \arrow[l, bend left, "{s_1, \ldots, s_n}"],
		\end{tikzcd}
	\end{equation*}
	such that $\widetilde{\Sigma \times \mathfrak{S}}$ is a subdivision of $\Sigma \times \mathfrak{S}$, the family satisfies combinatorial flatness and reducedness (see Proposition~\ref{prop:tropmoduli_grid}), and the preimage over each point on $\mathfrak{S}$ is an $n$-marked grid subdivision whose marking is induced by the $n$ sections. Then by the canonicity of grid subdivisions, one observe that any such family must be pulled back from the tropical family $\tropfam \to \tropmod$ along a morphism $\mathfrak{S} \to \tropmod$. This justifies the implicit claim that $\tropmod$ is the \emph{tropical moduli space} of $n$-marked grid subdivisions.
\end{remark}

\subsection{Stable $n$-pointed grid expansions} \label{section:logProd_moduli}
We formulate the moduli problem for the compactification $\logProd$. Let $\gridexp$ and $\gridexpfam$ denote the Artin fan corresponding to $\tropmod$ and $\tropfam$. 

\begin{proposition} \label{prop:grid-exp}
	The stack $\gridexp$ is equipped with a flat and representable family of grid expansions $\eta\colon \X \to \gridexp$, which is logarithmically smooth and has reduced fibres. 
\end{proposition}
\begin{proof}
	Proposition~\ref{prop:tropmoduli_grid} gives the flat family $\gridexpfam \to \gridexp$ with reduced fibres. The family $\eta\colon \X \to \gridexp$ is obtained by the following pullback. Note that along a strict morphism, the pullback in algebraic stacks coincides with the pullback in fine and saturated logarithmic stacks.

	\begin{equation*}\label{cd:artin_subdivision}
		\begin{tikzcd}
			\mathcal{X} \arrow[d, "\text{strict, flat, smooth}"'] \arrow[r] \pullback{rd} & X \times \gridexp \arrow[r] \arrow[d] \pullback{rd} & X \arrow[d, "\text{strict, flat, smooth}"] \\
			\gridexpfam \arrow[d, "\text{flat}"'] \arrow[r] & \A_X \times \gridexp \arrow[r] & \mathcal{A}_X \\
			\gridexp \arrow[u, "{e_1, e_2,\ldots,e_n}"', bend right] &                             
		\end{tikzcd}
	\end{equation*}
	
	Since $X \to \A_X$ is flat and smooth, so is the base change $\X \to \gridexpfam$. Logarithmic smoothness of $\X \to \gridexp$ is equivalent to smoothness of $\X \to \gridexpfam$, which holds. By Proposition~\ref{prop:tropmoduli_grid} and the flatness criterion for morphisms of toric varieties \cite[Theorem II.4.2]{tsuji}, $\gridexpfam \to \gridexp$ is flat. Hence $\X \to \gridexp$ is flat by composition. We note that the geometric fibres of $\X \to \gridexp$ are grid expansions.
\end{proof}

As described in Remark~\ref{rem:fam-exp}, we let $\gridexp$ be the stack of $n$-marked grid expansions of $(X,D)$ with universal family $\eta \colon \X \to \gridexp$, and we define families of $n$-marked grid expansions below.

\begin{definition}
	A \emph{family of $n$-marked grid expansions} over a scheme $S$ is a flat family $\X_S \to S$ with reduced fibres, such that there exists a morphism $f \colon S \to \gridexp$ and $\X_S$ is the pullback of $\X \to \gridexp$ to $S$ along $f$.
\end{definition}


\begin{definition}\label{cor:gridexp_over_point}
	An \emph{$n$-pointed grid expansion} is $(X_{\cal{P}}, p_1, p_2, \dots p_n)$, where $X_{\cal{P}}$ is an $n$-marked grid expansion induced by some grid subdivision $(\cal{P}, m)$ (see Definition~\ref{def:grid_exp}), and $p_1, p_2, \dots, p_n$ are points on the smooth locus of $X_{\cal{P}}$. It is \emph{stable} when $p_i$ lies on the irreducible component with marking $m(i)$.
\end{definition}

\begin{definition} \label{lem:gridexppullback}
	An \emph{$n$-pointed grid expansion} over a scheme $S$ is the data \[(\cal{X}_S\to S, s_1, s_2, \dots s_n),\] where $\X_S \to S$ is a family of $n$-marked grid expansions pulled back along some morphism $f \colon S\to \gridexp$, and $s_1,s_2, \dots, s_n$ are sections. It is \emph{stable} when the sections $s_i$ satisfy that $e_i \circ f$ and $\hat{\eta}\circ \hat{f} \circ s_i$ (2-) commute. We have the following (2-)commutative diagram:
	\begin{equation}
		\begin{tikzcd}
			\X_S \arrow[d, "f^*\eta"] \arrow[r, "\hat{f}"] \arrow[dr, phantom, "\square" , color=black] & \X \arrow[d, "\eta"] \arrow[r, "\hat{\eta}"] & \gridexpfam \arrow[dl]\\
			S \arrow[u, "{s_i}", bend left] \arrow[r, "f"] & \gridexp \arrow[ur, "{e_i}"', bend right]. &
		\end{tikzcd}
	\end{equation}
	
\end{definition}


In Observation~\ref{obs:stability}, we explain why stability guarantees the absence of non-trivial automorphisms. Before proceeding to define morphisms between $n$-pointed grid expansions, we need the following lemma from \cite[Lemma 2.9]{rubbertori}:

\begin{lemma} \label{lem:2-iso}
	A 2-isomorphism $\lambda$ between two morphisms $f_1\colon  S \to \gridexp$ and $f_2\colon  S \to \gridexp$ induces a unique isomorphism $\aut\colon  f_1^*\X \to f_2^*\X$. The maps fit into the 2-commutative diagram:
	\begin{equation*}
		\begin{tikzcd}
			& f_2^*\mathcal{X} \arrow[rd] \arrow[d, "\eta_2"]& \\                               
			& S \arrow[rd, "f_2", ""{name=U,inner sep=1pt,below}]   & \mathcal{X} \arrow[d] \\
			f_1^*\mathcal{X} \arrow[rru] \arrow[d, "\eta_1"] \arrow[ruu, " \lambda^{\dagger}"]  &           & \mathcal{E}_n(X|D)  \\
			S \arrow[rru, "f_1"', ""{name=D,inner sep=1pt}] \arrow[ruu, "\mathrm{id}"']  \arrow[Rightarrow, from=D, to=U, "\lambda"]&                                                                          &                      
		\end{tikzcd}
	\end{equation*}
\end{lemma}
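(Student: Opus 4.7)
The plan is to invoke the universal property of the 2-fibred product defining $f_i^*\X$. For any scheme $T$, a $T$-point of $f_i^*\X$ is a triple $(g\colon T \to S,\; h\colon T \to \X,\; \beta)$ where $\beta$ is a 2-isomorphism $f_i \circ g \Rightarrow \eta \circ h$. The 2-isomorphism $\lambda$ horizontally whiskered with $g$ yields $\lambda \ast g \colon f_1 \circ g \Rightarrow f_2 \circ g$. Given a $T$-point $(g, h, \beta)$ of $f_1^*\X$, vertically composing the inverse of this whiskering with $\beta$ produces a new coherence 2-cell, hence a $T$-point $(g, h, \beta \circ (\lambda \ast g)^{-1})$ of $f_2^*\X$.

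This assignment is manifestly functorial in $T$, and whiskering with $\lambda^{-1}$ provides a two-sided inverse on $T$-points. By the 2-Yoneda lemma, this equivalence of functors is induced by a unique isomorphism of stacks $\aut\colon f_1^*\X \to f_2^*\X$. The outer 2-commutativity of the stated diagram is then automatic: the projection to $S$ is preserved because the assignment is the identity on the component $g$; the morphism to $\X$ is preserved because it is the identity on $h$; and the 2-cell between $\hat{f}_1$ and the composite $\hat{f}_2 \circ \aut$ is exactly the whiskering of $\lambda$.

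Uniqueness is another Yoneda argument: any other isomorphism $\aut'$ making the diagram 2-commute induces the same natural transformation on $T$-points, since preservation of both the $S$-component and the $\X$-component, combined with the prescribed coherence involving $\lambda$, rigidly fixes the image of every triple $(g, h, \beta)$. Hence $\aut' = \aut$ canonically.

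The only real obstacle is purely bookkeeping — keeping straight the direction of horizontal compositions and whiskering, and checking that the two projections from the 2-fibre product are preserved in a strict rather than merely pseudo sense when needed. There is no geometric input: the statement is a general property of 2-fibred products in the 2-category of algebraic stacks, specialised here to the universal family $\eta\colon \X \to \gridexp$.
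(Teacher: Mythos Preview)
Your argument is correct and is the standard way to establish such a statement: the 2-fibred product $f_i^*\X$ represents the functor of triples $(g,h,\beta)$ as you describe, and whiskering with $\lambda$ gives a natural equivalence between the two functors, which by 2-Yoneda is induced by a unique isomorphism $\aut$. The check that the projections to $S$ and to $\X$ are preserved, and that the coherence 2-cell is the whiskering of $\lambda$, is exactly as you say.

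By way of comparison: the paper does not actually prove this lemma at all --- it simply cites it as \cite[Lemma~2.9]{rubbertori}. So your proposal is not an alternative to the paper's proof but rather a self-contained replacement for an external reference. The content of your argument is presumably close to what appears in that reference, since there is essentially only one way to prove such a statement (the universal property of the 2-pullback). Your observation at the end is apt: there is no geometric input here, and the lemma holds for any representable morphism $\eta$ of algebraic stacks in place of $\X \to \gridexp$.
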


\begin{remark} \label{rem:rubber_action}
	Both $f_1$ and $f_2$ necessarily factor through some $\B T_{\tau}$, where $T_{\tau}$ is the torus associated to the cone $\tau$ of $\tropmod$, and $\B T_{\tau}$ is the classifying stack. The isomorphism $\aut\colon  f_1^*\X \to f_2^*\X$ is known as the \emph{rubber action} of $T_{\tau}$ on the $n$-marked grid expansion $f_1^*\X \cong f_2^*\X$ associated to the combinatorial type $\tau$. We will describe the rubber action in more detail in the following subsection.
\end{remark}

\begin{definition} \label{lem:pullback_mor}
	A \emph{morphism} between stable $n$-pointed grid expansions \[(f_1\colon S_1\to \gridexp, s_{1,1}, s_{1,2}, \dots, s_{1,n}) \to (f_2\colon  S_2 \to \gridexp, s_{2,1}, s_{2,2}, \dots, s_{2,n})\] is the data $(\psi,\lambda)$:
	\begin{enumerate}
		\item A morphism $\psi\colon S_1 \to S_2$ fitting into the 2-commutative diagrams
		\begin{equation*} 
			\begin{tikzcd}
				& f_2^*\mathcal{X} \arrow[rd] \arrow[d]& \\                               
				& S_2 \arrow[rd, "f_2", ""{name=U,inner sep=1pt,below}] \arrow[u, "{s_{2,1}, s_{2,2}, \dots, s_{2,n}}"', bend right]    & \mathcal{X} \arrow[d] \\
				f_1^*\mathcal{X} \arrow[rru] \arrow[d] \arrow[ruu, "\Psi \circ \lambda^{\dagger}"]  &           & \mathcal{E}_n(X|D)  \\
				S_1 \arrow[rru, "f_1"', ""{name=D,inner sep=1pt}] \arrow[ruu, "\psi"'] \arrow[u, "{s_{1,1}, s_{1,2}, \dots, s_{1,n}}", bend left] \arrow[Rightarrow, from=D, to=U, "\lambda"]&                                                                          &                     
			\end{tikzcd}
		\end{equation*}
		\item 2-isomorphisms $\lambda\colon  f_1 \Rightarrow f_2 \circ \psi$ and equalities \[(\Psi \circ \lambda^\dagger) \circ s_{1,i} = s_{2,i} \circ \psi,\] where $\Psi\colon \psi^*f_2^*\X \to f_2^*\X$ is the canonical morphism.
		
	\end{enumerate}
	
	A morphism $(\psi, \lambda)$ is an \emph{isomorphism} if $\psi$ is an isomorphism.
\end{definition}

\subsection{Construction of moduli space $\logProd$} 
We construct the compactification $\logProd$ via the tropical moduli space $\tropmod$. We prove that $\logProd$ represents the moduli problem of stable $n$-pointed grid expansions and has desired properties.

The subdivision $\tropmod \to \S^n$ induces a modification of Artin fans $\psi\colon \gridexp \to \A_X^n$, hence a logarithmic modification of $X^n$ (see Section~\ref{subsection:logmod}):

\begin{equation}\label{cd:logprod}
    \begin{tikzcd}
	\logProd \arrow[d] \arrow[r, "\phi"] \arrow[dr, phantom, "\square", color=black] & {\gridexp}\arrow[d, "\psi"] \\
	{X^n} \arrow[r, "\text{flat}"] & {\mathcal{A}_X^n}.
    \end{tikzcd}
\end{equation}

We note that $\logProd$ has Artin fan $\gridexp$, and is representable by a logarithmic scheme as it is a modification of $X^n$.


These pullbacks fit together in a bigger 2-commutative diagram below, and we pullback $\eta\colon \X \to \gridexp$ along $\phi\colon \logProd \to \gridexp$. As $\eta$ is representable, we obtain a flat and logarithmically smooth universal family $\pi\colon  \fam \to \logProd$. In addition, for each $i$, the morphism \[\logProd \xrightarrow{e_i \circ \phi} \gridexpfam \to \A_X\] 2-commutes with \[\logProd \to X^n \xrightarrow{\mathrm{pr_i}} X \to \A_X,\] and this follows from the construction of $\gridexpfam$ and $\gridexp$ together with the sections $e_i$. Therefore, we obtain morphisms $S_i\colon  \logProd \to \X$. The fact that $e_i$ are sections of $\mathbf{p}$ guarantees that the morphisms $S_i$ induce universal sections $\mathbf{s}_i\colon  \logProd \to \fam$ such that $e_i \circ \phi$ and $\hat{\eta}\circ \hat{\phi}\circ \mathbf{s}_i$ 2-commute.

\begin{equation}\label{cd:family}
	\begin{tikzcd}
		\fam \arrow[d, "\pi", "\text{flat}"'] \arrow[r,"\hat{\phi}"] \arrow[dr, phantom, "\square" , color=black] & \X \arrow[rd,"\hat{\eta}"] \arrow[r] \arrow[d, "\eta", "\text{flat}"'] \arrow[drr, phantom, "\square" , color=black]                               & X \arrow[rd, "\text{strict}"]            &               \\
		\logProd \arrow[u, "\mathbf{s}_i", bend left = 60] \arrow[r, "\phi", "\text{strict}"'] \arrow[d]\arrow[dr, phantom, "\square", color=black]        & \mathcal{E}_n(X|D) \arrow[r, "e_i"', bend right=15]  \arrow[d , "\psi"] & \mathcal{E}_n^+(X|D) \arrow[l,"\mathbf{p}"'] \arrow[r] & \mathcal{A}_X \\
		X^n \arrow[r, "\text{strict}"]                      & \mathcal{A}_X^n \arrow[rru, "{\mathrm{pr}_i}"', bend right=10]                               &                                          &              
	\end{tikzcd}
\end{equation}

\begin{theorem} \label{thm:logprod}
    Given a simple normal crossings pair $(X,D)$, the scheme $\logProd$ is a simple normal crossings compactification of $(X\backslash D)^n$. It represents the moduli stack of stable $n$-pointed grid expansions.
\end{theorem}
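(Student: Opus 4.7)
The plan is to split the statement into the topological/geometric claim that $\logProd$ is an SNC compactification of $(X\setminus D)^n$, and the moduli representability claim, addressing each in turn.

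For the first claim, since $\logProd \to X^n$ is defined by the pullback square \ref{cd:logprod} of the modification $\psi\colon \gridexp \to \A_X^n$ of Artin fans induced by the subdivision $\tropmod \to \S^n$, it is automatically a proper birational modification, so properness of $\logProd$ follows from properness of $X^n$ together with stability of proper modifications under strict base change. The interior (origin) cone of $\S^n$ is left untouched by the subdivision, and its preimage in $X^n$ under the characteristic map is exactly $(X\setminus D)^n$, giving the desired open dense embedding. For the SNC structure, I would verify that every maximal cone of $\tropmod$ is smooth in the toric sense. By the construction in Proposition~\ref{prop:tropmoduli_grid}, a maximal cone is specified by an $r$-tuple of total orders $(T_1,\ldots,T_r)$ on $\{0, u_1^{(j)}, \ldots, u_n^{(j)}\}$ and decomposes as a product of Weyl-chamber cones, one per coordinate $j$, each generated by partial-sum vectors that form part of a lattice basis of $\Z^n$. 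Pulling this smooth subdivision back along the strict morphism $X^n \to \A_X^n$ produces an iterated smooth blow-up of $X^n$ along intersections of boundary strata, hence an SNC compactification.

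For the moduli claim, I would build a natural bijection between morphisms $S \to \logProd$ and isomorphism classes of stable $n$-pointed grid expansions over $S$. In one direction, the universal family $\pi\colon \fam \to \logProd$ together with the sections $\mathbf{s}_1,\ldots,\mathbf{s}_n$ from diagram \ref{cd:family} pulls back along any $S \to \logProd$ to give a stable $n$-pointed grid expansion over $S$ in the sense of Definition~\ref{lem:gridexppullback}. In the reverse direction, starting from data $(\X_S \to S, s_1,\ldots,s_n)$ with classifying map $f\colon S \to \gridexp$, I would extract a morphism $g\colon S \to X^n$ by composing each $s_i$ with $\hat{f}\colon \X_S \to \X$ and then with the strict map $\X \to X$, and assembling the $n$ resulting morphisms to $X$. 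The $2$-commutativity conditions built into Definition~\ref{lem:gridexppullback} ensure that $g$ and $f$ agree, up to the canonical isomorphism, after projection to $\A_X^n$, so the universal property of the pullback square \ref{cd:logprod} produces a morphism $S \to \logProd$. Pulling back $\mathbf{s}_i$ along this morphism recovers the original $s_i$ by the compatibility $e_i\circ \phi = \hat{\eta}\circ\hat{\phi}\circ\mathbf{s}_i$ used to construct the universal sections.

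I expect the main obstacle to be the uniqueness portion of the moduli bijection, specifically the handling of the $2$-isomorphism $\lambda$ appearing in Definition~\ref{lem:pullback_mor} and the associated rubber action $\aut$ from Lemma~\ref{lem:2-iso} and Remark~\ref{rem:rubber_action}. Two classifying maps $f_1,f_2\colon S \to \gridexp$ related by a $2$-isomorphism produce rubber-equivalent pointed expansions, and I need to show that imposing the stability requirement, namely that each marked point lies on the smooth locus of the irreducible component labelled by the marking $m$, forces the rubber-torus action of $T_\tau$ on the set of admissible sections to be free. Once this is checked, the underlying map $S \to \logProd$ is canonical, which is enough because $\logProd$ is a scheme rather than a stack. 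This is precisely the phenomenon underlying the forward reference to Observation~\ref{obs:stability}.
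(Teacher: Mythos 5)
Your proposal is correct and follows essentially the same route as the paper: properness from completeness of the subdivision $\tropmod \to \Sigma_X^n$ and base change along the pullback square (\ref{cd:logprod}), the open dense embedding from the untouched interior cone, and representability via the universal property of the 2-fibre product together with the stability/rubber-action argument of Observation~\ref{obs:stability}. Your explicit check that the maximal cones are unimodular (products of Weyl-chamber cones generated by partial-sum vectors extending to a lattice basis) is a slightly more careful justification of smoothness than the paper's terse appeal to $\tropmod$ being simplicial, but it is the same argument in substance.
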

\begin{proof}
    The first statement of the theorem is a direct consequence of the construction of $\logProd$ as a logarithmic modification of $X^n$. As $\tropmod$ is a complete subdivision of $\S_X^n$, therefore $\psi\colon  \gridexp \to \A_X^n$ is proper and so is the pullback $\logProd\to X^n$. The properness of $\logProd$ over $\Spec k$ follows. The scheme $\logProd$ is smooth over its Artin fan $\gridexp$ as smoothness is preserved under pullbacks, and $\gridexp$ is smooth as the cone complex $\tropmod$ is smooth by Proposition~\ref{prop:simplicial}. Therefore, the scheme $\logProd$ is smooth with a simple normal crossings boundary divisor.
     
    The second statement follows directly from construction. A stable $n$-pointed grid expansion $(f\colon  S \to \gridexp, s_1, s_2, \dots s_n\colon S \to f^*\X)$ induces a morphism $s\colon  S \to X^n$ given by the compositions $S \xrightarrow{s_i} f^* \X \to X$, and one can check  (see Diagram~\ref{cd:family}) that the maps $f$ and $s$ commute with the bottom right corner of Diagram~\ref{cd:logprod}. Therefore, the data $(f\colon  S \to \gridexp, s_1, s_2, \dots s_n\colon S \to f^*\X)$ induces a morphism $F\colon  S\to \logProd$. 
	
	It remains to check that two stable $n$-pointed grid expansions induce the same morphism $F\colon  S \to \logProd$ if and only if they are isomorphic, but this follows directly from the universal property of the 2-fibre product in Diagram~\ref{cd:logprod}.
\end{proof}

\begin{remark}
	By Proposition~\ref{prop:simplicial}, the universal family $\fam$ is also smooth. 
\end{remark}

\begin{remark}
	There is an equivalent formulation of the above result (see \cite[Section 1.5]{abram_fantechi}) in terms of the stack $\gridexp$ of grid expansions of $X$, the universal family $\eta\colon \X \to \gridexp$, and the stack $(\X|\cal{D})^{[n]}$ of stable $n$-pointed grid expansions which is defined to be the open locus of the fibre product $\X_{\eta}^{(n)}$ with no automorphisms. The above lemmas then prove that $\logProd$ represents $(\X|\cal{D})^{[n]}$.
\end{remark}
\begin{remark}
	We observe that the tropical family of $n$ points $\tropfam$ is the same as the tropical moduli space of $n+1$ points. Consequently, we have $\fam = \logProdexample{X}{D}{n+1}$.
\end{remark}

\subsubsection{Recursive description of diagonals}

Consider \emph{diagonals} \[\Delta_I = \{(x_1, x_2, \dots, x_n) \in X^n \; | \; x_i = x_j \; \forall \; i, j \in I\} \subset X^n,\] and let $\delta_I$ denote the strict transform of $\Delta_I$ under the modification $\logProd \to X^n$. We note that $\Delta_I \cong X^{n-|I|+1}$; to remember the indexing set $I \subset \oneton$ where points are identified, we introduce the set notation $X^{[n]/I} \cong X^{n-|I|+1}$. Let $\Delta_I$ be endowed the logarithmic structure on $X^{[n]/I}$, so it has Artin fan $\A_X^{[n]/I}$ and tropicalisation $\Sigma_X^{[n]/I}$. The embedding $\Delta_I \hookrightarrow X^n$ is thus a morphism of logarithmic schemes.

\begin{proposition} \label{prop:diagonal}
	The strict transform $\delta_I$ of the diagonal $\Delta_I$ is the locus of configurations in $\logProd$ where points indexed by $I$ coincide. 
\end{proposition}
In other words, the strict transform $\delta_I$ is again a diagonal in $\logProd$, and there is an identification $\delta_I \cong (X|D)^{[n]/I}$. This result will be useful in proving that $\logFM$ is a simple normal crossings compactification, see Theorem~\ref{thm:logFM_fibres}. 
\begin{proof}
	Proposition~\ref{prop:tropmoduli_grid} gives a subdivision of cone complexes $\Pi_{[n]/I}(\Sigma_X) \to \Sigma_X^{[n]/I}$. One can verify that there is a Cartesian diagram of cone complexes
	\begin{equation*}
		\begin{tikzcd}
			\Pi_{[n]/I}(\Sigma_X) \arrow[r, hook] \arrow[d] \pullback{rd} & \tropmod \arrow[d]\\
			\Sigma_X^{[n]/I} \arrow[r, hook] & \Sigma_X^n,
		\end{tikzcd}
	\end{equation*} 
	and a corresponding Cartesian diagram of Artin fans 
	\begin{equation*}
		\begin{tikzcd}
			\cal{E}_{[n]/I}(X|D) \arrow[r] \arrow[d] \fspullback{rd} & \gridexp \arrow[d]\\
			\A_X^{[n]/I} \arrow[r] & \A_X^n.
		\end{tikzcd}
	\end{equation*} 
	Since both $\logProd$ and $(X|D)^{[n]/I}$ are modifications of $X^n$ and $X^{[n]/I}$ obtained by pulling back the modifications $\gridexp \to \A_X^n$ and $\cal{E}_{[n]/I}(X|D) \to \A_X^{[n]/I}$ respectively, there is a Cartesian diagram in the category of fine and saturated logarithmic schemes
	\begin{equation*}
		\begin{tikzcd}
			(X|D)^{[n]/I} \arrow[r] \arrow[d] \fspullback{rd} & \logProd \arrow[d]\\
			X^{[n]/I} \arrow[r] & X^n.
		\end{tikzcd}
	\end{equation*} 
	Since $\Delta_I \cong X^{[n]/I}$, and $\logProd \to X^n$ is a log blow-up by Proposition~\ref{prop:blow-up}, by Lemma~\ref{lem:fs-pullback} we conclude that $(X|D)^{[n]/I}$ must be the strict transform of $\Delta_I$, hence equal to $\delta_I$.
\end{proof}

\subsection{Description of boundary strata and the rubber action} \label{subsection:b_strata}
We describe the boundary strata in more detail, and provide a description for the objects represented by the points in the boundary strata. Many results in this section follow almost verbatim from Sections 1 and 2 of \cite{rubbertori}.

The cone of a combinatorial type $\tau$ defines a substack $\B T_{\tau} \hookrightarrow \gridexp$. The pullback 
\begin{equation}
	\begin{tikzcd}
		V_{\tau}\arrow[d] \arrow[r, hook]\arrow[dr, phantom, "\square"] & \logProd \arrow[d,"\phi"]\\
		\B T_{\tau} \arrow[r, hook] & \gridexp
	\end{tikzcd}
\end{equation}
defines a stratum $V_{\tau}$ of $\logProd$. 

\begin{theorem}\label{thm:logprod_strat}
    The boundary of $\logProd$ admits a stratification by $V_{\tau}$. 
\end{theorem}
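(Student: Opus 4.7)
The plan is to transport the canonical stratification of the Artin fan $\gridexp$ down to $\logProd$ via the strict morphism $\phi\colon \logProd \to \gridexp$ in Diagram~\ref{cd:family}. Once this is in hand, the boundary stratification follows by identifying the open stratum as $(X\setminus D)^n$.

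First, I would invoke the standard fact (a consequence of Theorem~\ref{thm:CCUW} and the toric stratification of $[\mathbb{A}^r/\Gm^r]$) that any Artin fan arising from a cone complex carries a canonical locally closed stratification indexed by its cones: a cone $\tau$ of dimension $k$ gives a stratum $\B T_\tau \cong \B \Gm^k$, with $\B T_{\tau'} \subseteq \overline{\B T_\tau}$ exactly when $\tau$ is a face of $\tau'$, and with open stratum at the zero cone. Applied to $\gridexp$ and its cone complex $\tropmod$, this yields a stratification of $\gridexp$ indexed by the combinatorial types $\tau$. Since $\phi$ is strict, the preimages $V_\tau = \phi^{-1}(\B T_\tau) = \logProd \times_{\gridexp} \B T_\tau$ are locally closed in $\logProd$, their disjoint union exhausts $\logProd$, and the face-based closure relations on $\gridexp$ transport to closure relations on $\{V_\tau\}$, promoting the decomposition to a genuine stratification.

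To isolate the boundary, I would identify $V_0$: the zero cone of $\tropmod$ lies over the zero cone of $\Sigma_X^n$, whose preimage in $X^n$ along the strict morphism $X^n \to \A_X^n$ is $(X\setminus D)^n$. Chasing Diagram~\ref{cd:logprod} then gives $V_0 = (X\setminus D)^n$ as an open subscheme of $\logProd$, so $\logProd \setminus (X\setminus D)^n = \bigsqcup_{\tau\neq 0} V_\tau$ is the desired stratification. The main subtlety worth flagging is the check that the pulled-back closure relations really yield a stratification, and not merely a locally closed decomposition; this follows from properness of $\phi$ over each Artin-fan stratum, which in turn follows from $\phi$ being the logarithmic modification obtained by toric pullback. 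No other step carries genuine difficulty, as all the substantive work has already been done in constructing $\tropmod$ and verifying strictness of $\phi$.
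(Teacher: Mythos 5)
Your proof is correct and follows essentially the same route as the paper's, which simply observes that $\logProd$ has Artin fan $\gridexp$ (equivalently, its logarithmic structure is given by the cone complex $\tropmod$, whose cones are exactly the combinatorial types) and that this logarithmic structure determines the stratification by the $V_\tau$; your write-up just makes explicit the pullback of the canonical Artin-fan stratification along the strict morphism $\phi$ and the identification of the open stratum with $(X\setminus D)^n$. The one point I would adjust is your justification of the closure relations: these follow from flatness of the strict morphism $\phi$ (lifting of generizations along the pullback of the flat cover $X^n \to \A_X^n$), not from properness of $\phi$ over the Artin-fan strata.
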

\begin{proof}
	Recall that $\logProd$ has $\gridexp$ as its Artin fan; in other words, the logarithmic structure on $\logProd$ is given by the cone complex $\tropmod$, whose cones are precisely given by the combinatorial types. The logarithmic structure determines a stratification by the strata $V_{\tau}$.
\end{proof} 
\begin{figure}
    \centering
	\tikzset{every picture/.style={line width=0.75pt}} 
	
	\begin{tikzpicture}[x=0.75pt,y=0.75pt,yscale=-1,xscale=1]
		
		\draw  (56,664.09) -- (156,664.09)(66,574.09) -- (66,674.09) (149,659.09) -- (156,664.09) -- (149,669.09) (61,581.09) -- (66,574.09) -- (71,581.09)  ;
		\draw   (62.89,661.37) -- (68.35,666.97)(68.42,661.44) -- (62.82,666.9) ;
		\draw   (102.89,661.37) -- (108.35,666.97)(108.42,661.44) -- (102.82,666.9) ;
		\draw    (106.45,647) -- (106.45,663.82) ;
		\draw    (106.45,577) -- (106.45,663.82) ;
		\draw  [draw opacity=0][fill={rgb, 255:red, 80; green, 227; blue, 194 }  ,fill opacity=0.31 ] (66,573.35) -- (155.58,573.35) -- (155.58,664.09) -- (66,664.09) -- cycle ;
		
		\draw (98,667) node [anchor=north west][inner sep=0.75pt]   [align=left] {$\displaystyle I$};
		\draw (49,666) node [anchor=north west][inner sep=0.75pt]   [align=left] {$\displaystyle I^{c}$};
		\draw (166,654.09) node [anchor=north west][inner sep=0.75pt]   [align=left] {$\displaystyle D_{k}$};
		\draw (-8,549.09) node [anchor=north west][inner sep=0.75pt]   [align=left] {Other divisors};

	\end{tikzpicture}

    \caption{A combinatorial type $\tau$ whose stratum closure $\overline{V_{\tau}}$ is a divisor.}
    \label{fig:comb_type_divisor}
\end{figure}


We give an explicit calculation of the codimension of boundary strata.
\begin{proposition}[Codimension of boundary strata]
    Given a combinatorial type $\tau=(\F \to \F_{\S},m)$ of an $n$-pointed grid expansion, consider any $n$-pointed grid expansion $(\cal{P},m)$ with type $\tau$. Writing \[\cal{P} = \prod_{1\leq i \leq n} \cal{P}_i,\] we let $w_i$ be the number of vertices on $\cal{P}_i$ apart from the origin. Then the codimension of the stratum $V_{\tau}$ is \[\sum_{1\leq i\leq n}w_i.\]
\end{proposition}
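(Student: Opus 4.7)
The plan is to transfer the codimension computation to the tropical side and then count the dimension of a cone in $\tropmod$.

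First I would observe that in the defining pullback from the proof of Theorem~\ref{thm:logprod_strat},
\[
\begin{tikzcd}
V_{\tau} \arrow[r, hook] \arrow[d] & \logProd \arrow[d, "\phi"] \\
\B T_{\tau} \arrow[r, hook] & \gridexp,
\end{tikzcd}
\]
the vertical map $\phi$ is strict and smooth: in Diagram~\ref{cd:logprod} it is realised as the pullback of the smooth representable map $X^n \to \A_X^n$. Consequently
\[
\operatorname{codim}_{\logProd} V_{\tau} \;=\; \operatorname{codim}_{\gridexp} \B T_{\tau} \;=\; \dim \sigma_{\tau},
\]
the last equality being the standard Artin-fan dictionary of Theorem~\ref{thm:CCUW} applied to the cone complex $\tropmod$. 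The problem thus reduces to computing $\dim \sigma_{\tau}$.

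Next, I would unwind the explicit construction of $\sigma_{\tau}$ given in the proof of Proposition~\ref{prop:tropmoduli_grid}. There $\sigma_{\tau}$ is realised inside $\S^n$ as an intersection of half-spaces imposing the total orders $T_1, \ldots, T_r$ of Remark~\ref{rem:combtype}, one for each ray of $\S$. These constraints decouple coordinate by coordinate, so $\sigma_{\tau}$ factors as a product $\prod_i \sigma_{\tau,i}$, where $\sigma_{\tau,i} \subset \rthOrthant[n]$ is the cone cut out in the $i$th factor by the total order $T_i$ on $0, u_1^{(i)}, \ldots, u_n^{(i)}$. In particular, $\dim \sigma_{\tau} = \sum_i \dim \sigma_{\tau,i}$.

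Finally, the dimension of $\sigma_{\tau,i}$ equals the number of distinct strictly positive values appearing in any $n$-tuple realising $T_i$ (each such value is an independent non-negative parameter, and the total order fixes how the $u_k^{(i)}$ cluster onto these parameters). Under the bijection of Lemma~\ref{lemma:moduli_trop_points}, these distinct strictly positive values correspond bijectively to the non-origin vertices of $\cal{P}_i$, because the marking $m_i \colon \oneton \to V(\cal{P}_i)$ is surjective away from the origin. Hence $\dim \sigma_{\tau,i} = w_i$, and summing over the rays of $\S$ yields the desired formula. The only substantive step is recognising that $\sigma_{\tau}$ splits as a product along the rays of $\S$, which is immediate from the grid (i.e. coordinate-wise) nature of the subdivision; the rest is bookkeeping, and the surjectivity clause in the definition of a marked grid subdivision makes the match between distinct values and vertices of $\cal{P}_i$ transparent.
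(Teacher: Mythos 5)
Your proof is correct and takes essentially the same route as the paper: the paper's own proof is a one-line assertion that the result ``follows from counting the dimension of the cone $\mu_{\tau}$ via the description in terms of the total orders of coordinates,'' and your argument simply makes that count explicit (strictness of $\phi$ reducing the codimension to $\dim \mu_{\tau}$, the coordinate-wise splitting of the cone along the rays of $\S$, and the identification of distinct positive values in $T_i$ with the non-origin vertices of $\cal{P}_i$). No gaps.
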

\begin{proof}
    This follows from counting the dimension of the cone $\tau$ via the description in terms of the total pre-orders of coordinates.
\end{proof}

Here are some examples of stable $n$-pointed grid expansions with their combinatorial types, where $(X,D) = (\PP^2, D_1 + D_2)$, $D_1$ and $D_2$ being two coordinate lines on $\PP^2$:
\begin{figure}
\centering
\tikzset{every picture/.style={line width=0.75pt}} 

\begin{tikzpicture}[x=0.75pt,y=0.75pt,yscale=-1,xscale=1]
	
	\draw   (253,480.09) .. controls (253,459.06) and (284.21,442) .. (322.7,442) .. controls (361.19,442) and (392.4,459.06) .. (392.4,480.09) .. controls (392.4,501.13) and (361.19,518.19) .. (322.7,518.19) .. controls (284.21,518.19) and (253,501.13) .. (253,480.09) -- cycle ;
	\draw [color={rgb, 255:red, 0; green, 0; blue, 0 }  ,draw opacity=1 ]   (280.4,464.19) .. controls (328.4,455.19) and (323.4,478.19) .. (366.4,463.19) ;
	\draw    (360.33,453) .. controls (348.92,500.28) and (372.92,468.28) .. (360.33,511.19) ;
	\draw  (41,521) -- (141,521)(51,431) -- (51,531) (134,516) -- (141,521) -- (134,526) (46,438) -- (51,431) -- (56,438)  ;
	\draw   (47.89,518.28) -- (53.35,523.88)(53.42,518.35) -- (47.82,523.81) ;
	\draw   (312.89,486.28) -- (318.35,491.88)(318.42,486.35) -- (312.82,491.81) ;
	\draw   (342.89,480.62) -- (348.35,486.21)(348.42,480.69) -- (342.82,486.14) ;
	\draw  [draw opacity=0][fill={rgb, 255:red, 80; green, 227; blue, 194 }  ,fill opacity=0.31 ] (51,430.25) -- (140.58,430.25) -- (140.58,521) -- (51,521) -- cycle ;
	
	\draw (343,515) node [anchor=north west][inner sep=0.75pt]  [font=\small]  {$D_{1}$};
	\draw (259,465) node [anchor=north west][inner sep=0.75pt]  [font=\small]  {$D_{2}$};
	\draw (246,425) node [anchor=north west][inner sep=0.75pt]    {$X$};
	\draw (40,523) node [anchor=north west][inner sep=0.75pt]  [font=\small]  {$1,2$};
	\draw (309,492) node [anchor=north west][inner sep=0.75pt]  [font=\small]  {$1$};
	\draw (336.67,485) node [anchor=north west][inner sep=0.75pt]  [font=\small]  {$2$};

\end{tikzpicture}

\caption{Open stratum $(\PP^2 \backslash D_1\cup D_2)^2 \subset \logProdexample{\PP^2}{D_1+D_2}{2}$} \label{fig:M1}
\end{figure}

\begin{figure}
    \centering

    \tikzset{every picture/.style={line width=0.75pt}} 
    
    \begin{tikzpicture}[x=0.75pt,y=0.75pt,yscale=-1,xscale=1]
    	
    	\draw  [draw opacity=0][fill={rgb, 255:red, 80; green, 227; blue, 194 }  ,fill opacity=0.31 ] (51,257.25) -- (140.58,257.25) -- (140.58,348) -- (51,348) -- cycle ;
    	\draw  [color={rgb, 255:red, 0; green, 0; blue, 0 }  ,draw opacity=1 ][fill={rgb, 255:red, 126; green, 211; blue, 33 }  ,fill opacity=0.3 ] (382.93,277) .. controls (381.09,277) and (379.6,283.51) .. (379.6,291.55) .. controls (379.6,299.58) and (381.09,306.09) .. (382.93,306.09) .. controls (384.76,306.09) and (386.25,312.61) .. (386.25,320.64) .. controls (386.25,328.67) and (384.76,335.19) .. (382.93,335.19) -- (356.33,335.19) .. controls (358.16,335.19) and (359.65,328.67) .. (359.65,320.64) .. controls (359.65,312.61) and (358.16,306.09) .. (356.33,306.09) .. controls (354.49,306.09) and (353,299.58) .. (353,291.55) .. controls (353,283.51) and (354.49,277) .. (356.33,277) -- cycle ;
    	\draw   (252,304.09) .. controls (252,283.06) and (283.21,266) .. (321.7,266) .. controls (360.19,266) and (391.4,283.06) .. (391.4,304.09) .. controls (391.4,325.13) and (360.19,342.19) .. (321.7,342.19) .. controls (283.21,342.19) and (252,325.13) .. (252,304.09) -- cycle ;
    	\draw [color={rgb, 255:red, 0; green, 0; blue, 0 }  ,draw opacity=1 ]   (279.4,288.19) .. controls (327.4,279.19) and (322.4,302.19) .. (365.4,287.19) ;
    	\draw   (312.89,306.28) -- (318.35,311.88)(318.42,306.35) -- (312.82,311.81) ;
    	\draw  (41,348) -- (141,348)(51,258) -- (51,358) (134,343) -- (141,348) -- (134,353) (46,265) -- (51,258) -- (56,265)  ;
    	\draw   (47.89,345.28) -- (53.35,350.88)(53.42,345.35) -- (47.82,350.81) ;
    	\draw   (87.89,345.28) -- (93.35,350.88)(93.42,345.35) -- (87.82,350.81) ;
    	\draw   (367.89,301.62) -- (373.35,307.21)(373.42,301.69) -- (367.82,307.14) ;
    	\draw    (90.4,347.91) -- (90.33,258) ;
    	\draw [shift={(90.33,256)}, rotate = 89.95] [color={rgb, 255:red, 0; green, 0; blue, 0 }  ][line width=0.75]    (10.93,-3.29) .. controls (6.95,-1.4) and (3.31,-0.3) .. (0,0) .. controls (3.31,0.3) and (6.95,1.4) .. (10.93,3.29)   ;
    	
    	\draw (309,312) node [anchor=north west][inner sep=0.75pt]  [font=\small]  {$1$};
    	\draw (342,339) node [anchor=north west][inner sep=0.75pt]  [font=\small]  {$D_{1}$};
    	\draw (258,289) node [anchor=north west][inner sep=0.75pt]  [font=\small]  {$D_{2}$};
    	\draw (245,249) node [anchor=north west][inner sep=0.75pt]    {$X$};
    	\draw (39,350) node [anchor=north west][inner sep=0.75pt]  [font=\small]  {$1$};
    	\draw (361.67,306) node [anchor=north west][inner sep=0.75pt]  [font=\small]  {$2$};
    	\draw (378.26,338.61) node [anchor=north west][inner sep=0.75pt]  [font=\small]  {$N_{D_{1}} X$};
    	\draw (84.33,350.33) node [anchor=north west][inner sep=0.75pt]  [font=\small]  {$2$};

    \end{tikzpicture}
    \caption{The boundary divisor with $p_2$ on an expansion component over $D_1$}
    \label{fig:enter-label}
\end{figure}

\subsubsection{Rubber action on static target.} \label{subsection:rubbertori}
Recall that we denote both a combinatorial type and its corresponding cone by $\tau$.

Fix a cone $\tau$ of $\tropmod$, and consider the corresponding restriction of the universal grid subdivision $\tropfam \to \tropmod$ to a subdivision $(\Sigma \times \tau)^{\dagger} \to \tau$. The cone $\tau$ induces the substack \[\B T_{\tau} \subset \A_{\tau}\subset \gridexp,\] where $\A_{\tau} = [U_{\tau}/T_{\tau}]$ is an open substack of $\gridexp$, the variety $U_{\tau}$ is the affine toric variety corresponding to the cone $\tau$, and $\B T_{\tau}$ is the quotient stack of the torus fixed point by $T_{\tau}$. 

\begin{proposition}\label{prop:family_over_torusstack}
	The restriction of the universal family $\X \to \gridexp$ to the closed substack $\B T_{\tau}$ can be written as \[\X_{\tau} = [Y_{\tau}/T_{\tau}] \to \B T_{\tau},\] where $Y_{\tau}$ is a grid expansion with combinatorial type $\tau$.
\end{proposition}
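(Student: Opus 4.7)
The plan is to restrict the universal family to a Zariski-open neighborhood of $\B T_{\tau}$ in $\gridexp$ and then extract the closed fiber. By Theorem \ref{thm:CCUW}, the star of $\tau$ in $\tropmod$ corresponds to the open substack $\A_{\tau} = [U_{\tau}/T_{\tau}] \hookrightarrow \gridexp$, with $\B T_{\tau}$ sitting inside as the quotient of the torus-fixed point $0 \in U_{\tau}$. I would then identify the universal family over $\A_{\tau}$. Pulling back the tropical family diagram from Proposition \ref{prop:tropmoduli_grid} over the cone $\tau$ — which, as recorded at the start of Section \ref{subsection:rubbertori}, is the subdivision $(\Sigma \times \tau)^{\dagger} \to \tau$ — via the correspondence between cone subdivisions and toric modifications yields a $T_{\tau}$-equivariant modification $Y_{\tau}^+ \to X \times U_{\tau}$ of the trivial family. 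Reading off Diagram (\ref{cd:family}) over $\A_{\tau}$ then identifies the restricted universal family with the quotient stack $[Y_{\tau}^+/T_{\tau}] \to [U_{\tau}/T_{\tau}]$.

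Next, restriction along the closed embedding $\B T_{\tau} = [\{0\}/T_{\tau}] \hookrightarrow [U_{\tau}/T_{\tau}]$ commutes with the formation of quotient stacks, so the restriction of $\X \to \gridexp$ to $\B T_{\tau}$ is the quotient of the scheme-theoretic fiber $Y_{\tau} \colonequals Y_{\tau}^+|_0$ by the residual $T_{\tau}$-action. This gives $\X_{\tau} = [Y_{\tau}/T_{\tau}]$, establishing the shape of the restriction.

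It remains to identify $Y_{\tau}$ with a grid expansion of combinatorial type $\tau$ in the sense of Definition \ref{def:grid_exp}. For any lattice point $v$ in the relative interior of $\tau$, the one-parameter subgroup $\AA^1 \hookrightarrow U_{\tau}$ through $v$ pulls $Y_{\tau}^+$ back to a family over $\AA^1$. By the construction of the subdivision $(\Sigma \times \tau)^{\dagger}$ in the proof of Proposition \ref{prop:tropmoduli_grid}, this pulled-back family agrees with the modification of $X \times \AA^1$ along the cone over $\cal{P}_v \times \{1\}$, where $\cal{P}_v$ is the $n$-marked grid subdivision represented by $v$. Its central fiber is, by Definition \ref{def:grid_exp}, the grid expansion $X_{\cal{P}_v}$, which has combinatorial type $\tau$; by flatness of the $\AA^1$-family, this central fiber is $Y_{\tau}$.

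The main anticipated obstacle is ensuring the compatibility between the multi-parameter family $Y_{\tau}^+ \to U_{\tau}$ and the single-parameter Definition \ref{def:grid_exp} — specifically, that $Y_{\tau}^+$ is flat over $U_{\tau}$ near $0$ and that its fiber over $0$ is reduced with the expected combinatorial type, so that restricting to any one-parameter degeneration recovers exactly the grid expansion. Both points follow from the combinatorial-flatness and reducedness conclusions of Proposition \ref{prop:tropmoduli_grid}, which under the dictionary between cone subdivisions and toric modifications translate into flatness and integrality of $Y_{\tau}^+ \to U_{\tau}$ along the central fiber.
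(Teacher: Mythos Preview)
Your proposal is correct and follows essentially the same route as the paper's proof: identify the universal family over $\A_{\tau}$ as the quotient of a toric modification of $X \times U_{\tau}$, pass to the fiber over the torus-fixed point, and then compare with Definition~\ref{def:grid_exp} via a one-parameter subgroup $\AA^1 \to U_{\tau}$. The only cosmetic differences are that the paper chooses the diagonal one-parameter subgroup (identifying $\tau \cong \Rgeq^k$) rather than an arbitrary interior lattice point, and that the paper packages the ``scheme-theoretic pullback equals fs-pullback'' step by citing weak semistability and \cite[Lemma~2.2.6]{uni_semistable}, whereas you invoke the combinatorial flatness and reducedness of Proposition~\ref{prop:tropmoduli_grid} directly --- which is exactly what weak semistability unpacks to.
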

\begin{proof}
	Let $X_{\tau}$ be the logarithmic modification of $X \times U_{\tau}$ induced by $(\Sigma \times \tau)^{\dagger}$.
	The discussion in \cite[Section 2.1.3]{rubbertori} shows that the restriction of the universal family to $\A_{\tau}$ is $[X_{\tau}/T_{\tau}] \to \A_{\tau}$. The restriction to the closed substack $\B T_{\tau}$ is then $\X_{\tau} = [Y_{\tau}/T_{\tau}] \to \B T_{\tau}$, where $Y_{\tau}$ is the fibre of $X_{\tau} \to U_{\tau}$ over the torus fixed point. 
	We identify $\tau$ as the cone $\Rgeq^k$ for some $k\geq 1$ and consider the toric morphism $\AA^1 \to U_{\tau}$ given by the diagonal morphism $\Rgeq \to \tau$. Due to a result in \cite[Lemma 2.2.6]{uni_semistable}, the scheme-theoretic pullback 
	\begin{equation*}
		\begin{tikzcd}
			X_{\tau,\AA^1}\arrow[d] \arrow[r, hook]\arrow[dr, phantom, "\square"] & X_{\tau} \arrow[d,"\phi"]\\
			\AA^1 \arrow[r, hook] & U_{\tau}
		\end{tikzcd}
	\end{equation*}
	coincides with the fs-pullback, as the morphism $X_{\tau} \to U_{\tau}$ is weakly semistable. It follows that $Y_{\tau}$ is a grid expansion with combinatorial type $\tau$.

\end{proof}


As promised in Remark~\ref{rem:rubber_action}, we describe the \emph{rubber action} $\aut\colon  f_1^*\X \to f_2^*\X$ induced by a 2-isomorphism $\lambda$ between two morphisms factoring through some substack $\B T_{\tau}$, where $T_{\tau}$ is known as the \emph{rubber torus}. In this case, $f_1^*\X$ and $f_2^*\X$ are isomorphic and have combinatorial type $\tau$.

The 2-isomorphisms $\lambda$ between two morphisms $f_1\colon  S \to \gridexp$ and $f_2\colon  S\to \gridexp$ factoring through $\B T_{\tau}$ correspond to the elements $\lambda$ of the \emph{rubber torus} $T_{\tau}$, and the induced isomorphisms $\aut$ are called the \emph{rubber actions} of the \emph{rubber torus} $T_{\tau}$. They arise from the following 2-commutative diagram:
	\begin{equation*}
		\begin{tikzcd}
			& f_2^*\mathcal{X} \arrow[rd] \arrow[d, "\eta_2"]& & \\                               
			& S \arrow[rd, "f_2", ""{name=U,inner sep=1pt,below}] & \left[Y_{\tau}/T_{\tau}\right] \arrow[r] \arrow[d] \pullback{rd}  & \mathcal{X} \arrow[d]  \\
			f_1^*\mathcal{X} \arrow[rru] \arrow[d, "\eta_1"] \arrow[ruu, " \lambda^{\dagger}"]  &  & \B T_{\tau} \arrow[r]    & \mathcal{E}_n(X|D)  \\
			S \arrow[rru, "f_1"', ""{name=D,inner sep=1pt}] \arrow[ruu, "\mathrm{id}"']  \arrow[Rightarrow, from=D, to=U, "\lambda"]&                                                                          &    &                  
		\end{tikzcd}.
	\end{equation*}
	On the other hand, over every closed point of the family $\Spec k \to S$, the fibre of the family $f_1^*\X \to S$ is an $n$-marked grid expansion of the given combinatorial type $\tau = (\F \to \F_{\S}, m)$, given by an $n$-marked grid subdivision $\cal{P}$. The position of each vertex $v$ of $\cal{P}$ is determined by a \emph{tropical position map} $\varphi_v\colon  \tau \to \sigma_v$ with underlying lattice map $\varphi_v\colon  N_{\tau} \to N_{\sigma_v}$, where $\sigma_v$ is the minimal cone in $\Sigma$ containing $v$. Recalling that the tori corresponding to the cones $\tau$ and $\sigma_v$ are $T_{\tau} = N_{\tau} \otimes \Gm$ and $T_{\sigma_v} = N_{\sigma_v} \otimes \Gm$ respectively, the rubber action is then \[\varphi_v \otimes \Gm\colon  T_{\tau} \to T_{\sigma_v}\] acting on the component $Y_v$ corresponding to the vertex $v$. We refer the reader to \cite[Section 1]{rubbertori} for more details.

\begin{figure}
	\centering

	\tikzset{every picture/.style={line width=0.75pt}} 
	
	\begin{tikzpicture}[x=0.75pt,y=0.75pt,yscale=-1,xscale=1]
		
		\draw  [draw opacity=0][fill={rgb, 255:red, 80; green, 227; blue, 194 }  ,fill opacity=0.31 ] (51,257.25) -- (140.58,257.25) -- (140.58,348) -- (51,348) -- cycle ;
		\draw  [color={rgb, 255:red, 0; green, 0; blue, 0 }  ,draw opacity=1 ][fill={rgb, 255:red, 126; green, 211; blue, 33 }  ,fill opacity=0.3 ] (382.93,277) .. controls (381.09,277) and (379.6,283.51) .. (379.6,291.55) .. controls (379.6,299.58) and (381.09,306.09) .. (382.93,306.09) .. controls (384.76,306.09) and (386.25,312.61) .. (386.25,320.64) .. controls (386.25,328.67) and (384.76,335.19) .. (382.93,335.19) -- (356.33,335.19) .. controls (358.16,335.19) and (359.65,328.67) .. (359.65,320.64) .. controls (359.65,312.61) and (358.16,306.09) .. (356.33,306.09) .. controls (354.49,306.09) and (353,299.58) .. (353,291.55) .. controls (353,283.51) and (354.49,277) .. (356.33,277) -- cycle ;
		\draw   (252,304.09) .. controls (252,283.06) and (283.21,266) .. (321.7,266) .. controls (360.19,266) and (391.4,283.06) .. (391.4,304.09) .. controls (391.4,325.13) and (360.19,342.19) .. (321.7,342.19) .. controls (283.21,342.19) and (252,325.13) .. (252,304.09) -- cycle ;
		\draw [color={rgb, 255:red, 0; green, 0; blue, 0 }  ,draw opacity=1 ]   (279.4,288.19) .. controls (327.4,279.19) and (322.4,302.19) .. (365.4,287.19) ;
		\draw   (312.89,306.28) -- (318.35,311.88)(318.42,306.35) -- (312.82,311.81) ;
		\draw  (41,348) -- (141,348)(51,258) -- (51,358) (134,343) -- (141,348) -- (134,353) (46,265) -- (51,258) -- (56,265)  ;
		\draw   (47.89,345.28) -- (53.35,350.88)(53.42,345.35) -- (47.82,350.81) ;
		\draw   (87.89,345.28) -- (93.35,350.88)(93.42,345.35) -- (87.82,350.81) ;
		\draw   (367.89,301.62) -- (373.35,307.21)(373.42,301.69) -- (367.82,307.14) ;
		\draw    (357,268.2) -- (381.92,268.2) ;
		\draw [shift={(384.92,268.2)}, rotate = 180] [fill={rgb, 255:red, 0; green, 0; blue, 0 }  ][line width=0.08]  [draw opacity=0] (10.72,-5.15) -- (0,0) -- (10.72,5.15) -- (7.12,0) -- cycle    ;
		\draw [shift={(354,268.2)}, rotate = 0] [fill={rgb, 255:red, 0; green, 0; blue, 0 }  ][line width=0.08]  [draw opacity=0] (10.72,-5.15) -- (0,0) -- (10.72,5.15) -- (7.12,0) -- cycle    ;
		\draw    (78.4,375) -- (103.32,375) ;
		\draw [shift={(106.32,375)}, rotate = 180] [fill={rgb, 255:red, 0; green, 0; blue, 0 }  ][line width=0.08]  [draw opacity=0] (10.72,-5.15) -- (0,0) -- (10.72,5.15) -- (7.12,0) -- cycle    ;
		\draw [shift={(75.4,375)}, rotate = 0] [fill={rgb, 255:red, 0; green, 0; blue, 0 }  ][line width=0.08]  [draw opacity=0] (10.72,-5.15) -- (0,0) -- (10.72,5.15) -- (7.12,0) -- cycle    ;
		\draw    (90.4,347.91) -- (90.33,258) ;
		\draw [shift={(90.33,256)}, rotate = 89.95] [color={rgb, 255:red, 0; green, 0; blue, 0 }  ][line width=0.75]    (10.93,-3.29) .. controls (6.95,-1.4) and (3.31,-0.3) .. (0,0) .. controls (3.31,0.3) and (6.95,1.4) .. (10.93,3.29)   ;
		
		\draw (309,312) node [anchor=north west][inner sep=0.75pt]  [font=\small]  {$1$};
		\draw (342,339) node [anchor=north west][inner sep=0.75pt]  [font=\small]  {$D_{1}$};
		\draw (258,289) node [anchor=north west][inner sep=0.75pt]  [font=\small]  {$D_{2}$};
		\draw (245,249) node [anchor=north west][inner sep=0.75pt]    {$X$};
		\draw (39,350) node [anchor=north west][inner sep=0.75pt]  [font=\small]  {$1$};
		\draw (361.67,306) node [anchor=north west][inner sep=0.75pt]  [font=\small]  {$2$};
		\draw (378.26,338.61) node [anchor=north west][inner sep=0.75pt]  [font=\small]  {$N_{D_{1}} X$};
		\draw (84.33,350.33) node [anchor=north west][inner sep=0.75pt]  [font=\small]  {$2$};

	\end{tikzpicture}
	\caption{An example of a rubber action where the positioning of $u_2$ on ray $D_1$ corresponds to a $\Gm$ action by scaling on the fibres of the $\PP^1$-bundle}
	\label{fig:rubber}
\end{figure}

\subsubsection{Boundary strata as moduli spaces of points on expansions of fixed combinatorial types}
We let $\fam_{\tau}$ be the restriction of the universal target $\fam \to \logProd$ to a stratum $V_{\tau}$. A consequence of the construction of $\logProd$ is that $V_{\tau}$ is the moduli space of isomorphism classes of stable $n$-pointed grid expansions of fixed combinatorial type $\tau$, with universal family $\fam_{\tau} \to V_{\tau}$ and $n$ sections $\mathbf{s}_i$. 

\begin{observation}\label{obs:stability}
	There cannot be any non-trivial automorphisms of a stable $n$-pointed grid expansion $(X_{\cal{P}}, p_1, p_2, \dots, p_n)$. Let the combinatorial type be $\tau$. The primitive elements of cone $\tau$ are given by the coordinates $u_i^{(j)}$ of $\Sigma^n$, possibly zero or non-distinct in $\tau$. For each non-zero $u_i^{(j)}$, its image under \[\varphi_{m(i)}\colon\tau \to \sigma_{m(i)}\] is then non-zero. This means that $u_i^{(j)} \otimes \Gm$ acts non-trivially on the fibres of the $(\PP^1)^k$-bundle $Y_{m(i)}$ which contain the points $m^{-1}(m(i))$, and the action on the points is free since the points lie in the torus $\Gm^k$ of the $(\PP^1)^k$-fibres.
\end{observation}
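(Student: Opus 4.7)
The plan is to reduce the statement to a claim about the tropical position maps, which I then verify from the combinatorial construction of $\tau$. By Definition~\ref{lem:pullback_mor} applied with $\psi = \mathrm{id}$, an automorphism of $(X_{\cal{P}}, p_1, \ldots, p_n)$ is a 2-isomorphism $\lambda\colon f \Rightarrow f$ whose induced rubber action $\aut$ preserves every section. By Remark~\ref{rem:rubber_action} and the discussion in Section~\ref{subsection:rubbertori}, such $\lambda$ correspond to elements of the rubber torus $T_\tau = N_\tau \otimes \Gm$, where $\tau$ is the combinatorial type of $\cal{P}$. So the task becomes: show that any $\lambda \in T_\tau$ fixing each $p_i$ is trivial.

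Next, I would translate the fixed-point condition into linear algebra on $N_\tau$. The rubber action of $\lambda$ on the component $Y_{m(i)}$ containing $p_i$ is, by construction in Section~\ref{subsection:rubbertori}, the image $\varphi_{m(i)}(\lambda) \in T_{\sigma_{m(i)}}$ acting on the $(\PP^1)^{k_i}$-fibre through its toric structure. The stability requirement from Definition~\ref{cor:gridexp_over_point} places $p_i$ inside the open orbit $\Gm^{k_i}$ of that fibre, where the $T_{\sigma_{m(i)}}$-action is free. Hence $\aut(p_i) = p_i$ forces $\varphi_{m(i)}(\lambda) = 1$, and the problem reduces to showing $\bigcap_i \ker(\varphi_{m(i)} \otimes \Gm) = 1$, equivalently, on the dual side, that the images $\varphi_{m(i)}^*(M_{\sigma_{m(i)}})$ jointly span $M_\tau$.

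The main step, where the argument requires some care, is to verify this dual spanning statement. I would argue directly from the description of $\tau$ in the proof of Proposition~\ref{prop:tropmoduli_grid}: the cone $\tau \subset \Sigma^n$ is cut out by total-order constraints in each of the $r$ coordinate directions, so $M_\tau$ is generated by the restrictions of the ambient coordinate functions $u_i^{(j)}$. A coordinate $u_i^{(j)}$ restricts to zero on $\tau$ precisely when $m_i(j) = 0$; when non-zero, the ray corresponding to $D_j$ is a face of $\sigma_{m(i)}$, so $u_i^{(j)}$ lies in $\varphi_{m(i)}^*(M_{\sigma_{m(i)}})$ by the very definition of the tropical position map. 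Varying $i$ picks up every non-zero generator of $M_\tau$, yielding the required spanning and completing the proof. The delicate point is the book-keeping: one must correctly identify $M_\tau$ as the quotient of the ambient character lattice by the identifications forced by the total-order equalities, so that generators assemble cleanly and the freeness of the rubber action on $(p_1,\ldots,p_n)$ follows without double-counting.
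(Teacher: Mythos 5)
Your proposal is correct and follows essentially the same route as the paper: both arguments identify automorphisms with rubber-torus elements, use that each $p_i$ sits in the dense torus $\Gm^{k}$ of the fibre of $Y_{m(i)}$ where $T_{\sigma_{m(i)}}$ acts freely, and then observe that the non-zero coordinates $u_i^{(j)}$ are pulled back along the tropical position maps $\varphi_{m(i)}$ and generate the character lattice of $T_\tau$. You have merely written the paper's statement that each non-zero $u_i^{(j)} \otimes \Gm$ acts non-trivially in its equivalent dual form, namely that the images $\varphi_{m(i)}^*(M_{\sigma_{m(i)}})$ span $M_\tau$.
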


\subsection{Blow-up description of $\logProd$}
Let $(X,D)$ be a simple normal crossings pair where $D=D_1 + D_2 + \dots + D_r$. In this subsection, we prove that $\logProd$ can be written as an iterated blow-up. 
\begin{definition}[Dominant transform {\cite[Definition 2.7]{wonderful}}]
	Let $Z$ be a non-singular subscheme of a non-singular scheme $Y$, and let $\pi \colon \bl{Z}{Y} \to Y$ be the blow-up of $Y$ along $Z$. The \emph{dominant transform} of a subscheme $V$ of $Y$, denoted by $\widetilde{V}$, is either
	\begin{itemize}
		\item the strict transform of $V$ if $V \not\subset Z$, or
		\item the total transform $\pi^{-1}(V)$ otherwise.
	\end{itemize}
\end{definition}

Write $Y_0 \colonequals X^n$. For each $k = 1,2, \dots, r$, we consider loci of $n$-tuples 
\[D_{k,I} \colonequals \{(x_1, x_2, \dots, x_n) \in X^n\; |\; x_i \in D_k \;\; \forall i \in I\}.
\]
We denote $Y_k$ as the iterated blow-up of $Y_{k-1}$ in the sequence of centres \begin{equation}\label{seq:logProd}
	\widetilde{D}_{k,\{1\}};\widetilde{D}_{k,\{1,2\}}, \widetilde{D}_{k,\{2\}}; \widetilde{D}_{k,\{1,2,3\}}, \widetilde{D}_{k,\{1,3\}}, \widetilde{D}_{k,\{2,3\}}, \widetilde{D}_{k,\{3\}}; \dots; \widetilde{D}_{k,\{1,2,\dots,n\}}, \dots, \widetilde{D}_{k,\{n\}}, \tag{$*$}
\end{equation}
where $\widetilde{D}_{k,I}$ denotes the dominant transform of $D_{k,I}$ under the existing iterated blow-up. By \cite[Theorem 1]{KS}, the space $Y_1$ coincides with the moduli space $\logProdexample{X}{D_1}{n}$. 

\begin{proposition}\label{prop:blow-up}
	\hfill
	\begin{enumerate}
		\item There exists a Cartesian diagram in the fine and saturated logarithmic category (henceforth called fs-pullback diagram)
		\begin{equation*}
			\begin{tikzcd}
				Y_r \arrow[r] \arrow[d] \fspullback{rd} & \prod_{i=1}^{r} \logProdexample{X}{D_i}{n} \arrow[d]\\
				X^n \arrow[r, "\Delta", hook] & (X^n)^r.
			\end{tikzcd}
		\end{equation*}
		\item The moduli space $\logProd$ coincides with $Y_r$, and $\logProd \to X^n$ is a \emph{log blow-up} (see below).
	\end{enumerate}
\end{proposition}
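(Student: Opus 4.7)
The plan is to treat the two parts in sequence, using a product description of the tropical moduli space together with an inductive application of Kim--Sato.

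For part (1), I would exploit the product structure of $\tropmod$ made explicit in Section~\ref{section:gridsub}. By construction, an $n$-marked grid subdivision of $\Sigma_X$ is literally a product of $n$-marked polyhedral subdivisions of each ray $\Sigma_{(i)}$. Writing $\Pi_n(\Sigma_{(i)})$ for the tropical moduli space associated with the pair $(X, D_i)$, this yields an identification $\tropmod = \prod_{i=1}^{r} \Pi_n(\Sigma_{(i)})$ under $\Sigma_X^n = \prod_i \Sigma_{(i)}^n$, which exhibits $\tropmod$ as the diagonal pullback of $\prod_i \Pi_n(\Sigma_{(i)})$ inside $(\Sigma_X^n)^r$. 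Passing through Theorem~\ref{thm:CCUW} gives the corresponding Cartesian diagram of Artin fans, and pulling back along the strict morphism $X^n \to \A_X^n$ produces the desired diagram of logarithmic schemes. The diagonal $X^n \hookrightarrow (X^n)^r$ is itself strict, since the pullback of $\sum_i D_i$ along the diagonal recovers $D$, so the fs-pullback coincides with the ordinary pullback.

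For part (2), I would proceed by induction on $r$, applying Kim--Sato one divisor at a time. The base case $r = 1$ is \cite[Theorem 1]{KS}. For the inductive step, assume $Y_{r-1}$ agrees with the fs-pullback involving $D_1, \dots, D_{r-1}$. I would then verify that applying the sequence $(\ref{seq:logProd})$ for $k = r$ on top of $Y_{r-1}$ realises the fs-pullback
\[
Y_{r-1} \times^{\mathrm{fs}}_{X^n} \logProdexample{X}{D_r}{n},
\]
which by associativity of fs-pullbacks and part (1) is $\logProd$. Since the centres in $(\ref{seq:logProd})$ for $k = r$ are pulled back from loci on $X^n$ supported over $D_r$, they are logarithmically transverse to the exceptional divisors already introduced by the blow-ups for $k < r$. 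Blowing them up on $Y_{r-1}$ should therefore coincide with pulling back the Kim--Sato blow-up $\logProdexample{X}{D_r}{n} \to X^n$ along $Y_{r-1} \to X^n$.

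The main obstacle is making the transversality claim precise, i.e.\ checking that at each stage of the sequence for $k = r$ the relevant dominant transform inside $Y_{r-1}$ is smooth and behaves toroidally, so that Kim--Sato applies verbatim on $Y_{r-1}$. I would handle this by a local toric calculation around each stratum of $Y_{r-1}$: the morphism $Y_{r-1} \to X^n$ is locally a toric modification implementing the product subdivision $\prod_{i<r}\Pi_n(\Sigma_{(i)})$, so each $\widetilde{D}_{r,I}$ pulls back to a smooth toric subvariety at the moment it is blown up, transverse to the existing exceptional divisors because the rays $\Sigma_{(r)}$ and $\Sigma_{(i)}$ for $i<r$ lie on disjoint coordinate axes of $\Sigma_X$. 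This ensures that the iterated blow-up for $k = r$ implements precisely the subdivision $\Pi_n(\Sigma_{(r)})$ over $Y_{r-1}$, and by the product description in part (1), the resulting scheme is $\logProd$.
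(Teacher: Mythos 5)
Your proposal is correct and is essentially the paper's own argument with the two halves swapped: your ``part (1)'' (the product description of $\tropmod$ over the rays $\Sigma_{(i)}$, passed through Theorem~\ref{thm:CCUW} and pulled back along the strict map $X^n \to \A_X^n$) is exactly the paper's Lemma~\ref{lem:prod_of_smooth_pairs}, which the paper uses to prove part (2); and your ``part (2)'' (induction on $r$, applying Kim--Sato one divisor at a time and checking transversality of the centres for $k=r$ against the earlier exceptional loci by a local toric computation, so that dominant transforms agree with total transforms) is the paper's Lemma~\ref{lem:inductive} together with Lemmas~\ref{lem:fs-pullback} and~\ref{lem:strict_transverse}, which the paper uses to prove part (1). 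One small imprecision: the identification $\Sigma_X^n = \prod_i \Sigma_{(i)}^n$ only holds when every intersection $\cap_{i\in I} D_i$ is nonempty; in general $\Sigma_X^n$ is a proper subcomplex of the product, and $\tropmod$ is the restriction of $\prod_i \Pi_n(\Sigma_{(i)})$ to its support. The paper accounts for this with the open embedding $\gridexp \hookrightarrow \prod_i \cal{E}_n(X|D_i)$ over $\A_X^n \hookrightarrow \prod_i \A_i^n$; your argument survives this correction verbatim, since the strict diagonal factorisation is unaffected.
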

We prove this proposition in several steps. 

Let $X$ be a logarithmically smooth scheme. Consider a logarithmic modification $\widetilde{X} \to X$ induced by a \emph{star subdivision} $\widetilde{\Sigma_X} \to \Sigma_X$ \cite[Definition~3.3.13]{cls}: this subdivision corresponds to toric blow-ups on each toric chart. This is an example of a log blow-up introduced in \cite[Section~1.3.3]{kato}: log blow-ups are induced by subdivisions of $\Sigma_X$ into domains of linearity of a piecewise linear function $\phi$ on $\Sigma_X$ \cite{molcho-pand-schmitt}. 
\begin{remark}
	A log blow-up is a blow-up as the morphism $X \to \A_X$ is strict and flat, and fs-pullbacks along a strict morphism coincide with scheme-theoretic pullbacks.
\end{remark}
We state a lemma where the distinction between a scheme-theoretic pullback and an fs-pullback is crucial.
\begin{lemma}[Log blow-ups are stable under base change] \label{lem:fs-pullback}
	Given a morphism of logarithmic schemes $f \colon Y \to X$, the fs-pullback along $f$ of a logarithmic blow-up $\widetilde{X} \to X$ coincides with the strict transform $\widetilde{Y}$.
	\begin{equation*}
		\begin{tikzcd}
			\widetilde{Y} \arrow[r] \arrow[d] \fspullback{rd} & \widetilde{X}\arrow[d]\\
			Y \arrow[r, "f"] & X
		\end{tikzcd}
	\end{equation*}
\end{lemma}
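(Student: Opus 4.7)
The plan is to trace through the definition of a logarithmic blow-up as an fs-pullback of a toric modification of Artin fans, invoke the pasting law for fs-fibre products, and conclude with an \'etale-local comparison to the classical strict transform.

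First, by definition $\bl{\cal{I}}{X}$ fits into an fs-Cartesian square
\[
\begin{tikzcd}
\bl{\cal{I}}{X} \arrow[r] \arrow[d] \fspullback{rd} & \widetilde{\A_X} \arrow[d]\\
X \arrow[r] & \A_X
\end{tikzcd}
\]
in which $\widetilde{\A_X} \to \A_X$ is, \'etale locally, the toric blow-up along $\cal{I}$. Composing with the fs-pullback along $f\colon Y \to X$ and applying the pasting law for fs-fibre products yields
\[
Y \times_X^{\mathrm{fs}} \bl{\cal{I}}{X} \;=\; Y \times_{\A_X}^{\mathrm{fs}} \widetilde{\A_X} \;=\; Y \times_{\A_Y}^{\mathrm{fs}} \bigl(\A_Y \times_{\A_X}^{\mathrm{fs}} \widetilde{\A_X}\bigr).
\]
The right-hand side is the fs-pullback of a toric modification of Artin fans along the strict morphism $Y \to \A_Y$, which by definition is $\bl{f^{-1}\cal{I}}{Y}$; here $f^{-1}\cal{I}$ is, \'etale locally, the monomial ideal sheaf on $\A_Y$ pulled back from $\cal{I}$ along the induced map $\A_Y \to \A_X$.

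It then remains to identify $\bl{f^{-1}\cal{I}}{Y}$ with the scheme-theoretic strict transform of $\bl{\cal{I}}{X}$ along $f$. Since this is \'etale-local on $Y$, I would reduce to a toric chart with $X = \Spec k[P]$ and $Y = \Spec k[Q]$, with $f$ induced by a monoid homomorphism $P \to Q$ and $\cal{I}$ a monomial ideal. In this setting the log blow-up coincides with the ordinary blow-up, and the fan subdivision presenting $\bl{\cal{I}}{X}$ pulls back to the fan subdivision presenting $\bl{f^{-1}\cal{I}}{Y}$; toric-geometrically, this pulled-back subdivision describes the strict transform rather than the total transform.

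The main obstacle is controlling the embedded components that the scheme-theoretic pullback $Y \times_X \bl{\cal{I}}{X}$ can acquire along the exceptional divisor. I expect to handle this via a Rees-algebra calculation: fs-saturation of the graded monoid governing the Rees algebra of $f^{-1}\cal{I}$ discards precisely the primes supported on the exceptional locus, which is exactly the content of the strict-transform construction. Once the toric-local statement is established, the lemma follows by \'etale descent.
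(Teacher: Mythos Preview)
Your first two paragraphs already constitute the paper's entire proof: define the log blow-up as an fs-pullback from a modification of Artin fans, then apply the pasting law (the paper phrases this at the level of cone complexes, observing that $\widetilde{\Sigma_X}\times_{\Sigma_X}\Sigma_Y$ is a subdivision of $\Sigma_Y$ inducing $\bl{f^{-1}\cal{I}}{Y}$, but this is the same argument). At that point the lemma is finished, because the diagram in the statement has $\bl{f^{-1}\cal{I}}{Y}$ in the upper-left corner, and by the Remark immediately following the definition of logarithmic blow-up, $\bl{f^{-1}\cal{I}}{Y}$ is already an \emph{ordinary} blow-up (the map $Y\to\A_Y$ is strict and flat, so the fs-pullback agrees with the scheme-theoretic one). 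Classically, the blow-up of $Y$ in the pulled-back ideal \emph{is} the strict transform.

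Your third and fourth paragraphs are therefore unnecessary. You are proposing to re-prove, via an \'etale-local toric calculation and a Rees-algebra argument about saturation killing embedded components, what is already packaged in the Remark. There is no gap in your argument, but you are working harder than needed: the ``main obstacle'' you flag---controlling embedded components in the scheme-theoretic pullback---is precisely what the strict/fs distinction handles for you automatically, and the paper simply invokes this rather than unpacking it.
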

\begin{proof}
	This follows from Theorem~4.7 and Corollary~4.8 of \cite{logblowup}. We give another argument in the language of Artin fans. 
	
	This question is \'etale local, so we pass to open neighbourhoods of $X$ and $Y$, and assume that $X$ and $Y$ are \emph{atomic}, i.e. they admit a global toric chart \cite[Definition~2.2.2.2]{logPic}. Under this assumption, functoriality of Artin fans holds by \cite[Lemma~A.18]{lifting_rational} and we have maps $\A_Y \to \A_X$ and $\Sigma_f \colon \Sigma_Y \to \Sigma_X$. We observe that $\widetilde{\Sigma_Y} = \widetilde{\Sigma_X} \times_{\Sigma_X} \Sigma_Y$ is a subdivision of $\Sigma_Y$  inducing a logarithmic modification $\widetilde{Y} \to Y$, i.e.~$\widetilde{Y} = Y \times_{\A_Y} \widetilde{\A_Y}$. This subdivision is induced by the pullback of $\phi$ along $\Sigma_f$, which is a piecewise linear function on $\Sigma_Y$. Therefore, we observe that $\widetilde{Y}$ is the strict transform of $f$ under the log blow-up $\widetilde{X} \to X$. 

	We thus have a commutative diagram in the fs logarithmic category
	\begin{equation*}
		\begin{tikzcd}
			\widetilde{Y} \arrow[r] \arrow[d] & \widetilde{X} \arrow[d] \arrow[r] \fspullback{rd}& \widetilde{\A_X} \arrow[d]\\
			Y \arrow[r, "f"] & X \arrow[r] & \A_X,
		\end{tikzcd}
	\end{equation*}
	where the outer square and the right square are Cartesian by construction, so the left square is also Cartesian.
\end{proof}
We now introduce the notion of \emph{transversality} and a technical result about blow-ups.
\begin{definition}[Transversality]
	Let $Y$ and $Z$ be smooth subvarieties of a smooth variety $X$. Then $Y$ and $Z$ \emph{intersect transversely} if, at every point $p\in X$, 
	\begin{enumerate}
		\item there exists a regular sequence $f_1, f_2, \ldots, f_d \in \mathcal{O}_{X,p}$, where $d = \dim X$, such that $\mathfrak{m}_p = (f_1, f_2, \ldots, f_d)$, and
		\item there exist integers $0 \leq k_1 \leq k_2 \leq d$ such that at $p$, the ideal sheaves are $(\mathcal{I}_Y)_p = (f_1, f_2, \ldots, f_{k_1})$ and $(\mathcal{I}_Z)_p = (f_{k_1+1}, f_{k_1+2}, \ldots, f_{k_2})$.
	\end{enumerate}
\end{definition}

\begin{lemma} \label{lem:strict_transverse}
	Let $Y$ and $Z$ be smooth subvarieties of $X$ intersecting transversely, and consider the blow-up $\bl{Z}{X} \to X$. The strict transform of $Y$ is equal to the total transform.
\end{lemma}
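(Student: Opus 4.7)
The assertion is local on $X$, and away from $Z$ the blow-up morphism is an isomorphism, so it suffices to check the statement in a Zariski (or \'etale) neighborhood of each point $p \in Y \cap Z$. The plan is to pick convenient local coordinates dictated by the transversality hypothesis, describe the blow-up in the standard affine charts, and observe that the defining equations of $Y$ are untouched by the blow-up, so the total transform is already smooth and irreducible.

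Concretely, at $p \in Y \cap Z$ I would invoke the transversality hypothesis to fix a regular system of parameters $f_1, \ldots, f_d \in \mathcal{O}_{X,p}$ with $\mathcal{I}_Y = (f_1, \ldots, f_{k_1})$ and $\mathcal{I}_Z = (f_{k_1+1}, \ldots, f_{k_2})$; the key feature is that the two collections of generators are \emph{disjoint} sub-sequences of the regular system. I would then pass to a completed or \'etale neighborhood to identify $X$ locally with an affine space, so that $\bl{Z}{X}$ is covered by the standard charts of the blow-up of $\mathbb{A}^d$ along the linear subspace $V(f_{k_1+1}, \ldots, f_{k_2})$.

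Each such chart has coordinates in which $f_1, \ldots, f_{k_1}$ appear unchanged as coordinate functions, and in which some single $f_j$ with $k_1 < j \leq k_2$ becomes the defining equation of the exceptional divisor while the remaining $f_{k_1+1}, \ldots, \widehat{f_j}, \ldots, f_{k_2}$ are replaced by new coordinates. In such a chart, the scheme-theoretic total transform $\pi^{-1}(Y)$ is cut out by $(f_1, \ldots, f_{k_1})$, and since these are part of a regular system of parameters on the chart, the total transform is smooth and irreducible of the correct dimension. It cannot contain the exceptional divisor (because none of the $f_i$ with $i \leq k_1$ vanishes on it), so it coincides with the closure of $\pi^{-1}(Y \setminus Z)$, that is, with the strict transform.

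The calculation is routine once the coordinates are set up; the only mild subtlety is to make sure the local identification with a toric model is available, which is supplied by the transversality hypothesis (it provides a regular system of parameters compatible with both $Y$ and $Z$). No deeper input is needed, and the result is in fact a local toric computation in disguise, which also explains why it will feed neatly into the subsequent fs-pullback and Artin-fan arguments used to identify $\logProd$ with the iterated blow-up $Y_r$.
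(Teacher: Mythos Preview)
Your proof is correct and follows essentially the same approach as the paper: both reduce to the linear model on $\mathbb{A}^d$ via the regular system of parameters supplied by transversality, check there that the total transform of the coordinate subspace $V(f_1,\ldots,f_{k_1})$ under the blow-up along $V(f_{k_1+1},\ldots,f_{k_2})$ is already smooth and irreducible, and then transfer the conclusion back to $X$. The only cosmetic difference is that the paper packages the reduction as flat base change along $\Spec\mathcal{O}_{X,p}\to\mathbb{A}^d$, whereas you pass to an \'etale neighborhood and compute directly in the blow-up charts.
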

\begin{proof}
	The regular sequence $f_1, f_2, \ldots, f_d$ gives a flat morphism $\Spec \mathcal{O}_{X,p} \to \AA^d$. Consider subvarieties $V = \mathbb{V}(x_1, x_2, \ldots, x_{k_1})$ and $W = \mathbb{V}(x_{k_1+1}, x_{k_1+2}, \ldots, x_{k_2})$ of $\AA^d$, and it is straightforward to verify that under the blow-up $\bl{W}{\AA^d} \to \AA^d$, the strict transform of $V$ coincides with the total transform. We conclude by flat base change.
\end{proof}

\begin{lemma}\label{lem:inductive}
	Consider the iterated blow-up $\phi_{k-1}\colon Y_{k-1} \to X^n$. For each $k = 1, 2, \dots, r$, there exists an fs-pullback diagram
	\begin{equation*}
		\begin{tikzcd}
			Y_k \arrow[r] \arrow[d] \fspullback{rd} & \logProdexample{X}{D_k}{n} \arrow[d]\\
			Y_{k-1} \arrow[r] & X^n.
		\end{tikzcd}
	\end{equation*}
\end{lemma}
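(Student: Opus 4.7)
\emph{Approach.} My plan is to pull back the tower of blow-ups defining $\logProdexample{X}{D_k}{n} \to X^n$ stage by stage to $Y_{k-1}$ using the iterated form of Lemma \ref{lem:fs-pullback}, and then to identify the pulled-back centers with the dominant transforms appearing in the construction of $Y_k$.

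To start, I would invoke \cite[Theorem 1]{KS} to realise $\logProdexample{X}{D_k}{n}$ as the iterated blow-up of $X^n$ along the loci $D_{k, I}$ and their dominant transforms in the order $(*)$. Since each $D_{k, I}$ is a logarithmic stratum of $X^n$ with the divisorial log structure coming from $D$, each of these blow-ups is a logarithmic blow-up. Forming the fs-pullback of this entire tower along $Y_{k-1} \to X^n$ and repeatedly applying Lemma \ref{lem:fs-pullback} yields a tower $Y_{k-1} = Z_0 \leftarrow Z_1 \leftarrow \cdots \leftarrow Z_N$ in which each step $Z_j \to Z_{j-1}$ is the logarithmic blow-up along the ideal pulled back from the corresponding center in the tower over $X^n$.

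Next, I would show that each such pullback ideal coincides, up to multiplication by an invertible sheaf of ideals supported on exceptional divisors, with the ideal sheaf of the dominant transform $\widetilde{D_{k, I_j}}$ in $Z_{j-1}$. For centers of the form $D_{\ell, I'}$ with $\ell \neq k$ that were blown up to construct $Y_{k-1}$, transversality of $D_k$ and $D_\ell$ (from the snc hypothesis) together with Lemma \ref{lem:strict_transverse} implies that the strict transform of $D_{k, I_j}$ coincides with the total transform up to the appropriate Cartier exceptional divisor. For centers within the $D_k$-tower itself, both constructions apply the same rule (dominant transform equals scheme-theoretic pullback in the containment case), so they match on the nose. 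Because blowing up an ideal is invariant under multiplication by a locally principal ideal, each $Z_j \to Z_{j-1}$ agrees with the $j$-th blow-up in the definition of $Y_k$, giving $Y_k = Z_N$ as required.

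The hard part will be the bookkeeping required to verify the identification of ideals at each step of the tower: one must inductively track the exceptional divisors accumulated from the $D_1, \ldots, D_{k-1}$-blow-ups as well as from the previous $D_k$-blow-ups, and check that the ``extra factor'' between the pulled-back ideal and the ideal of the dominant transform is indeed locally principal at every stage. An inductive argument with careful local computations at the relevant strata, together with the observation that all the centers in play are toric with respect to the Artin fan of $Y_{k-1}$ so that the bookkeeping can be phrased tropically in terms of subdivisions of $\Sigma_{Y_{k-1}}$, should complete the proof.
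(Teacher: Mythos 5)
Your proposal follows essentially the same route as the paper's proof: identify $\logProdexample{X}{D_k}{n}$ as the iterated blow-up from \cite[Theorem 1]{KS}, pull the tower back stage by stage via Lemma~\ref{lem:fs-pullback}, and use transversality of the $D_k$-centres with the centres of $\phi_{k-1}$ (Lemma~\ref{lem:strict_transverse}) to identify the pulled-back centres with the dominant transforms defining $Y_k$. The paper states this more tersely, but the argument is the same, and your extra care about the locally principal discrepancy between pulled-back and strict-transform ideals is exactly the content the paper delegates to Lemma~\ref{lem:strict_transverse}.
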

\begin{proof}
	We note that the logarithmic modification $\logProdexample{X}{D_k}{n} \to X^n$ is induced by the subdivision $\Pi_n(\Sigma_{(k)}) \to \Sigma_{(k)}^n$ where $\Sigma_{(k)}$ is the tropicalisation of the smooth divisor pair $(X,D_k)$. We can verify that this subdivision is the star subdivision corresponding to the iterated blow-up of $X^n$ in the sequence of strata (\ref{seq:logProd}) given in \cite[Theorem 1]{KS}, hence $\logProdexample{X}{D_k}{n} \to X^n$ is a log blow-up.

	Let $Y'_k$ denote the fs-pullback.
	Lemma~\ref{lem:fs-pullback} implies that $Y'_k$ is the iterated blow-up of $Y_{k-1}$ in the sequence of centres where every term $\widetilde{D}_{k,I}$ in the sequence (\ref{seq:logProd}) is replaced by the total transform $\phi_{k-1}^{-1}(\widetilde{D}_{k,I})$. The blow-up centres of $\phi_{k-1}$ are the dominant transforms of $D_{i,J}$ where $1 \leq i \leq k-1$, but as the intersections of divisors $D_i$ and $D_k$ are transverse, therefore the intersections of $D_{i,J}$ and $D_{k,I}$ are transverse, and so are the intersections of their dominant transforms \cite[Lemma~2.9(v)]{wonderful}. Therefore, by Lemma~\ref{lem:strict_transverse}, under the blow-up $\phi_{k-1} \to X^n$, the total transform $\phi_{k-1}^{-1}(\widetilde{D}_{k,I})$ is equal to the strict transform $\widetilde{D}_{k,I}$. We conclude that $Y'_k = Y_k$.
\end{proof}
\begin{proof}[Proof of Proposition~\ref{prop:blow-up}] 
	\hfill
	\begin{enumerate}
		\item We prove by induction that for all $k = 1, 2, \dots, r$, there exists a fs-pullback diagram
		\begin{equation*}
			\begin{tikzcd}
				Y_k \arrow[r] \arrow[d] \fspullback{rd} & \logProdexample{X}{D_1}{n} \times \dots\times \logProdexample{X}{D_{k-1}}{n} \times \logProdexample{X}{D_k}{n} \times X^n \times \dots X^n \arrow[d]\\
				X^n \arrow[r, "\Delta", hook] & (X^n)^r.
			\end{tikzcd}
		\end{equation*}
		But both the base case $k=1$ and the inductive step follow from Lemma~\ref{lem:inductive}.
		\item This follows from Lemma~\ref{lem:prod_of_smooth_pairs}, and the fact that log blow-ups are stable under base change (Lemma~\ref{lem:fs-pullback}).
	\end{enumerate}
	
\end{proof}

\begin{lemma} \label{lem:prod_of_smooth_pairs}
	There exists a Cartesian diagram in the category of schemes and in the fine and saturated logarithmic category
	\begin{equation}\label{cd:prod_of_smooth}
		\begin{tikzcd}
			\logProd \arrow[r] \arrow[d] \pullback{rd} & \prod_{i=1}^{r} \logProdexample{X}{D_i}{n} \arrow[d]\\
			X^n \arrow[r, "\Delta", hook] & (X^n)^r.
		\end{tikzcd}
	\end{equation}
\end{lemma}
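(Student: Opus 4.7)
The strategy is to upgrade the product structure of grid subdivisions from the tropical to the Artin-fan level, and then chase Cartesian squares. First I would establish the key combinatorial observation: by the very definition of a grid subdivision $\cal{P} = \prod_i \cal{P}_i$ and the compatibility $\mathrm{pr}_i \circ m = m_i$ of markings, an $n$-marked grid subdivision of $\Sigma_X$ is the same data as an $r$-tuple of $n$-marked subdivisions of the rays $\Sigma_{(i)}$. Consequently, the subdivision $\tropmod \to \S_X^n$ produced in Proposition~\ref{prop:tropmoduli_grid} is the restriction to $\S_X^n$ of the product $\prod_{i=1}^r \bigl(\Pi_n(\S_{(i)}) \to \S_{(i)}^n\bigr)$ of the analogous tropical moduli for each smooth pair $(X,D_i)$.

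Passing to Artin fans via Theorem~\ref{thm:CCUW}, writing $\mathcal{A}_{X,i} = [\mathbb{A}^1/\Gm]$ for the Artin fan of $(X,D_i)$, this yields a Cartesian square in the $2$-category of Artin fans
\begin{equation*}
    \begin{tikzcd}
        \gridexp \arrow[r] \arrow[d] \pullback{rd} & \prod_{i=1}^r \mathcal{E}_n(X|D_i) \arrow[d]\\
        \mathcal{A}_X^n \arrow[r] & \prod_{i=1}^r \mathcal{A}_{X,i}^n,
    \end{tikzcd}
\end{equation*}
where the bottom map is induced by the open immersion $\mathcal{A}_X \hookrightarrow \prod_i \mathcal{A}_{X,i}$. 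I would then paste this onto the defining square (\ref{cd:logprod}) for $\logProd$, observing that the composition $X^n \to \mathcal{A}_X^n \to \prod_i \mathcal{A}_{X,i}^n$ factors through the diagonal $\Delta\colon X^n \hookrightarrow (X^n)^r$ followed by $(X^n)^r \to \prod_i \mathcal{A}_{X,i}^n$. For each $i$ the square defining $\logProdexample{X}{D_i}{n}$ is Cartesian, so pulling back along $(X^n)^r \to \prod_i \mathcal{A}_{X,i}^n$ produces the product $\prod_i \logProdexample{X}{D_i}{n}$. Iteratively pasting Cartesian squares then gives exactly \[\logProd \;=\; X^n \times_{(X^n)^r} \prod_{i=1}^r \logProdexample{X}{D_i}{n},\] which is Diagram~(\ref{cd:prod_of_smooth}).

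For the assertion that this square is Cartesian in both the scheme-theoretic and fine and saturated logarithmic categories, I would check strictness along the bottom: the maps $X \to \mathcal{A}_{X,i}$ are strict by construction of Artin fans, and $\Delta\colon X^n \to (X^n)^r$ is strict because the log structure on $X^n$ coming from $D = \sum_i D_i$ agrees with the restriction of the product log structure on $(X^n)^r$ along the diagonal. Since fs-pullback and scheme-theoretic pullback coincide along strict morphisms (as already used in Diagram~\ref{cd:logprod} and in Lemma~\ref{lem:fs-pullback}), the Cartesian diagram holds in both categories.

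The main obstacle I anticipate is the bookkeeping around the distinction between $\mathcal{A}_X^n$ and $\prod_i \mathcal{A}_{X,i}^n$: when some intersections $\cap_{i \in I} D_i$ are empty, $\mathcal{A}_X$ is a proper open substack of $\prod_i \mathcal{A}_{X,i}$, and one must check that the subdivision on $\prod_i \mathcal{A}_{X,i}^n$ obtained by a product of smooth-pair subdivisions restricts correctly to the subdivision $\gridexp \to \mathcal{A}_X^n$. This is where the combinatorial input of step one is used most essentially: the cones appearing in $\tropmod$ are, by construction, precisely those cones in $\prod_i \Pi_n(\Sigma_{(i)})$ whose image in $\prod_i \Sigma_{(i)}^n$ lies in $\Sigma_X^n$, so the open-immersion issue is absorbed by an open base change and does not affect the Cartesian property.
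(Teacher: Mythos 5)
Your proof is correct and follows essentially the same route as the paper's: both establish the Cartesian square $\gridexp \cong \A_X^n \times_{\prod_i \A_i^n} \prod_i \cal{E}_n(X|D_i)$ at the level of Artin fans (coming from the product structure of marked grid subdivisions) and then paste it with the defining square of $\logProd$, using the factorisation of the strict morphism $X^n \to \prod_i \A_i^n$ through the diagonal $\Delta\colon X^n \hookrightarrow (X^n)^r$. The only difference is that you spell out the combinatorial justification of the first square and the strictness checks, which the paper leaves implicit.
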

\begin{proof}
	Let $\Sigma_i = \Rgeq$ be the tropicalisation of the smooth pair $(X,D_i)$, with Artin fan $\A_i = [\AA^1 / \Gm]$. Observe that we have open embeddings
	\begin{equation*}
		\begin{tikzcd}
			\gridexp \arrow[r,hook] \arrow[d] \pullback{rd} &\prod_{i=1}^{r}\cal{E}_n(X|D_i) \arrow[d]\\
			\A_X^n \arrow[r, hook] & \prod_{i=1}^{r} \A_i^n.
		\end{tikzcd}
	\end{equation*}
	The strict morphism $X^n \to \A_X^n \hookrightarrow\prod_{i=1}^{r} \A_i^n$ factors through $X^n \xhookrightarrow{\Delta} (X^n)^r$, giving a commutative diagram
	\begin{equation*}
		\begin{tikzcd}
			\logProd \arrow[r] \arrow[d] & \prod_{i=1}^{r}\logProdexample{X}{D_i}{n} \arrow[r] \arrow[d] \pullback{rd} & \prod_{i=1}^{r}\cal{E}_n(X|D_i) \arrow[d]\\
			X^n \arrow[r, "\Delta", hook] & (X^n)^r \arrow[r, "\text{strict}"] & \prod_{i=1}^{r} \A_i^n
		\end{tikzcd}
	\end{equation*}
	where both the outer and the right squares are Cartesian, thus we obtain the desired Cartesian diagram.
\end{proof}

\subsection{Examples} 
\subsubsection{Cases where $n$ is small}
  \begin{itemize}
  	\item $n=1$: For any simple normal crossings pair $(X,D)$, we have $\logProdexample{X}{D}{1} = X$.
  	\item $n=2$: For a smooth pair $(X,D)$, we have $\logProdexample{X}{D}{2} = \bl{D_{1,2}}{X^2}$, where $D_{1,2}$ is the locus where both points are on $D$.
  \end{itemize}
  
\subsubsection{Losev--Manin moduli spaces  and the permutohedral variety  }\label{section:losev_manin}
When $X = \PP^1$ and $D = 0 + \infty$, the construction of $\logProd$ recovers the Losev--Manin moduli space of $n+1$ points \cite{LM}, the $n$-dimensional permutohedral variety and the dimension one toric configuration space of $n$ points (see \cite[Section 3]{toric_conf}).

\section{Logarithmic Fulton--MacPherson compactification} \label{section:logFM}
In this section, we will construct a space $\logFM$ parametrising Fulton--MacPherson-type degenerations of the stable $n$-pointed grid expansions, separating the $n$ points. 

\subsection{Stable $n$-pointed FM grid expansions}
\begin{definition}[{cf. \cite[p. 194]{FM}}] \label{def:FMgridexp}
	An \emph{FM grid expansion} is an iterative construction $X_{\cal{P}}^{\text{FM}} \colonequals X_{\cal{P}}^{(N)}$ with finite steps, where $X_{\cal{P}}^{(0)}$ is some grid expansion $X_{\cal{P}}$ and at each step, \[X_{\cal{P}}^{(k)} \colonequals \bl{x}{X_{\cal{P}}^{(k-1)}} \coprod_{\PP(T_x X_{\cal{P}}^{(k-1)})} \PP(T_x X_{\cal{P}}^{(k-1)} \oplus \mathbbm{1}),\] where $x$ is a point on the smooth locus of $X_{\cal{P}}^{(k-1)}$. 
	
	An \emph{$n$-pointed FM grid expansion} is the data $(X_{\cal{P}}^{\text{FM}}, p_1, p_2, \ldots, p_n)$ where $p_1, p_2, \dots, p_n$ are distinct points on the smooth locus of $X_{\cal{P}}^{\text{FM}}$. 
	
	It is said to be \emph{stable} if
	\begin{itemize}
		\item $(X_{\cal{P}}^{\text{FM}}, p_1, p_2, \ldots, p_n)$ maps to a stable $n$-pointed grid expansion $(X_{\cal{P}}^{(0)},p_1', p_2', \ldots, p_n')$,
		\item every component of $X_{\cal{P}}^{\text{FM}}$, apart from those mapping birationally onto a component of the grid expansion $X_{\cal{P}}$, contains at least three \emph{markings}, consisting of marked points or intersections with other components along exceptional divisors. 
	\end{itemize}
\end{definition}

See Figures~\ref{fig:3-FMexp} and \ref{fig:4-FMexp} for examples. We recover the definition in \cite[p. 194]{FM} if we take the trivial grid expansion $X_{\cal{P}} = X$.

\begin{definition}\label{def:forest}
	Given a stable $n$-pointed FM grid expansion $(X_{\mathcal{P}}^{\text{FM}}, p_1, p_2, \ldots, p_n)$, let the vertex set of $\mathcal{P}$ be $V(\mathcal{P}) = \{v_1, v_2, \ldots,  v_k\}$.
	The \emph{combinatorial type} of $(X_{\mathcal{P}}^{\text{FM}}, p_1, p_2, \ldots, p_n)$ is a tuple $(\F \to \F_{\S}, m, \T_{v_1}, \dots, \T_{v_k})$, where $(\F \to \F_{\S}, m)$ is the combinatorial type of $X_{\mathcal{P}}$, and each $\T_{v_i}$ is a rooted tree defined as follows:
	\begin{itemize}
		\item \textit{Vertices:} The tree $\T_{v_i}$ has a vertex for each component of $X_{\mathcal{P}}^{\text{FM}}$ which contracts to the component of $X_{\mathcal{P}}$ corresponding to $v_i$. The \emph{root vertex} of $\T_{v_i}$ is $v_i$.
		\item \textit{Edges:} Two vertices share an edge if the corresponding components intersect.
		\item \textit{Legs:} For each
		 vertex $v$ of $\T_{v_i}$, attach a leg to $v$ for each marked point $p_j$ contained in the component of $X_{\mathcal{P}}^{\text{FM}}$ corresponding to $v$.
	\end{itemize}
	
\end{definition}

Following \cite[Section 2.5]{stable_ramified}, we consider the automorphisms of FM grid expansions $\psi\colon  X_{\cal{P}}^{(N)} \to X_{\cal{P}}^{(N)}$ without points. On each component corresponding to a root vertex in its combinatorial type, which we call a \emph{root component}, its automorphisms $\psi$ are lifts of the rubber actions. Each component whose vertex is non-root and of valence 1, is called an \emph{end component} and is of the form $\PP(T_x X_{\cal{P}}^{(k)} \oplus \mathbbm{1})$. Its automorphism group fixing the hyperplane at infinity is $\Ga^d \rtimes \Gm$, where $d$ is the dimension of $X$ and $\Ga$ is the additive group. Each component whose vertex is non-root and of valence 2 is called a \emph{ruled component}, which is a ruled variety of the form $\bl{x}{\PP(T_y X_{\cal{P}}^{(k-1)} \oplus \mathbbm{1})}$. Its automorphism group fixing the intersections with the other two components is $\Gm$, which acts by scaling the family of lines.

\begin{definition}
	An \emph{isomorphism of stable $n$-pointed FM grid expansions} \[\psi\colon  (X_{\cal{P}}^{\text{FM}}, p_1, p_2, \dots, p_n) \to (X_{\cal{P}}^{\text{FM}}, p_1', p_2', \dots, p_n')\] is an automorphism $\psi$ of FM grid expansions sending $p_i$ to $p_i'$.
\end{definition}

We see that the stability condition guarantees that there are no automorphisms of stable $n$-pointed FM grid expansions.

\begin{remark}
	The stability condition for $n$-pointed FM grid expansions can be restated in terms of trees -- an $n$-pointed grid expansion is stable if and only if every rooted tree in the combinatorial type is at least 3-valent, except at the root vertices.
\end{remark}

We define a tropical analogue of stable $n$-pointed FM grid expansions, and its combinatorial type.
\begin{definition}
	A \emph{planted forest} is a tuple $(\cal{P}, m, \T^{\mathrm{m}}_{v_1}, \ldots, \T^{\mathrm{m}}_{v_k})$. The pair $(\cal{P}, m)$ is an $n$-marked grid subdivision, and $v_1, v_2, \ldots, v_k$ are not necessarily distinct vertices of $\cal{P}$. Each $\T^{\mathrm{m}}_{v_i}$ is a rooted metric tree with legs and root $v_i$, such that the legs on $\T^{\mathrm{m}}_{v_i}$ are in bijection with markings in $m(v_i)$, and every vertex apart from the root is at least 3-valent.
\end{definition}
\begin{definition}
	The \emph{combinatorial type} of a planted forest $(\cal{P}, m, \T^{\mathrm{m}}_{v_1}, \ldots, \T^{\mathrm{m}}_{v_k})$ is $(\F \to \F_{\S}, m, \T_{v_1}, \ldots, \T_{v_k})$, where each $\T_{v_i}$ is obtained by forgetting the metric on $\T^{\mathrm{m}}_{v_i}$.
\end{definition}

\begin{remark} \label{rem:FMexp}
	We can extend all definitions in this subsection to other types of expansions. This will be useful in Section~\ref{section:degeneration}.
\end{remark}
%

Below are some examples of planted forests:

\begin{figure}[H]
	\begin{minipage}{.5\textwidth}
		\centering
		\tikzset{every picture/.style={line width=0.75pt}} 
		
		\tikzset{every picture/.style={line width=0.75pt}} 

		\tikzset{every picture/.style={line width=0.75pt}} 
		
		\begin{tikzpicture}[x=0.75pt,y=0.75pt,yscale=-1,xscale=1]
			
			\draw  (419,660.09) -- (519,660.09)(480.96,570.09) -- (423.23,670.09) (514.89,655.09) -- (519,660.09) -- (509.11,665.09) (471.92,577.09) -- (480.96,570.09) -- (481.92,577.09)  ;
			\draw   (425.89,657.37) -- (431.35,662.97)(431.42,657.44) -- (425.82,662.9) ;
			\draw   (465.89,657.37) -- (471.35,662.97)(471.42,657.44) -- (465.82,662.9) ;
			\draw    (521.99,572.62) -- (469.45,659.82) ;
			\draw [shift={(523.02,570.91)}, rotate = 121.07] [color={rgb, 255:red, 0; green, 0; blue, 0 }  ][line width=0.75]    (10.93,-3.29) .. controls (6.95,-1.4) and (3.31,-0.3) .. (0,0) .. controls (3.31,0.3) and (6.95,1.4) .. (10.93,3.29)   ;
			\draw [color={rgb, 255:red, 0; green, 100; blue, 0 }  ,draw opacity=1 ][line width=1.5]    (459.45,647.91) -- (469.45,659.82) ;
			\draw [color={rgb, 255:red, 0; green, 100; blue, 0 }  ,draw opacity=1 ][line width=1.5]    (465.45,647.91) -- (469.45,659.82) ;
			\draw [color={rgb, 255:red, 0; green, 100; blue, 0 }  ,draw opacity=1 ][line width=1.5]    (472.45,647.91) -- (469.45,659.82) ;
			\draw [color={rgb, 255:red, 0; green, 100; blue, 0 }  ,draw opacity=1 ][line width=1.5]    (479.45,647.91) -- (469.45,659.82) ;
			\draw [color={rgb, 255:red, 0; green, 100; blue, 0 }  ,draw opacity=1 ][line width=1.5]    (421.45,650.91) -- (429,660.09) ;
			\draw [color={rgb, 255:red, 0; green, 100; blue, 0 }  ,draw opacity=1 ][line width=1.5]    (439.45,650.18) -- (429,660.09) ;
			\draw [color={rgb, 255:red, 0; green, 100; blue, 0 }  ,draw opacity=1 ][line width=1.5]    (428.45,648.91) -- (429,660.09) ;
			\draw  [draw opacity=0][fill={rgb, 255:red, 80; green, 227; blue, 194 }  ,fill opacity=0.28 ] (481.03,569.96) -- (569.49,569.96) -- (517.46,660.09) -- (429,660.09) -- cycle ;
			
			\draw (461,663) node [anchor=north west][inner sep=0.75pt]   [align=left] {$\displaystyle I$};
			\draw (413,661) node [anchor=north west][inner sep=0.75pt]   [align=left] {$\displaystyle I^{c}$};
			\draw (529,650.09) node [anchor=north west][inner sep=0.75pt]   [align=left] {$\displaystyle D_{k}$};
			\draw (436,546.09) node [anchor=north west][inner sep=0.75pt]   [align=left] {Other divisors};

		\end{tikzpicture}

		\caption{A planted forest where points with labels in $I$ lie on an expansion along $D_k$.}
		\label{fig:forest_div_exp}
	\end{minipage}%
	\begin{minipage}{.5\textwidth}
		\centering

	\tikzset{every picture/.style={line width=0.75pt}} 
	
	\begin{tikzpicture}[x=0.75pt,y=0.75pt,yscale=-1,xscale=1]
		
		\draw  (261,819.73) -- (361,819.73)(322.96,729.73) -- (265.23,829.73) (356.89,814.73) -- (361,819.73) -- (351.11,824.73) (313.92,736.73) -- (322.96,729.73) -- (323.92,736.73)  ;
		\draw   (267.89,817.01) -- (273.35,822.61)(273.42,817.08) -- (267.82,822.54) ;
		\draw [color={rgb, 255:red, 0; green, 100; blue, 0 }  ,draw opacity=1 ][line width=1.5]    (271,807.55) -- (271,819.73) ;
		\draw [color={rgb, 255:red, 0; green, 100; blue, 0 }  ,draw opacity=1 ][line width=1.5]    (264.45,798.55) -- (270.45,807.55) ;
		\draw [color={rgb, 255:red, 0; green, 100; blue, 0 }  ,draw opacity=1 ][line width=1.5]    (277,797.73) -- (270.45,807.55) ;
		\draw [color={rgb, 255:red, 0; green, 100; blue, 0 }  ,draw opacity=1 ][line width=1.5]    (255.45,799.55) -- (270.45,807.55) ;
		\draw [color={rgb, 255:red, 0; green, 100; blue, 0 }  ,draw opacity=1 ][line width=1.5]    (270.45,807.55) -- (284.45,799.55) ;
		\draw [color={rgb, 255:red, 0; green, 100; blue, 0 }  ,draw opacity=1 ][line width=1.5]    (264.45,810.55) -- (271,819.73) ;
		\draw [color={rgb, 255:red, 0; green, 100; blue, 0 }  ,draw opacity=1 ][line width=1.5]    (282.45,812.18) -- (271,819.73) ;
		\draw [color={rgb, 255:red, 0; green, 100; blue, 0 }  ,draw opacity=1 ][line width=1.5]    (258.45,812.55) -- (271,819.73) ;
		\draw  [draw opacity=0][fill={rgb, 255:red, 80; green, 227; blue, 194 }  ,fill opacity=0.28 ] (323.03,729.6) -- (411.49,729.6) -- (359.46,819.73) -- (271,819.73) -- cycle ;
		
		\draw (258,776.73) node [anchor=north west][inner sep=0.75pt]    {$J$};
		\draw (242,805.73) node [anchor=north west][inner sep=0.75pt]    {$J^{c}$};

	\end{tikzpicture}
		\caption{A planted forest where all points are on $X \backslash D$, and points with labels in $J$ are on an FM degeneration.}
		\label{fig:forest_div_fm}
	\end{minipage}
	
\end{figure}

Below are some examples of stable $n$-pointed Fulton--MacPherson grid expansions with their combinatorial types:
\begin{figure}[H]
	\centering

	\tikzset{every picture/.style={line width=0.75pt}} 
	
	\begin{tikzpicture}[x=0.75pt,y=0.75pt,yscale=-1,xscale=1]
		
		\draw  (41.89,116.6) -- (141.89,116.6)(103.85,26.6) -- (46.12,126.6) (137.78,111.6) -- (141.89,116.6) -- (132,121.6) (94.81,33.6) -- (103.85,26.6) -- (104.81,33.6)  ;
		\draw  [draw opacity=0][fill={rgb, 255:red, 80; green, 227; blue, 194 }  ,fill opacity=0.31 ] (104.28,25.85) -- (193.86,25.85) -- (141.47,116.6) -- (51.89,116.6) -- cycle ;
		
		\draw    (140.83,29.42) -- (90.91,115.88) ;
		
		\draw [color={rgb, 255:red, 0; green, 100; blue, 0 }  ,draw opacity=1 ][line width=1.5]    (91,98.28) -- (91,116.28) ;
		\draw [color={rgb, 255:red, 0; green, 100; blue, 0 }  ,draw opacity=1 ][line width=1.5]    (80.91,88.19) -- (91,98.28) ;
		\draw [color={rgb, 255:red, 0; green, 100; blue, 0 }  ,draw opacity=1 ][line width=1.5]    (91,98.28) -- (101.4,87.88) ;
		\draw   (250,91.09) .. controls (250,70.06) and (281.21,53) .. (319.7,53) .. controls (358.19,53) and (389.4,70.06) .. (389.4,91.09) .. controls (389.4,112.13) and (358.19,129.19) .. (319.7,129.19) .. controls (281.21,129.19) and (250,112.13) .. (250,91.09) -- cycle ;
		\draw [color={rgb, 255:red, 0; green, 0; blue, 0 }  ,draw opacity=1 ]   (277.4,75.19) .. controls (325.4,66.19) and (320.4,89.19) .. (363.4,74.19) ;
		\draw  [color={rgb, 255:red, 0; green, 0; blue, 0 }  ,draw opacity=1 ][fill={rgb, 255:red, 126; green, 211; blue, 33 }  ,fill opacity=0.3 ] (380.93,64) .. controls (379.09,64) and (377.6,70.51) .. (377.6,78.55) .. controls (377.6,86.58) and (379.09,93.09) .. (380.93,93.09) .. controls (382.76,93.09) and (384.25,99.61) .. (384.25,107.64) .. controls (384.25,115.67) and (382.76,122.19) .. (380.93,122.19) -- (354.33,122.19) .. controls (356.16,122.19) and (357.65,115.67) .. (357.65,107.64) .. controls (357.65,99.61) and (356.16,93.09) .. (354.33,93.09) .. controls (352.49,93.09) and (351,86.58) .. (351,78.55) .. controls (351,70.51) and (352.49,64) .. (354.33,64) -- cycle ;
		\draw   (310.89,93.28) -- (316.35,98.88)(316.42,93.35) -- (310.82,98.81) ;
		\draw  [fill={rgb, 255:red, 0; green, 0; blue, 0 }  ,fill opacity=1 ] (365.81,91.09) .. controls (365.81,89.94) and (366.75,89) .. (367.91,89) .. controls (369.06,89) and (370,89.94) .. (370,91.09) .. controls (370,92.25) and (369.06,93.19) .. (367.91,93.19) .. controls (366.75,93.19) and (365.81,92.25) .. (365.81,91.09) -- cycle ;
		\draw   (88.49,114.28) -- (93.95,119.88)(94.02,114.35) -- (88.42,119.81) ;
		\draw   (48.49,114.28) -- (53.95,119.88)(54.02,114.35) -- (48.42,119.81) ;
		\draw  [fill={rgb, 255:red, 74; green, 144; blue, 226 }  ,fill opacity=0.7 ] (385.52,43.79) .. controls (387.99,40.78) and (392.42,40.36) .. (395.43,42.83) -- (411.73,56.25) .. controls (414.73,58.72) and (415.16,63.16) .. (412.69,66.16) -- (382.17,103.22) .. controls (382.17,103.22) and (382.17,103.22) .. (382.17,103.22) -- (355,80.85) .. controls (355,80.85) and (355,80.85) .. (355,80.85) -- cycle ;
		\draw   (384.96,59.21) -- (390.56,64.95)(390.63,59.28) -- (384.89,64.88) ;
		\draw   (375.96,76.21) -- (381.56,81.95)(381.63,76.28) -- (375.89,81.88) ;
		
		\draw (40,119) node [anchor=north west][inner sep=0.75pt]  [font=\small]  {$1$};
		\draw (73.2,69.2) node [anchor=north west][inner sep=0.75pt]  [font=\small]  {$2$};
		\draw (97.6,69.2) node [anchor=north west][inner sep=0.75pt]  [font=\small]  {$3$};
		\draw (307,99) node [anchor=north west][inner sep=0.75pt]  [font=\small]  {$1$};
		\draw (381,42) node [anchor=north west][inner sep=0.75pt]  [font=\small]  {$2$};
		\draw (382,72) node [anchor=north west][inner sep=0.75pt]  [font=\small]  {$3$};
		\draw (340,126) node [anchor=north west][inner sep=0.75pt]  [font=\small]  {$D_{1}$};
		\draw (256,76) node [anchor=north west][inner sep=0.75pt]  [font=\small]  {$D_{2}$};
		\draw (359,86) node [anchor=north west][inner sep=0.75pt]  [font=\small]  {$p$};
		\draw (407,39) node [anchor=north west][inner sep=0.75pt]  [font=\small]  {$T_{p} N_{D_{1}} X$};
		\draw (380.93,122.19) node [anchor=north west][inner sep=0.75pt]  [font=\small]  {$N_{D_{1}} X$};
		\draw (243,36) node [anchor=north west][inner sep=0.75pt]    {$X$};
		\draw (146,106.5) node [anchor=north west][inner sep=0.75pt]    {$D_{1}$};
		\draw (92.5,1.5) node [anchor=north west][inner sep=0.75pt]    {$D_{2}$};

	\end{tikzpicture}
	\caption{An example of a stable 3-pointed FM grid expansion of $(\PP^2 | D_1 + D_2)$}
	\label{fig:3-FMexp}
\end{figure}

\begin{figure}[H]
	\centering

	\tikzset{every picture/.style={line width=0.75pt}} 
	
	\begin{tikzpicture}[x=0.75pt,y=0.75pt,yscale=-1,xscale=1]
		
		\draw  [draw opacity=0][fill={rgb, 255:red, 80; green, 227; blue, 194 }  ,fill opacity=0.28 ] (115.94,1042.55) -- (204.4,1042.55) -- (152.36,1132.68) -- (63.91,1132.68) -- cycle ;
		\draw  (53.91,1132.68) -- (153.91,1132.68)(115.87,1042.68) -- (58.13,1142.68) (149.79,1127.68) -- (153.91,1132.68) -- (144.02,1137.68) (106.83,1049.68) -- (115.87,1042.68) -- (116.83,1049.68)  ;
		\draw    (153.06,1045.82) -- (103.03,1132.46) ;
		
		\draw   (60.89,1129.47) -- (66.35,1135.06)(66.42,1129.53) -- (60.82,1134.99) ;
		\draw [color={rgb, 255:red, 0; green, 100; blue, 0 }  ,draw opacity=1 ][line width=1.5]    (103,1113.46) -- (103,1131.46) ;
		\draw [color={rgb, 255:red, 0; green, 100; blue, 0 }  ,draw opacity=1 ][line width=1.5]    (92.91,1103.37) -- (103,1113.46) ;
		\draw [color={rgb, 255:red, 0; green, 100; blue, 0 }  ,draw opacity=1 ][line width=1.5]    (103,1113.46) -- (113.4,1103.06) ;
		\draw   (100.89,1129.47) -- (106.35,1135.06)(106.42,1129.53) -- (100.82,1134.99) ;
		\draw [color={rgb, 255:red, 0; green, 100; blue, 0 }  ,draw opacity=1 ][line width=1.5]    (103.31,1092.97) -- (113.4,1103.06) ;
		\draw [color={rgb, 255:red, 0; green, 100; blue, 0 }  ,draw opacity=1 ][line width=1.5]    (113.4,1103.06) -- (123.8,1092.66) ;
		\draw   (215.27,1087.11) .. controls (215.27,1066.07) and (246.47,1049.02) .. (284.97,1049.02) .. controls (323.46,1049.02) and (354.66,1066.07) .. (354.66,1087.11) .. controls (354.66,1108.15) and (323.46,1125.2) .. (284.97,1125.2) .. controls (246.47,1125.2) and (215.27,1108.15) .. (215.27,1087.11) -- cycle ;
		\draw [color={rgb, 255:red, 0; green, 0; blue, 0 }  ,draw opacity=1 ]   (242.66,1071.2) .. controls (290.66,1062.2) and (285.66,1085.2) .. (328.66,1070.2) ;
		\draw  [color={rgb, 255:red, 0; green, 0; blue, 0 }  ,draw opacity=1 ][fill={rgb, 255:red, 126; green, 211; blue, 33 }  ,fill opacity=0.3 ] (346.19,1060.02) .. controls (344.36,1060.02) and (342.87,1066.53) .. (342.87,1074.56) .. controls (342.87,1082.6) and (344.36,1089.11) .. (346.19,1089.11) .. controls (348.03,1089.11) and (349.52,1095.62) .. (349.52,1103.66) .. controls (349.52,1111.69) and (348.03,1118.2) .. (346.19,1118.2) -- (319.59,1118.2) .. controls (321.43,1118.2) and (322.92,1111.69) .. (322.92,1103.66) .. controls (322.92,1095.62) and (321.43,1089.11) .. (319.59,1089.11) .. controls (317.76,1089.11) and (316.27,1082.6) .. (316.27,1074.56) .. controls (316.27,1066.53) and (317.76,1060.02) .. (319.59,1060.02) -- cycle ;
		\draw   (276.16,1089.3) -- (281.62,1094.9)(281.68,1089.37) -- (276.09,1094.83) ;
		\draw  [fill={rgb, 255:red, 0; green, 0; blue, 0 }  ,fill opacity=1 ] (331.08,1087.11) .. controls (331.08,1085.95) and (332.02,1085.02) .. (333.17,1085.02) .. controls (334.33,1085.02) and (335.27,1085.95) .. (335.27,1087.11) .. controls (335.27,1088.27) and (334.33,1089.2) .. (333.17,1089.2) .. controls (332.02,1089.2) and (331.08,1088.27) .. (331.08,1087.11) -- cycle ;
		\draw  [fill={rgb, 255:red, 74; green, 144; blue, 226 }  ,fill opacity=0.61 ] (350.78,1039.8) .. controls (353.25,1036.8) and (357.69,1036.37) .. (360.69,1038.84) -- (377,1052.27) .. controls (380,1054.74) and (380.43,1059.18) .. (377.96,1062.18) -- (347.44,1099.24) .. controls (347.44,1099.24) and (347.44,1099.24) .. (347.44,1099.24) -- (320.27,1076.86) .. controls (320.27,1076.86) and (320.27,1076.86) .. (320.27,1076.86) -- cycle ;
		\draw   (350.23,1055.23) -- (355.83,1060.97)(355.9,1055.3) -- (350.16,1060.9) ;
		\draw  [fill={rgb, 255:red, 74; green, 144; blue, 226 }  ,fill opacity=0.79 ] (399.23,1080.74) .. controls (403.07,1083.59) and (403.86,1089.02) .. (401.01,1092.85) -- (385.51,1113.69) .. controls (382.66,1117.53) and (377.24,1118.32) .. (373.4,1115.47) -- (335.27,1087.11) .. controls (335.27,1087.11) and (335.27,1087.11) .. (335.27,1087.11) -- (361.1,1052.38) .. controls (361.1,1052.38) and (361.1,1052.38) .. (361.1,1052.38) -- cycle ;
		\draw   (356.83,1077.59) -- (362.43,1083.33)(362.5,1077.66) -- (356.76,1083.26) ;
		\draw   (378.43,1078.39) -- (384.03,1084.13)(384.1,1078.46) -- (378.36,1084.06) ;
		\draw  [fill={rgb, 255:red, 0; green, 0; blue, 0 }  ,fill opacity=1 ] (343.81,1072.81) .. controls (343.81,1071.65) and (344.75,1070.71) .. (345.91,1070.71) .. controls (347.06,1070.71) and (348,1071.65) .. (348,1072.81) .. controls (348,1073.96) and (347.06,1074.9) .. (345.91,1074.9) .. controls (344.75,1074.9) and (343.81,1073.96) .. (343.81,1072.81) -- cycle ;
		
		\draw (52,1134.18) node [anchor=north west][inner sep=0.75pt]  [font=\small]  {$1$};
		\draw (79,1091.18) node [anchor=north west][inner sep=0.75pt]  [font=\small]  {$2$};
		\draw (95,1075.91) node [anchor=north west][inner sep=0.75pt]    {$3$};
		\draw (119,1075.91) node [anchor=north west][inner sep=0.75pt]    {$4$};
		\draw (272.27,1095.02) node [anchor=north west][inner sep=0.75pt]  [font=\small]  {$1$};
		\draw (346.27,1038.02) node [anchor=north west][inner sep=0.75pt]  [font=\small]  {$2$};
		\draw (332.13,1059.08) node [anchor=north west][inner sep=0.75pt]  [font=\small]  {$q$};
		\draw (305.27,1122.02) node [anchor=north west][inner sep=0.75pt]  [font=\small]  {$D_{1}$};
		\draw (221.27,1072.02) node [anchor=north west][inner sep=0.75pt]  [font=\small]  {$D_{2}$};
		\draw (324.27,1082.02) node [anchor=north west][inner sep=0.75pt]  [font=\small]  {$p$};
		\draw (372.27,1035.02) node [anchor=north west][inner sep=0.75pt]  [font=\small]  {$T_{p} N_{D_{1}} X$};
		\draw (346.19,1118.2) node [anchor=north west][inner sep=0.75pt]  [font=\small]  {$N_{D_{1}} X$};
		\draw (342.87,1074.56) node [anchor=north west][inner sep=0.75pt]  [font=\small]  {$3$};
		\draw (371.67,1084.18) node [anchor=north west][inner sep=0.75pt]    {$4$};
		\draw (397.33,1096.11) node [anchor=north west][inner sep=0.75pt]    {$T_{q} T_{p} N_{D_{1}} X$};
		\draw (158,1120) node [anchor=north west][inner sep=0.75pt]    {$D_{1}$};
		\draw (105.5,1019.5) node [anchor=north west][inner sep=0.75pt]    {$D_{2}$};

	\end{tikzpicture}
	
	\caption{An example of a stable 4-pointed FM grid expansion of $(\PP^2|D_1+D_2)$}
	\label{fig:4-FMexp}
\end{figure}

\subsection{Construction of $\logFM$ via an iterated blow-up} \label{subsection:iteratedbl}
The key idea is to relativise Fulton--MacPherson's construction of $\FM$ by working over the moduli space of stable $n$-pointed grid expansions $\logProd$. 

Given a subset $I\subset \oneton$ and a subset $J\subset \{1, 2, \ldots, n+1\}$, we consider the diagonals \[\Delta_I = \{(x_1, x_2, \dots, x_n) \in X^n \; | \; x_i = x_j \; \forall \; i, j \in I\} \subset X^n,\] and \[\Delta_J^+ = \{(x_1, x_2, \dots, x_{n+1}) \in X^{n+1} \; | \; x_i = x_j \; \forall \; i, j \in J\} \subset X^{n+1}.\] Let $\D_I$ and $\D^+_J$ be the strict transforms of $\Delta_I$ and $\Delta^+_J$ under the logarithmic modifications $\logProd \to X^n$ and $\fam \to X^{n+1}$ respectively. We define the space $\logFM$ as the iterated blow-up of $\logProd$ in the following sequence of centres, where at each step the dominant transform of the centre under existing blow-ups is taken:
\begin{equation}\label{seq:FM}
    \D({\{1,2\}}); \D({\{1,2,3\}}), \D({\{1,3\}}), \D({\{2,3\}}); \dots; \D({\{1,2,\dots,n\}}), \dots, \D({\{n-1,n\}}). \tag{$*$}
\end{equation}

The boundary divisor of $\logFM$, i.e. the complement of $\logConf$, consists of two types of components: 
\begin{itemize}
	\item strict transforms of boundary divisor components of $\logProd$, and
	\item exceptional divisors $D(J)$ corresponding to the blow-up centre $\D(J)$.
\end{itemize}


Consider the fibre product $\logFM \times_{\logProd} \fam$. For any $j \in J$, the section $\mathbf{s}_j: \logProd \to \fam$ gives rise to a closed immersion of the divisor $D(J) \subseteq \logFM$ into $\logFM \times_{\logProd} \fam$. Call the image $s(J) \subseteq \logFM \times_{\logProd} \fam$.

The scheme $\logFMfam$ is defined as the iterated blow-up of $\logFM \times_{\logProd} \fam$ in the sequence of strict transforms of sections
\begin{equation}\label{seq:FM+}
	s({\{1,2,\ldots, n\}}); s({\{1,2,\ldots, n-1\}}), \ldots; s({\{1,2\}}), \ldots, s({\{n-1,n\}}). \tag{$**$}
\end{equation}
The existence of a morphism $\piFM \colon \logFMfam \to \logFM$ is immediate.

\begin{observation} \label{obs:logFMfibres}
	Denote by $\fam^{\text{FM}}_J$ the iterated blow-up of $\logFM \times_{\logProd} \fam$ along the centres in (\ref{seq:FM+}) until $s(J)$. Denote by $\fam^{\text{FM}}_I$ the iterated blow-up at the preceding step, and denote by $\fam^{\text{FM}}_I |_{D(J)}$ the preimage in $\fam^{\text{FM}}_I$ over the divisor $D(J) \subset \logFM$. Let $s$ denote the image of the induced section $D(J) \to \fam^{\text{FM}}_I |_{D(J)}$; one can verify by local computations that $s$ is contained in the relative smooth locus of $\fam^{\text{FM}}_I \to \logFM$.
	
	Then the preimage of {$\fam^{\text{FM}}_J \to \logFM$} over the divisor $D(J) \subset \logFM$ is 
	\[\bl{s}{\fam^{\text{FM}}_I |_{D(J)}} \coprod_{\PP(T_s \fam^{\text{FM}}_I |_{D(J)})} \PP(T_s \fam^{\text{FM}}_I |_{D(J)} \oplus \mathbbm{1}).\]
	
	The divisors $D(J)$ of $\logFM$, along with the strict transforms of the existing strata on $\logProd$, induce a stratification on $\logFM$. Over each locally closed stratum $Z$, as the divisors $D(J)$ intersect transversely, we can extend the above observation to describe the preimage of $Z$ in $\logFMfam$. In particular, we see that the preimage in $\logFMfam$ over each point in the stratum, together with its $n$ points, is a stable $n$-pointed FM expansion of the expected combinatorial type.
\end{observation}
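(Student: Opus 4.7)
The argument parallels Fulton and MacPherson's original analysis of the exceptional locus of their iterated blow-up, relativised over $\logProd$. The proof has two components: (a) the local blow-up identification for a single divisor $D(J)$, and (b) the iterated picture over arbitrary strata, together with stability.

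For (a), the plan is to identify the preimage of $D(J)$ in $\fam^{\text{FM}}_J$ by invoking the classical description of a blow-up along a smooth subvariety contained in a smooth divisor. The key preparatory step is to verify that the iterated blow-up $\fam^{\text{FM}}_I \to \logFM \times_{\logProd} \fam$ is an isomorphism in a neighbourhood of the section $s(J)$. Concretely, the earlier blow-up centres $s(J')$ with $J' \supsetneq J$ must be shown disjoint from $s(J)$ above $D(J) \cap D(J')$; this follows from the blow-up order in (\ref{seq:FM}) (larger subsets first within each group), which ensures that the sections $\mathbf{s}_j$ for $j \in J$ are separated from $\mathbf{s}_{j'}$ for $j' \in J'\setminus J$ once restricted to $D(J')$. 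Once this reduction is in place, blowing up the smooth codimension-$(d{+}1)$ centre $s \subset \fam^{\text{FM}}_I$ (where $d = \dim X$), which sits as a codimension-$d$ subvariety inside the smooth divisor $\fam^{\text{FM}}_I|_{D(J)}$, yields an exceptional divisor $\PP(N_{s|\fam^{\text{FM}}_I})$ and strict transform $\bl{s}{\fam^{\text{FM}}_I|_{D(J)}}$, meeting along $\PP(T_s \fam^{\text{FM}}_I|_{D(J)})$. The normal bundle sequence for $s \subset \fam^{\text{FM}}_I|_{D(J)} \subset \fam^{\text{FM}}_I$ expresses $N_{s|\fam^{\text{FM}}_I}$ as an extension of a line bundle by $T_s \fam^{\text{FM}}_I|_{D(J)}$, and the identity $\PP(E) \cong \PP(E \otimes L)$ absorbs the line bundle quotient to produce the stated $\PP(T_s \fam^{\text{FM}}_I|_{D(J)} \oplus \mathbbm{1})$.

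For (b), the divisors $D(J)$ intersect transversally on $\logFM$, being the smooth successive exceptional divisors of the iterated blow-up in (\ref{seq:FM}). On a locally closed stratum $Z \subseteq D(J_1) \cap \cdots \cap D(J_k)$, I would induct on $k$, applying (a) to each $D(J_i)$ in turn; transversality guarantees that at each step the relevant centre remains smooth of the correct codimension and disjoint from the earlier exceptional loci. This inductively attaches the expected Fulton--MacPherson bubble per index set, producing over each point of $Z$ a stable $n$-pointed FM expansion whose combinatorial type matches $Z$. Stability is then immediate: each bubble introduced at step $J$ carries the $|J| \geq 2$ markings from $J$ together with the intersection with its parent component, so every non-root vertex is at least $3$-valent. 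The main technical obstacle is the disjointness check underlying step (a), which requires a careful combinatorial argument using the specific order in sequences (\ref{seq:FM}) and (\ref{seq:FM+}); the remainder reduces to standard blow-up algebra and bookkeeping of combinatorial types.
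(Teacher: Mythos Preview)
The paper states this as an observation without proof, implicitly deferring to the local analysis in \cite[\S3]{FM}. Your overall strategy --- relativising Fulton--MacPherson's argument over $\logProd$, then running the normal bundle sequence and the $\PP(E)\cong\PP(E\otimes L)$ identification --- is the right one, and your treatment of part (b) (transversality of the $D(J)$, the inductive bubble-attachment, and stability via valency) is correct.

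There is, however, a genuine error in your preparatory step for (a). You claim that the earlier centres $s(J')$ with $J'\supsetneq J$ are disjoint from $s(J)$ over $D(J)\cap D(J')$, so that $\fam^{\text{FM}}_I \to \logFM \times_{\logProd} \fam$ is an isomorphism near $s(J)$. This is false. The divisor $D(J')\subset\logFM$ maps to $\delta(J')\subset\logProd$, where by definition \emph{all} sections $\mathbf{s}_j$ for $j\in J'$ coincide in $\fam$; hence over $D(J)\cap D(J')$ the images $s(J)$ and $s(J')$ agree (both are the image of $\mathbf{s}_j$ for any $j\in J\subset J'$). The blow-up order in (\ref{seq:FM}) governs the base $\logFM$ and does nothing to separate these sections in the pulled-back family. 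In local coordinates with $D(J)=\{u=0\}$, $D(J')=\{v=0\}$, and fibre coordinates $x_1,\dots,x_d$ centred on $\mathbf{s}_j$, one has $s(J)=\{u=x_1=\cdots=x_d=0\}$ and $s(J')=\{v=x_1=\cdots=x_d=0\}$, which visibly meet along $\{u=v=x_1=\cdots=x_d=0\}$.

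The fix is to abandon the isomorphism claim and work directly in $\fam^{\text{FM}}_I$. In the same local model, after blowing up $s(J')$ the strict transform of $s(J)$ is, in the chart $x_i=v\xi_i$, the locus $\{u=\xi_1=\cdots=\xi_d=0\}$: still a smooth section over $D(J)$, of codimension $d$ in the (smooth) preimage of $D(J)$, but now meeting the exceptional divisor. Iterating, one checks inductively that $\fam^{\text{FM}}_I$ is smooth, that the preimage of $D(J)$ is a smooth divisor near the strict transform $s$, and that $s$ remains a smooth section. Your normal bundle sequence then applies verbatim in $\fam^{\text{FM}}_I$ rather than in $\logFM\times_{\logProd}\fam$, and the rest of your argument goes through unchanged.
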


\begin{lemma} \label{lem:sections}
	The family $\piFM\colon \logFMfam \to \logFM$ comes with $n$ sections $t_i$.
\end{lemma}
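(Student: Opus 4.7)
The plan is to build the sections $t_i$ inductively along the sequence~(\ref{seq:FM+}) of iterated blow-ups defining $\logFMfam$, using the universal property of blow-ups. For the base case I would set $t_i^{(0)} \colonequals \mathbf{s}_i \circ \pi_{\mathrm{FM}}$, where $\pi_{\mathrm{FM}} \colon \logFM \to \logProd$ is the blow-up built in Section~\ref{subsection:iteratedbl}; this is a section of $\logFM \times_{\logProd} \fam \to \logFM$ because it is pulled back from a section of $\fam \to \logProd$.

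For the inductive step, suppose a section $t_i^{(\mathrm{prev})}$ has been constructed on the stage of $\logFMfam$ immediately before the blow-up along $s(J)$. By the universal property of the blow-up, $t_i^{(\mathrm{prev})}$ lifts to the next stage precisely when the pullback ideal $(t_i^{(\mathrm{prev})})^{*}\mathcal{I}_{s(J)}$ is invertible on $\logFM$. I would then split into two cases. If $i \in J$, then by the very definition of $s(J)$ the restriction of $t_i^{(\mathrm{prev})}$ to $D(J)$ coincides with $s(J)$, so the scheme-theoretic preimage is exactly the Cartier divisor $D(J) \subset \logFM$ and the pullback ideal is $\mathcal{I}_{D(J)}$, which is invertible. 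If $i \notin J$, I would argue that $t_i^{(\mathrm{prev})}$ and $s(J)$ are disjoint as closed subschemes of the ambient blow-up stage, so the pullback ideal is the unit ideal and the lift is automatic.

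The decisive combinatorial input is the order fixed in~(\ref{seq:FM+}): the blow-ups proceed in strictly decreasing cardinality of $J$. Thus by the time $s(J)$ is processed, every $s(K)$ with $J \subsetneq K$ has already been blown up. The set-theoretic locus where the initial sections $t_i^{(0)}$ and $t_j^{(0)}$ agree, for $i \notin J$ and $j \in J$, is supported over $\bigcup_{K \supseteq J \cup \{i\}} D(K)$; and after each earlier blow-up along such an $s(K)$, the two lifted sections are separated on the exceptional divisor by the fibrewise bubble description in Observation~\ref{obs:logFMfibres}. Iterating this yields the disjointness required for the $i \notin J$ case at the stage of blowing up $s(J)$.

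The step I expect to be the main obstacle is precisely the separation claim in the $i \notin J$ case: I need to verify that after blowing up $s(K)$, the lifts of the $\mathbf{s}_\ell$ for distinct $\ell \in K$ really land at pairwise distinct points of the fibrewise exceptional projective bundle, so that their strict transforms are disjoint over $D(K)$. This is the relative analogue of the classical Fulton--MacPherson separation argument; my plan is to argue fibrewise using Observation~\ref{obs:logFMfibres}, exploiting that $\logFM$ is smooth and that the divisors $D(K)$ are Cartier and meet transversely, so that the relevant normal-direction computation at $s(K)$ reduces to the standard linear-algebraic identification of the exceptional divisor with the bubble $\PP(T_s \oplus \mathbbm{1})$.
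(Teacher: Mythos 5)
Your proposal is correct and follows essentially the same route as the paper: an induction along the blow-up sequence~(\ref{seq:FM+}) using the universal property of blow-ups, with the dichotomy that for $i \in J$ the section pulls the centre back to the Cartier divisor $D(J)$, while for $i \notin J$ the (lifted) section misses the centre. The only difference is one of emphasis: the paper simply asserts the identity $s_i^{-1}(s(J)) = \emptyset$ for $i \notin J$, whereas you correctly identify this disjointness as the point requiring the relative Fulton--MacPherson separation argument (via the ordering by decreasing $|J|$ and the bubble description of Observation~\ref{obs:logFMfibres}) and sketch how to supply it.
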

\begin{proof}
	This follows from the universal property of blow-ups \cite[II.7.14]{hartshorne}. Using the notation from Observation~\ref{obs:logFMfibres}, at each step of the blow-up $\fam^{\text{FM}}_J$, suppose there are already $n$ sections to the previous blow-up $s_i \colon \FM \to \fam^{\text{FM}}_I$. We observe
	\[ s_i^{-1}(s(J)) = 
	\begin{cases}
		D(J),& \text{if } i \in J\\
		\emptyset,              & \text{otherwise},
	\end{cases}
	\] where in both cases the section $s_i$ lifts to $\fam^{\text{FM}}_J$. 
\end{proof}

\begin{lemma}\label{lem:logFMfibres}
	Given any closed point $f \colon \Spec k \to \logFM$, the pullback of \[\piFM \colon \logFMfam \to \logFM\] along $f$, together with $n$ points $f^*t_1, f^*t_2, \ldots, f^*t_n$, is a stable $n$-pointed FM grid expansion. 
\end{lemma}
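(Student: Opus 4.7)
The plan is to relate $\piFM^{-1}(f)$ to the construction in Definition~\ref{def:FMgridexp} by unwinding the iterated blow-up (\ref{seq:FM+}) one step at a time, using Observation~\ref{obs:logFMfibres} as the engine.

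First I would push $f$ down via the contraction $\logFM \to \logProd$ to obtain a closed point corresponding to some stable $n$-pointed grid expansion $(X_{\cal{P}}, p_1, \ldots, p_n)$; this will serve as the ``base'' of the FM grid expansion to be produced. I would then record the collection $\cal{J}(f) = \{J \subset \oneton : f \in D(J)\}$ of exceptional divisors of $\logFM$ passing through $f$. Because the divisors $D(J)$ meet transversely and arise from the blow-up centres $\D(J)$ in the nested order (\ref{seq:FM}), the set $\cal{J}(f)$ is a laminar family: any two members are either nested or disjoint. This laminar structure is precisely what will play the role of the rooted trees $\T_{v_i}$ in the combinatorial type of the putative FM grid expansion.

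Next I would iterate Observation~\ref{obs:logFMfibres} along the blow-up sequence (\ref{seq:FM+}). At the step in which $s(J)$ is blown up, the fibre of $\fam^{\text{FM}}_J \to \logFM$ over a point of $D(J)$ acquires a new $\PP(T_s \oplus \mathbbm{1})$-bubble attached along $\PP(T_s)$ to the previous fibre at the smooth point where the sections $\{\mathbf{s}_j\}_{j \in J}$ coincide. Iterating over $J \in \cal{J}(f)$ exhibits $\piFM^{-1}(f)$ as $X_{\cal{P}}$ with projective bubbles attached in the nested pattern dictated by $\cal{J}(f)$, matching Definition~\ref{def:FMgridexp} on the nose. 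Lemma~\ref{lem:sections} then places each $f^*t_i$ on the innermost bubble containing $i$ (or on $X_{\cal{P}}$ itself if no $J \in \cal{J}(f)$ contains $i$), and the fact that distinct sections $s(J)$, $s(J')$ are disjoint at the moment they are blown up ensures that the $f^*t_i$ are distinct and land on the smooth locus.

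The main obstacle is the stability check. I would need to show that every non-root bubble carries at least three markings: equivalently, for each $J \in \cal{J}(f)$ the number of maximal $J' \in \cal{J}(f)$ with $J' \subsetneq J$ together with the singletons $\{i\}$ for $i \in J$ not absorbed by any such $J'$ is at least two. Combined with the single edge joining $J$ to its parent, this yields valence at least three. I expect this to follow from a careful local analysis of (\ref{seq:FM}): because $\D(J) \subset \D(J')$ whenever $J' \subsetneq J$ in $\cal{J}(f)$ and because the strict transforms are taken at each stage, any violation of the above would force $D(J)$ to coincide with $D(J')$ for some $J' \subsetneq J$ or with a stratum already recorded on $\logProd$, contradicting the presence of $f$ on $D(J)$ as an independent divisor. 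This bookkeeping is the one delicate point of the proof; the rest is essentially a direct unwinding of the two blow-up sequences.
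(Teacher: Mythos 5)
Your argument is essentially the paper's own proof, which simply cites Observation~\ref{obs:logFMfibres}: you have unwound that observation explicitly along the two blow-up sequences, and every step is sound. The one point you flag as delicate --- the stability count --- is in fact immediate from the laminar structure of $\cal{J}(f)$ rather than requiring a local analysis of the centres: a bubble indexed by $J$ with no child in $\cal{J}(f)$ carries $|J|\geq 2$ singleton markings plus the edge to its parent, while a bubble with at least one child $J'\subsetneq J$ carries that child, the parent edge, and at least one further marking coming from the non-empty set $J\setminus J'$.
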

\begin{proof}
	Follows from Observation~\ref{obs:logFMfibres}.
\end{proof}

\begin{proposition} \label{prop:log-smooth}
	The morphism $\piFM$ is logarithmically smooth.
\end{proposition}
\begin{proof}
	By construction, $\logFMfam$ is an iterated blow-up of $\logFM \times_{\logProd} \fam$. Since $\fam \to \logProd$ is logarithmically smooth by Proposition~\ref{prop:grid-exp}, so is \[\logFM \times_{\logProd} \fam \to \logFM.\] Thus, it remains to prove that the morphism is logarithmically smooth on the exceptional divisors of $\logFMfam$.
	
	Let $p$ be a point on an exceptional divisor $D(J)^+$ of $\FMdegenfam$. Let $q$ be the image of $p$ in $\FMdegen$; it lies on the exceptional divisor $D(J)$ of $\FMdegen$. We note that $D(J)$ intersects other boundary divisors transversely, and by Observation~\ref{obs:logFMfibres} the preimage of $D(J)$ consists of two boundary divisor components intersecting transversely. So one can choose neighbourhood $V$ of $q$ small enough such that $\pi$, restricted to the preimage of $V$, has local model $\AA^{d+1} \times \AA^{dn-1} \to \AA^1 \times \AA^{dn-1}$ given by $(x_1, \ldots, x_{d+1}) \mapsto x_1 x_2$ on the first factor and identity on the second factor, where $d$ denotes the dimension of $X$. This morphism is toric, hence logarithmically smooth.	
\end{proof}

\subsection{Modular description of $\logFM$} 
We show that the space $\logFM$ parametrises stable $n$-pointed FM grid expansions, and like $\logProd$, admits a stratification by combinatorial types.

\begin{theorem} \label{thm:logFM_fibres}\hfill
    \begin{enumerate}
        \item The scheme $\logFM$ is a simple normal crossings compactification of $\logConf$, i.e. the boundary divisor components intersect transversely.
        \item The morphism $\piFM\colon  \logFMfam \to \logFM$ is a logarithmically smooth, flat family of stable Fulton--MacPherson grid expansions with $n$ sections $t_i$ supported on the smooth locus of the Fulton--MacPherson grid expansions.
        \item The boundary of $\logFM$ is stratified by combinatorial types of planted forests.
    \end{enumerate}
\end{theorem}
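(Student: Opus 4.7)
The approach is to verify both statements by a stratum-by-stratum analysis of the iterated blow-up construction of Section~\ref{subsection:iteratedbl}, using Observation~\ref{obs:logFMfibres} as the main technical tool.

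For part (1), the pointwise statement has essentially been established already: Lemma~\ref{lem:logFMfibres} shows that each closed-point fibre of $\piFM$, together with the $n$ sections $t_i$, is a stable $n$-pointed FM grid expansion. What remains is the flatness of $\piFM$ and the claim that the $t_i$ land in the smooth locus. For flatness, I would argue by induction on the blow-up steps defining $\logFMfam$. The morphism $\pi\colon \fam \to \logProd$ is flat by Theorem~\ref{thm:logprod}, so $\logFM \times_{\logProd} \fam \to \logFM$ is flat. At each subsequent step we blow up along the strict transform of some section $s(J)$ whose fibrewise image lies in the smooth locus of the existing family; combined with the explicit decomposition of the preimage over $D(J)$ from Observation~\ref{obs:logFMfibres}, this shows that every fibre remains a reduced FM grid expansion of the expected combinatorial type, and a Hilbert polynomial calculation (constant across the base) then gives flatness. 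For the sections, $t_i$ arises from $\mathbf{s}_i$ by successive strict transforms, and at each step involving $s(J)$ with $i\in J$, the proof of Lemma~\ref{lem:sections} places $t_i$ on the affine part $\PP(T_s \oplus \mathbbm{1})\setminus \PP(T_s)$ of the new exceptional bubble, which lies in the smooth locus of the new fibre; the stability condition in Definition~\ref{def:FMgridexp} is also verified along the way, since every bubble introduced carries at least the marked points whose collision caused the blow-up.

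For part (2), the plan is to exhibit an explicit bijection between boundary strata of $\logFM$ and combinatorial types of planted forests. The boundary divisors of $\logFM$ are of two kinds: strict transforms of the boundary divisors of $\logProd$, indexed by the rays of $\tropmod$ (equivalently by pairs $(k,I)$ as in Figure~\ref{fig:forest_div_exp}); and the exceptional divisors $D(J)$ produced by the iterated blow-up, for subsets $J\subset\{1,\ldots,n\}$ with $|J|\geq 2$ (as in Figure~\ref{fig:forest_div_fm}). By Theorem~\ref{thm:logprod_strat}, the strict transforms give a stratification by combinatorial types $\tau=(\cal{F}\to \cal{F}_{\Sigma},m)$ of grid subdivisions. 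On top of each such stratum, the exceptional divisors $D(J)$ introduce further substrata recording, at each vertex $v$ of $\cal{P}$, how the points in $m^{-1}(v)$ collide on the corresponding component of the grid expansion. The order in which the blow-ups are performed in the sequence~(\ref{seq:FM}), together with the fact that each collision must create a bubble with at least two marked points, ensures that this nested data is precisely a rooted tree $\T_v$ with legs indexed by $m^{-1}(v)$ in which every non-root vertex has valence at least 3; this is exactly the data of a combinatorial type of planted forest in the sense of Definition~\ref{def:forest}.

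The main obstacle will be the rigorous verification of flatness. While the expected behaviour is intuitive from the modular description, one must inductively track how the fibres of the iterated blow-ups change at each step. The key input should be the transversality of the centres to the existing strata, which is a consequence of the sections $s(J)$ landing in the smooth locus of the family at each stage; this puts us in a setting where a family version of Lemma~\ref{lem:strict_transverse} applies and allows both the flatness and the reducedness of fibres to be propagated through the iteration.
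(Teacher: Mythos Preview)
Your proposal is correct in spirit and considerably more detailed than the paper's own proof, which is essentially a two-line argument: for part~(1) the paper simply invokes Lemma~\ref{lem:logFMfibres} (which in turn rests on Observation~\ref{obs:logFMfibres}), and for part~(2) it appeals to the fibre description just established together with \cite[Section~2]{FM}. In particular, the paper does not spell out the flatness of $\piFM$ in the proof of this theorem at all; the claim that the sections land in the smooth locus is absorbed into the definition of a stable $n$-pointed FM grid expansion (Definition~\ref{def:FMgridexp}), so Lemma~\ref{lem:logFMfibres} already carries that content.

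Where you genuinely diverge is your proposed flatness argument. You outline an inductive Hilbert-polynomial verification across the blow-up steps. This is a legitimate route over a smooth (hence reduced) base, provided one fixes a relatively ample line bundle and checks constancy carefully; it does, however, require some bookkeeping to ensure the family is projective at each stage. The paper's strategy, visible in the analogous degeneration statement (Theorem~\ref{thm:FMdegen}), is quite different: it equips both source and target with a refined divisorial logarithmic structure, checks that $\piFM$ is logarithmically smooth via an explicit local model, and then deduces flatness from combinatorial flatness of the induced map of tropicalisations using the Molcho--Tsuji criterion. Your approach is more elementary and avoids introducing the refined log structure; the paper's approach is more uniform with the rest of the text and gives reducedness of fibres for free. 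Either would serve here.

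For part~(2), your bijection between boundary strata and combinatorial types of planted forests is exactly what is needed, and the paper is content to leave this implicit, gesturing at the fibre description and at Fulton--MacPherson's original analysis. Your unpacking of how the two kinds of divisors (strict transforms from $\logProd$ and exceptional $D(J)$) assemble into the data $(\mathcal{F}\to\mathcal{F}_\Sigma,m,\mathtt{T}_{v_1},\ldots,\mathtt{T}_{v_k})$ is the right picture and matches the codimension count in Proposition~\ref{prop:codimFM}.
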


\begin{proof}
	
	By Lemma~\ref{lem:logFMfibres}, the morphism $\piFM$ with $n$ sections $t_i$ is a family of stable Fulton--MacPherson grid expansions. Flatness of $\pi_{\mathrm{FM}}$ follows from miracle flatness: the variety $\logFM$ is smooth as it is an iterated blow-up of smooth variety $\logProd$ along smooth centres, the variety $\logFMfam$ is Cohen--Macaulay as it is an iterated blow-up of a Cohen--Macaulay variety $\fam \times_{\logProd} \logFM$ over smooth centres, and the fibres of $\pi_{\mathrm{FM}}$ are FM grid expansions which are equidimensional. 
	
	The stratification follows from the description of fibres of $\piFM$ in Statement (2), and also from the work of \cite[Section 2]{FM}. 
	
	It remains to prove Statement (1). We recall that the boundary divisor components of $\logProd$ intersect transversely, so do their strict transforms in $\logFM$.
	
	Proposition~\ref{prop:diagonal} gives a concrete description of the blow-up centres $\delta_I$ as diagonals in $\logProd$, and it follows that any intersection of a stratum $V_{\tau} \subset \logProd$ with a diagonal $\delta_I$ is transverse. By \cite[Lemma~2.9(v)]{wonderful}, after replacing $V_{\tau}$ and $\delta_I$ by their dominant transforms, $\widetilde{V_{\tau}}$ and $D(I)$ intersect transversely.
	
	Since the blow-up centres $\delta_I$ are diagonals in $\logProd$, they have the same building set structure as an arrangement of subvarieties of $\logProd$ as the arrangement of diagonals $\Delta_I \subset X^n$. See \cite{wonderful} for the theory of arrangements of subvarieties. Therefore, as we take the same blow-up sequence (\ref{seq:FM}) as in the construction of $\FM$, we conclude that the exceptional divisors $D(I)$ intersect transversely.
\end{proof}

We give a formula for the codimensions of boundary strata.
\begin{proposition}\label{prop:codimFM}
    Given a combinatorial type $\nu=(\F \to \F_{\S}, m, \T_{v_1}, \ldots, \T_{v_k})$, consider any $n$-pointed grid expansion $(\cal{P},m)$ with type $\tau = (\F \to \F_{\S}, m)$. Writing \[\cal{P} = \prod_{1\leq i \leq n} \cal{P}_i,\] we let $w_i$ be the number of vertices on $\cal{P}_i$ apart from the origin. Then the codimension of the strata $W_{\nu}$ in $\logFM$ is \[\sum_{1\leq i\leq n}w_i + \sum_{1\leq j\leq k} (|V(\T_{v_j})| - 1),\] where $|V(\T_{v_j})|$ denotes the number of vertices in $\T_{v_j}$.
\end{proposition}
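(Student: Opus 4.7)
The plan is to interpret the codimension of $W_{\nu}$ as the dimension of the cone in the tropicalisation of $\logFM$ whose interior parametrises planted forests of combinatorial type $\nu$, and then compute this dimension combinatorially as a product. This mirrors the approach used for the codimension of $V_{\tau}$ in $\logProd$.

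First, I would upgrade the stratification statement in Theorem~\ref{thm:logFM_fibres}(2) to an identification of strata with cones. The boundary of $\logFM$ inherits a fine and saturated logarithmic structure from $\logProd$ together with the exceptional divisors of the blow-up sequence $(*)$. As in the proof of Theorem~\ref{thm:logprod_strat}, this gives a cone complex whose cones are in bijection with the combinatorial types of planted forests, and the codimension of $W_{\nu}$ equals the dimension of the cone $\sigma_{\nu}$ corresponding to $\nu$.

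Second, I would establish a decomposition
\[
\sigma_{\nu} \;\cong\; \mu_{\tau} \;\times\; \prod_{j=1}^{k} \Rgeq^{E(\T_{v_j})},
\]
where $\tau = (\F \to \F_{\S}, m)$ is the underlying combinatorial type of the grid expansion and $E(\T_{v_j})$ is the edge set of the rooted tree $\T_{v_j}$. The first factor records the positions of the vertices in $\cal{P}$ (the tropical data of the grid expansion), while each factor $\Rgeq^{E(\T_{v_j})}$ records the positive edge lengths of the metric tree $\T^{\mathrm{m}}_{v_j}$ at the bubble point $x_j \in X_{\cal{P}}$. To verify this decomposition, I would use the iterated blow-up description from Subsection~\ref{subsection:iteratedbl} and Observation~\ref{obs:logFMfibres}: non-root vertices of $\T_{v_j}$ are in bijection with the screens/bubbles over $x_j$, and each screen arises from exactly one of the exceptional divisors $D(J)$ in the blow-up sequence $(*)$, where $J \subseteq \oneton$ is the set of marked points lying in the branch rooted at that vertex. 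Transversality of the successive blow-up divisors then shows that each non-root vertex contributes one $\Rgeq$-factor to $\sigma_{\nu}$ independently.

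Finally, combining $\dim \mu_{\tau} = \sum_{i=1}^n w_i$ from the earlier proposition with the elementary fact that a tree with $|V(\T_{v_j})|$ vertices has $|V(\T_{v_j})| - 1$ edges yields
\[
\operatorname{codim} W_{\nu} \;=\; \dim \sigma_{\nu} \;=\; \sum_{1\leq i\leq n} w_i \;+\; \sum_{1\leq j\leq k} (|V(\T_{v_j})| - 1).
\]
The main obstacle is the second step: one must carefully track which exceptional divisors $D(J)$ from $(*)$ contain $W_{\nu}$ and verify that their intersection with the preimage of $V_{\tau}$ is transverse, so that no cancellation or coincidence occurs between the grid-expansion directions and the FM-tree directions. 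This is ultimately a local check, using that the centres blown up in $(*)$ are strict transforms of diagonals in $\fam$ and that these are transverse to the existing strata coming from $\logProd$.
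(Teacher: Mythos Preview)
Your proposal is correct and follows essentially the same approach as the paper. Both split the codimension into the contribution from the underlying grid type $\tau$ (the first sum, via the earlier codimension formula for $V_\tau$) and the contribution from the FM trees (the second sum, one for each non-root vertex, equivalently one for each edge, equivalently one for each divisor $D(J)$ containing $W_\nu$); the only cosmetic difference is that you package this as a product decomposition of the cone $\sigma_\nu \cong \mu_\tau \times \prod_j \Rgeq^{E(\T_{v_j})}$, whereas the paper argues directly that $W_\nu$ is the transverse intersection of the strict transform of $V_\tau$ with the relevant divisors $D(J)$.
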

\begin{proof}
	We recall that a locally closed stratum $W_{\nu}$ is an intersection of the strict transform of the locally closed stratum $V_{\tau} \subseteq \logProd$ and boundary divisors of the form $D(J)$. 
	
    Let $V_{\tau}$ be the locally closed stratum in $\logProd$ of combinatorial type $\tau$. Consider the subscheme $W_{\tau}$ which is the strict transform of $V_{\tau}$ under the map $\logFM \to \logProd$. Then $W_{\tau}$ has codimension equal to the codimension of $V_{\tau}$ in $\logProd$, which gives the first term in the sum. 
    By Observation~\ref{obs:logFMfibres}, the total number of non-root vertices of trees $\T_{v_i}$ is equal to the number of divisors $D(J)$ which contain $W_{\nu}$. Alternatively, we see that each vertex in the tree $\T_{v_i}$, apart from the root vertex, corresponds to an added component of the form $\PP(T_x Y_{v_i} \oplus \mathbbm{1})$ (or a blow-up), and the point configurations on this component are considered modulo translation and homothety, hence contributing codimension 1 to the boundary stratum. This gives the second term in the sum.
\end{proof}

\subsection{Remarks about functoriality and birational models}
Functoriality does not hold in general for Fulton--MacPherson spaces; this is because functoriality fails in general for blow-ups. A specific case where functoriality holds is where $f \colon X \hookrightarrow Y$ is an embedding; since the preimage of a diagonal $\Delta^{Y}_I$ in $Y^n$ is the diagonal $\Delta^X_I$ in $X^n$, by standard results e.g. \cite[Corollary~II.7.15]{hartshorne} this gives an embedding $\FMexample{n}{X} \hookrightarrow \FMexample{n}{Y}$.

A weaker version of functoriality holds for a log blow-up $f \colon (Y,E) \to (X,D)$: there is a map between the logarithmic FM spaces after passing to a further modification, i.e.~$\widetilde{\logFMexample{n}{Y}{E}} \to \logFM$. Note that there is no map $\logProdexample{Y}{E}{n} \to \logProdexample{X}{D}{n}$ due to incompatibility of grid subdivisions of $\Sigma_Y$ and $\Sigma_X$; the map of sets $\Pi_n(\Sigma_Y) \to \Pi_n(\Sigma_X)$ is not a map of cone complexes. However, we can pass to a common refinement of $n$-marked grid subdivisions of $\Sigma_Y$ and $\Sigma_X$, thus obtaining a new tropical moduli space $\widetilde{\Pi}_n$ which is a subdivision of both $\Pi_n(\Sigma_Y)$ and $\Pi_n(\Sigma_X)$. This gives logarithmic modifications $\widetilde{\logProdexample{Y}{E}{n}} \to \logProdexample{Y}{E}{n}$, $\widetilde{\logProdexample{Y}{E}{n}} \to \logProd$.
 
Since the $n$ marked points lie in the interior of expansion components, the preimage of the diagonal $\delta_I$ in $\logProdexample{Y}{E}{n}$ is the diagonal $\delta_I$ in $\widetilde{\logProdexample{Y}{E}{n}}$, and an analogous statement holds for $\logProd$. Therefore, there are maps $\widetilde{\logFMexample{n}{Y}{E}} \to \logFMexample{n}{Y}{E}$ and $\widetilde{\logFMexample{n}{Y}{E}} \to \logFM$; in fact one can check both maps are logarithmic modifications.

\begin{equation*}
	\begin{tikzcd}
		& \widetilde{\logFMexample{n}{Y}{E}} \arrow[ld, "\text{modification}"'] \arrow[rd, "\text{modification}"] &\\
		\logFMexample{n}{Y}{E} & & \logFM.
	\end{tikzcd}
\end{equation*}

A related question is, given two toric pairs $(X,D)$, $(Y,E)$ of the same dimension, how the logarithmic FM spaces $\logFM$ and $\logFMexample{n}{Y}{E}$ are related. A similar argument holds: we may pass to a common refinement of an $n$-marked grid subdivision of $\Sigma_X$ and of $\Sigma_Y$, and the result is some common logarithmic modification $\widetilde{\mathrm{FM}_n} \to \logFMexample{n}{Y}{E}$ and $\widetilde{\mathrm{FM}_n} \to \logFM$.

\subsection{Examples}
\subsubsection {Moduli space of $n$-pointed stable rational curves}\cite[Section 3.4.1]{KS}

For $n\geq 3$, the space $\logFMexample{{n-3}}{\PP^1}{0+1+\infty}$ coincides with the moduli space $\overline{M}_{0,n}$ of $n$-pointed stable rational curves.	

\section{Points on a degeneration of a variety} \label{section:points_degen}
Let $\omega\colon W \to B$ be a proper simple normal crossings degeneration of a smooth variety $X$ over a curve $B$, where the special fibre $W_0$ has at least two components $Y_1, Y_2, \ldots, Y_r$.

Generalising the construction in \cite{abram_fantechi}, we construct a logarithmically smooth degeneration of $X^n$ which is flat and have reduced fibres, where the special fibre parametrises $n$ points on expansions of $W_0$.

For simplicity, we endow $X$ with a trivial logarithmic structure, but the constructions in this paper can be adapted to the case where $X$ is given the divisorial logarithmic structure with respect to a simple normal crossings pair $(X,D)$.

\subsection{Simplex-lattice subdivisions}
The pair $(W, W_0)$ is a simple normal crossings pair, and the pair $(B, b_0)$ is a smooth pair. With respect to the induced divisorial logarithmic structures, the logarithmically smooth morphism $\omega\colon  W \to B$ induces the morphisms $\A_W \to \A_B$, and correspondingly, the morphism $w\colon \Sigma_W \subset \R_{\geq 0}^r \to \Sigma_B = \R_{\geq 0}$ given by \[(x_1, x_2, \ldots, x_r) \mapsto x_1 + x_2 + \cdots + x_r.\] 

We consider the \emph{height 1 slice} $\Delta\colonequals w^{-1}(1)$ of $\Sigma_W$. A section of the fibered product $W^{(n)}_{\omega} \to B$ gives $n$ points on $\Sigma_W$, and as the degeneration is described by the local model $t = y_1 y_2 \cdots y_r$, the $n$ points all lie on $\Delta$. 

Let $(u_1, u_2, \ldots, u_n)$ be an $n$-tuple of points on $\Delta$, and let $u_i^{(j)}$ denote the $j$th coordinate of $u_i$. The hyperplanes in $\R^r$ \[H_{ij} \colonequals \{(x_1, x_2, \ldots, x_r) \; |\; x_j = u_i^{(j)}\}\] induce a polyhedral subdivision $\cal{S}$ of $\Delta$. The points $u_1, u_2, \ldots, u_n$ are vertices of $\cal{S}$, and we consider the induced marking function \[m \colon \oneton \to V(\cal{S}).\]

The cone $\Upsilon = \cone(\cal{S})$ is a conical subdivision of $\Sigma_W \subset \R_{\geq 0}^r$. It inherits a map \[(\Upsilon, \Z^r) \to (\R_{\geq 0},\Z).\] We pass to a sublattice $h\Z$ at the base $(\R_{\geq 0},\Z)$ in order to make this map \emph{combinatorially reduced}; we take $h\Z$ to be the intersection of the images of lattices of cones of the source, and pull back the lattice structure $h\Z \subset \Z$ at the base to obtain a sublattice $N' \subset \Z^r$ at the source, so that the image of the lattice of every cone in the source is equal to the lattice $h\Z$ of the base. We endow $\cal{S}$ with the new integral structure.

\begin{definition}[Simplex-lattice subdivision]
	An \emph{$n$-marked simplex-lattice subdivision} is a tuple $(\cal{S}, m)$, where $\cal{S}$ is a simplex-lattice subdivision induced by a tuple $(u_1, u_2, \ldots, u_n)$, and $m$ is the associated marking function.
\end{definition}

By construction, the set of $n$-marked simplex-lattice subdivisions biject with $\Delta^n$.
\begin{figure}[!htb]
	\centering
	\begin{minipage}{.35\textwidth}
		\centering
		\def\svgwidth{.9\textwidth}
\begingroup%
  \makeatletter%
  \providecommand\color[2][]{%
    \errmessage{(Inkscape) Color is used for the text in Inkscape, but the package 'color.sty' is not loaded}%
    \renewcommand\color[2][]{}%
  }%
  \providecommand\transparent[1]{%
    \errmessage{(Inkscape) Transparency is used (non-zero) for the text in Inkscape, but the package 'transparent.sty' is not loaded}%
    \renewcommand\transparent[1]{}%
  }%
  \providecommand\rotatebox[2]{#2}%
  \newcommand*\fsize{\dimexpr\f@size pt\relax}%
  \newcommand*\lineheight[1]{\fontsize{\fsize}{#1\fsize}\selectfont}%
  \ifx\svgwidth\undefined%
    \setlength{\unitlength}{267.71413007bp}%
    \ifx\svgscale\undefined%
      \relax%
    \else%
      \setlength{\unitlength}{\unitlength * \real{\svgscale}}%
    \fi%
  \else%
    \setlength{\unitlength}{\svgwidth}%
  \fi%
  \global\let\svgwidth\undefined%
  \global\let\svgscale\undefined%
  \makeatother%
  \begin{picture}(1,0.96455504)%
    \lineheight{1}%
    \setlength\tabcolsep{0pt}%
    \put(0,0){\includegraphics[width=\unitlength,page=1]{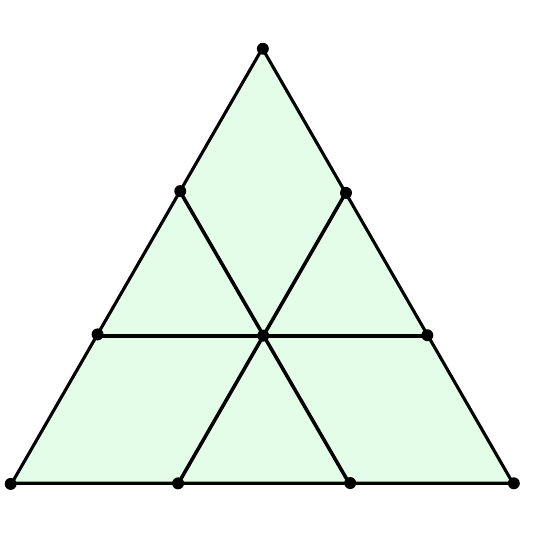}}%
    \put(0.45281381,0.93243114){\color[rgb]{0,0,0}\makebox(0,0)[lt]{\lineheight{0}\smash{\begin{tabular}[t]{l}$Y_1$\end{tabular}}}}%
    \put(-0.00049828,0.00891113){\color[rgb]{0,0,0}\makebox(0,0)[lt]{\lineheight{0}\smash{\begin{tabular}[t]{l}$Y_2$\end{tabular}}}}%
    \put(0.89853913,0.01641066){\color[rgb]{0,0,0}\makebox(0,0)[lt]{\lineheight{0}\smash{\begin{tabular}[t]{l}$Y_3$\end{tabular}}}}%
    \put(0.38215089,0.39005742){\color[rgb]{0,0,0}\makebox(0,0)[lt]{\lineheight{0}\smash{\begin{tabular}[t]{l}$u_1$\end{tabular}}}}%
    \put(0.50419749,0.38737512){\color[rgb]{0,0,0}\makebox(0,0)[lt]{\lineheight{0}\smash{\begin{tabular}[t]{l}$u_2$\end{tabular}}}}%
  \end{picture}%
\endgroup%

	\end{minipage}%
	\begin{minipage}{0.65\textwidth}
		\caption{A $2$-marked simplex-lattice subdivision.}
		\label{fig:degen_sub}
	\end{minipage}
\end{figure}

\subsubsection{Combinatorial types}
Given a polyhedral subdivision $\cal{S} \to \Delta$ with a marking function $m \colon \oneton \to V(\cal{S})$, there is an induced morphism of \emph{face posets} $\F_{\cal{S}} \to \F_{\Delta}$. The marking function $m$ on $V(\cal{S})$ induces a marking function \[m \colon \oneton \to \F_{\cal{S}}.\]
\begin{definition}
 The \emph{combinatorial type} of an $n$-marked simplex-lattice subdivision $(\cal{S},m)$ is the pair $\tau = (\F_{\cal{S}} \to \F_{\Delta}, m)$. 
\end{definition}

Given a tuple $(u_1, u_2, \ldots, u_n)$ of points in $\Delta$, let $H_{ij}^+$ and $H_{ij}^-$ denote the half spaces
\begin{align*}
	H_{ij}^+ &\colonequals \{(x_1, x_2, \ldots, x_r) \; |\; x_j \geq u_i^{(j)}\}, \\
	H_{ij}^- &\colonequals \{(x_1, x_2, \ldots, x_r) \; |\; x_j \leq u_i^{(j)}\}.
\end{align*}

\begin{fact}\label{fact:halfspaces}
	Consider a collection $\cal{C}$ of half spaces of the form $H_{ij}^+$ or $H_{ij}^-$. There exists a collection $\mathfrak{P}$ of linear inequalities of the coordinates $\{u_i^{(j)}\}_{i,j}$, such that the intersection of the half spaces in $\cal{C}$ with $\Delta$ is non-empty if and only if $\mathfrak{P}$ is satisfied.
\end{fact}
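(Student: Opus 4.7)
The plan is to recognise this as a direct consequence of Fourier--Motzkin elimination, or equivalently the classical theorem that the coordinate projection of a polyhedron is again a polyhedron. The intersection in question is carved out by inequalities whose right-hand sides depend linearly on the $u_i^{(j)}$, so feasibility in the $x$-variables translates, after elimination, into linear inequalities in the $u_i^{(j)}$ alone.

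First I would form the polyhedron $P \subset \R^r \times \R^{rn}$ in the joint variables $(x_1, \ldots, x_r; \{u_i^{(j)}\})$ cut out by the defining (in)equalities of $\Delta$, namely $\sum_{j=1}^r x_j = 1$ and $x_j \geq 0$ for all $j$, together with the inequality $x_j \geq u_i^{(j)}$ or $x_j \leq u_i^{(j)}$ contributed by each half-space $H_{ij}^{\pm} \in \cal{C}$. All of these are jointly linear in $(x, u)$, so $P$ is honestly a polyhedron in $\R^{r+rn}$. By construction, a point $(u_i^{(j)}) \in \R^{rn}$ lies in the coordinate projection $\mathrm{pr}_u(P)$ if and only if there exists $x \in \Delta$ belonging to every half-space of $\cal{C}$, i.e.\ if and only if $\bigcap_{H \in \cal{C}} H \cap \Delta \neq \emptyset$.

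Applying Fourier--Motzkin elimination to remove $x_1, \ldots, x_r$ one at a time produces a finite system $\mathfrak{P}$ of linear inequalities purely in the $u_i^{(j)}$ whose common solution set is exactly $\mathrm{pr}_u(P)$. This is the desired collection. There is no real obstacle: the content is a packaging of a classical fact about polyhedra. Should an explicit description of $\mathfrak{P}$ be required for later use, one could alternatively derive the inequalities via Farkas' lemma, obtaining one inequality for each extreme ray of the cone of non-negative multipliers of the $x$-inequalities annihilating the $x$-coefficients; each such ray pairs against the constant terms (which are affine functions of the $u_i^{(j)}$) to give a linear inequality in the $u_i^{(j)}$.
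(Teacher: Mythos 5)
The paper states this as a \emph{Fact} and supplies no proof at all, so there is nothing to compare against line by line; your argument provides the standard justification that the author evidently regarded as routine, and it is correct. Forming the joint polyhedron $P \subset \R^{r}\times\R^{rn}$ in the variables $(x;u)$, noting that membership of $u$ in the coordinate projection $\mathrm{pr}_u(P)$ is exactly the non-emptiness condition, and invoking Fourier--Motzkin elimination (or Farkas/the projection theorem for polyhedra) to present $\mathrm{pr}_u(P)$ by finitely many linear inequalities in the $u_i^{(j)}$ is precisely the right packaging. The one point worth flagging is your implicit identification of $\Delta$ with the full standard simplex $\{\sum_j x_j = 1,\; x_j\geq 0\}$: in the paper $\Delta = w^{-1}(1)$ is the height-one slice of $\Sigma_W$, which is a \emph{union} of faces of that simplex whenever the components $Y_1,\dots,Y_r$ of $W_0$ do not all meet. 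In that case $\Delta$ is not convex, non-emptiness of the intersection with $\Delta$ is a finite disjunction of the systems you obtain by running the elimination face by face, and a single conjunctive collection $\mathfrak{P}$ cannot in general cut out a non-convex region --- a wrinkle already latent in the paper's statement rather than one you introduced. When the deepest stratum of $W_0$ is non-empty (as the paper's local model $t = y_1y_2\cdots y_r$ suggests is the intended setting), $\Delta$ is the full simplex and your proof goes through verbatim.
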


\begin{remark}\label{rem:combtype_degen}
	Consider an $n$-marked simplex-lattice subdivision $(\cal{S}, m)$ of a fixed combinatorial type $\tau$, induced by a tuple $(u_1, u_2, \ldots, u_n)$. Each polyhedron in the polyhedral complex is an intersection of a collection of half spaces of the form $H_{ij}^+$ or $H_{ij}^-$. It follows from Fact~\ref{fact:halfspaces} that the combinatorial type is completely characterised by the collections $\mathfrak{P}$ of linear inequalities, one for each polyhedron. 
\end{remark}


\subsection{Simplex-lattice expanded degenerations}

Just like the grid expansions in Section \ref{section:gridsub}, this induces an analogous geometric construction, which we will call a \emph{simplex-lattice expanded degeneration}.

The conical subdivision $\Upsilon \to \Sigma_W$ and the change of lattice $N' \subset \Z^r$ induces a modification of $W$ followed by a base change along the alteration $B' \to B$ induced by taking the sublattice $h\Z$ of the cone $\Sigma_B = (\Rgeq, \Z)$. Call the resulting scheme $W_\Upsilon$.
\begin{definition}[Simplex-lattice expanded degeneration] \label{def:expdegen}
	An $n$-marked simplex-lattice expanded degeneration is a weakly semistable degeneration $W_\Upsilon \to B'$ of $X$, induced by an $n$-marked simplex lattice subdivision $(\cal{S},m)$. 
\end{definition}
\begin{remark} \label{rem:special_fibre_expansion}
	The special fibre $Y_\Upsilon$ of $W_\Upsilon$ is an \emph{expansion} over $b_0$ induced by the polyhedral subdivision $\cal{S}$.
\end{remark}

See Figures~\ref{fig:degensub-fam} and \ref{fig:exp_degen} for an example.
                                                                                                       
\begin{figure}[!htb]
	\centering
	\begin{minipage}{.4\textwidth}
		\centering
		\def\svgwidth{.8\textwidth}
		
		\caption{A $2$-marked simplex-lattice subdivision.}
		\label{fig:degensub-fam}
	\end{minipage}\hfill
	\begin{minipage}{0.6\textwidth}
		\centering
		\def\svgwidth{\textwidth}
		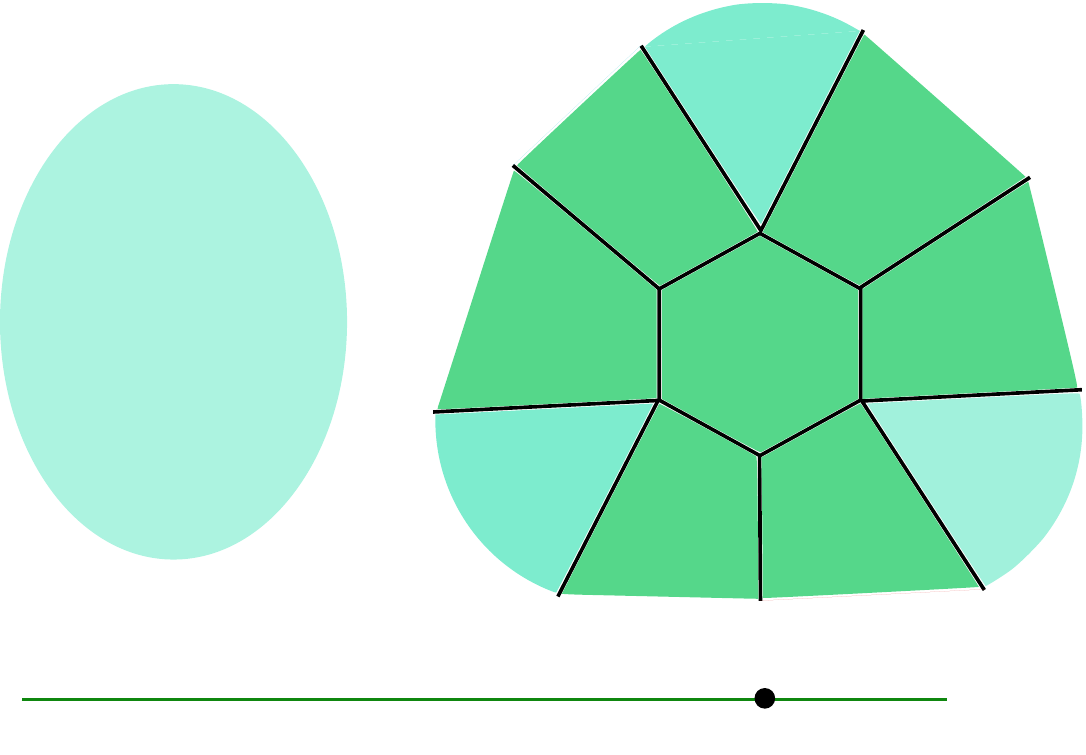
		\caption{A $2$-marked simplex-lattice expanded degeneration.}
		\label{fig:exp_degen}
	\end{minipage}
\end{figure}

\subsection{Tropical moduli space of simplex-lattice subdivisions}
We observe that the tropicalisation of $W^{(n)}_{\omega}$, is ${(\Sigma_W)}_{\Rgeq}^{(n)} = \cone(\Delta^n)$.

We have the following diagram, where $s_1, \ldots, s_n$ are the sections:
\begin{equation}\label{cd:naive}
	\begin{tikzcd}
		\cone(\Delta \times \Delta^n) \arrow[d, "p"']                       \\
		\cone(\Delta^n) \arrow[u, "{s_1,\ldots,s_n}"', bend right] \arrow[d, "h"']\\
		\R_{\geq 0}
	\end{tikzcd}
\end{equation}
The map $p$ is induced by the projection onto the second factor, and $s_i$ is the $i$th section induced by the map $\mathrm{pr}_i \times \mathrm{id}$, where $\mathrm{pr}_i\colon  \Delta^n \to \Delta$ is the projection onto the $i$th factor.

\noindent \textit{Lattices.} 
We take the lattices of $\cone(\Delta^n)$ and $\cone(\Delta \times \Delta^n)$ to be $N \colonequals (\Z^r)^{(n)}_{\Z}$ and $N^+ \colonequals (\Z^r)^{(n+1)}_{\Z}$ respectively. 
%
%
%

We construct the tropical moduli space $\Pi_n(\Delta)$ by a process similar to Section~\ref{section:tropmoduli_grid}. 
\begin{construction} \label{prop: semistable_degen}
	We construct a diagram of cone complexes
	\begin{equation} \label{diag:tropmoddegen}
		\begin{tikzcd} 
			(\tropfamdegen, (N')^+) \arrow[r]\arrow[d, "p"'] & (\cone(\Delta \times 	\Delta^n),(\Z^r)^{(n+1)}_{\Z}) \arrow[d, "p"']                       \\
			(\tropmoddegen, N') \arrow[r] \arrow[d, "h"'] \arrow[u, 	"{s_1,\ldots,s_n}"', bend right] & (\cone(\Delta^n),(\Z^r)^{(n)}_{\Z}) \arrow[u, "{s_1,\ldots,s_n}"', bend right] \arrow[d, "h"']\\
			(\R_{\geq 0},\Z) \arrow[r, "1 \mapsto h_{\mathrm{tot}}"'] & (\R_{\geq 0},\Z).
		\end{tikzcd}
	\end{equation}

	where each horizontal arrow is a subdivision followed by taking a sublattice.
	
	This diagram satisfies:
	\begin{enumerate}
		\item \textbf{\emph{Transversality.}} The image of each $s_i$ is a union of cones in $\tropfamdegen$. 
		\item \textbf{\emph{Combinatorial flatness.}} Every cone of $\tropfamdegen$ surjects under $p$ onto a cone of $\tropmoddegen$, and analogously for $h$.
		\item \textbf{\emph{Combinatorial reducedness.}} The image under $p$ of the lattice of every cone in $\tropfamdegen$ is equal to the lattice of the image cone in $\tropmoddegen$, and analogously for $h$.
	\end{enumerate}
	
\end{construction}
\begin{proof}
	\begin{enumerate}[wide, labelindent=0pt]
		\item \textbf{Subdivision of $\cone(\Delta^n)$ by combinatorial types.}   
		Consider coordinates $u_i^{(j)}$ of $\Sigma_W^n$. By Remark~\ref{rem:combtype_degen}, any combinatorial type $\tau$ is characterised by a collection of linear inequalities in terms of $u_i^{(j)}$. The resulting intersection of half spaces defines a polyhedron $\mu_{\tau}$ in $\Delta^n$. The collection of all such polyhedra forms a polyhedral subdivision $\tropmodpoly$ of $\Delta^n$, and taking the cone over induces a conical subdivision $\tropmoddegen$ of $\cone(\Delta^n)$.

		\smallskip
		\item \textbf{Subdivision of $\cone(\Delta \times \Delta^n)$ for transversality.} 
		The polyhedral complex $\Delta \times \tropmodpoly$ is a polyhedral subdivision of $\Delta \times \Delta^n$. Consider coordinates $x_1, x_2, \ldots, x_r$ of $\Sigma_W$, where $x_1 + x_2 + \cdots + x_r = 1$ on the first factor $\Delta$. The hyperplane arrangement $\{H_{ij}\}_{ij}$ defines a further polyhedral subdivision of $\Delta \times \tropmodpoly$, denoted $\tropfampoly$, and taking cone induces a conical subdivision $\tropfamdegen$ of $\cone(\Delta \times \Delta^n)$.
		
		For each $i$ and any polyhedron $\mu_{\tau}$ in $\tropmodpoly$, the image $s_i(\mu_{\tau})$ is \[
		\bigcap_{j=1}^r H_{ij} \cap \Delta \times \mu_{\tau},\] which is a polyhedron $\sigma$ in $\tropfampoly$. Therefore, $s_i(\cone(\mu_{\tau})) = \cone(\sigma)$, proving that $s_i$ is a morphism of cone complexes. For transversality, note that the image $s_i(\Delta^n)$ is precisely \[
		\bigcap_{j=1}^r H_{ij} \cap \Delta \times \Delta^n,\] which is a union of polyhedra, and we conclude by taking cones.

		\item \textbf{Combinatorial flatness.} 
		Given any non-empty polyhedron $\sigma$ in $\tropfampoly$, since $\tropfampoly$ is a subdivision of $\Delta \times \tropmodpoly$, the image $p(\sigma)$ is contained in a polyhedron in $\tropmodpoly$. Let $\mu_{\tau}$ be the minimal polyhedron containing $p(\sigma)$. It suffices to show that for any tuple $(u_1, u_2, \ldots, u_n) \in \Delta^n$, the polyhedron \[p^{-1}(u_1, u_2, \ldots, u_n) \cap \sigma\] is non-empty, but this follows immediately from Remark~\ref{rem:combtype_degen}. Combinatorial flatness of $h$ is immediate. 

		\smallskip
		\item \textbf{Combinatorial reducedness.} 
		We first show that $p$ is combinatorially reduced. For an arbitrary cone $\sigma$ in $\tropfamdegen$ and its image cone $\mu = p(\sigma)$ in $\tropmoddegen$, above each integral point $(u_1, u_2, \ldots, u_n)$ in $\mu$, the preimage \[p^{-1}(u_1, u_2, \ldots, u_n) \cap \sigma\] is a polyhedron $P$ in the polyhedral complex $p^{-1}(u_1, u_2, \ldots, u_n)$. Each vertex $v$ of $P$ is the intersection of $(r-1)$ coordinate hyperplanes, where each coordinate hyperplane is of the form $x_j=0$ or of the form $x_j = u_i^{(j)}$, so these $(r-1)$ coordinates out of $r$ coordinates of $v$ are integers. But the sum of all $r$ coordinates of $v$ is equal to the \emph{height} $h(u_1, u_2, \ldots, u_n)$, which is an integer. Therefore, the vertex $v$ is integral, and this proves that the image of the lattice of $\sigma$ is the lattice of $\mu$.
		
		However, the map $h\colon  \tropmoddegen \to \Rgeq$ is not combinatorially reduced: there exist cones $\mu$ in $\tropmoddegen$ such that the lattice point $1\in \Z$ is not in the image of the lattice of $\mu$. An example is the combinatorial type of the middle subdivision in Figure~\ref{fig:combtypesdegen}.

		To make $h$ combinatorially reduced, we replace the lattice $\Z$ of $\Rgeq$ by a sublattice. 
		For each cone $\tau$, there is a minimum height $h_\tau \in \N$ such that there exist integral points $(u_1, u_2, \ldots, u_n) \in \tau$ with height $h_\tau$. Since there are finitely many cones in $\tropmoddegen$, we can consider the lowest common multiple of $h_\tau$ over all cones $\tau$, denoted $h_{\mathrm{tot}}$. We replace the lattice $\Z$ of $\Rgeq$ by $h_{\mathrm{tot}}\Z$.
		
		We replace the lattices $N$ and $N^+$ with the sublattices $N' \colonequals N \times_{\Z} h_{\mathrm{tot}}\Z$ and $(N')^+ \colonequals N^+ \times_{\Z} h_{\mathrm{tot}}\Z$.
		
		
		We check that $p$ is still combinatorially reduced. For a point $(u_1, u_2, \ldots, u_n) \in \tropmoddegen$ with integral coordinates and height divisible by $h_{\mathrm{tot}}$, the points of $p^{-1}(u_1, u_2, \ldots, u_n)$ has the same height which is also divisible by $h_{\mathrm{tot}}$. Therefore, the previous argument proving $p$ is combinatorially reduced still holds.

	\end{enumerate}	
\end{proof}

We therefore have a diagram of cone complexes:

\begin{equation} \label{diag:tropmoddegen_sigma}
		\begin{tikzcd}
				\tropfamdegen \arrow[d, "p"'] \arrow[r] & \S_W                       \\
				\tropmoddegen \arrow[u, "{s_1,\ldots,s_n}"', bend right]\arrow[d, "h"'] & \\
				\R_{\geq 0}.
		\end{tikzcd} 
\end{equation} 

For simplicity of notation, we will write $\rho$ to denote a combinatorial type and its polyhedron.
\subsection{Stable $n$-pointed expanded degenerations}
From now on, expanded degenerations denote simplex-lattice expanded degenerations.

By the equivalence between the 2-category of cone stacks and the 2-category of Artin fans \cite[Theorem 6.11]{CCUW}, Diagram~(\ref{diag:tropmoddegen}) corresponds to an analogous diagram of cone complexes below. Here $\A$ denotes the stack $[\AA^1/\Gm]$ (the Artin fan of $B$), and $\A_W$ denotes the stack $[\AA^r/\Gm^r]$ (the Artin fan of $W$).
\begin{equation} \label{diag:simpexp}
	\begin{tikzcd} 
		\simpexpfam \arrow[r]\arrow[d, "p"'] & (\A_W)_\A^{(n+1)} \arrow[d, "p"']                       \\
		\simpexp \arrow[r] \arrow[d, "h"'] \arrow[u, 	"{s_1,\ldots,s_n}"', bend right] & (\A_W)_\A^{(n)} \arrow[u, "{s_1,\ldots,s_n}"', bend right] \arrow[d, "h"']\\
		\hat{\A} \arrow[r, "t \mapsto t^{h_{\mathrm{tot}}}"'] & \A.
	\end{tikzcd}
\end{equation}

We replace $\A$ with $\hat{\A}$, and take the corresponding base change of $B$ and $W$ throughout. The Artin fan $\simpexp$ is the \emph{stack of $n$-marked expanded degenerations over $\A$} with universal family $p\colon \simpexpfam \to \simpexp$.

We obtain $\simpexp_B$ and $\simpexpfamW$ from the base change of $\simpexp \to \A$ along $B \to \A$, and the base change of $\simpexpfam \to \A_W$ along $W \to \A_W$ respectively.


\begin{proposition} \label{prop:simpexp}
	The stack $\simpexp_B$ is the stack over $B$ of $n$-marked expanded degenerations of $W$ over $B$, together with a morphism $\simpexp_B \to B$ and a universal family $\eta\colon \simpexpfamW \to \simpexp_B$ that are both logarithmically smooth, flat and have reduced fibres, with $n$ sections $e_1, e_2, \ldots, e_n$. The morphism $\eta$ is representable.
\end{proposition}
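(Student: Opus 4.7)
My plan is to deduce Proposition~\ref{prop:simpexp} directly from the tropical Proposition~\ref{prop: semistable_degen} by translating combinatorial data to geometric data via the equivalence of 2-categories (Theorem~\ref{thm:CCUW}), and then specialising via strict base change along the schematic morphisms $B \to \hat\A$ and $W \to \A_W$. This runs in parallel to the construction of $\X \to \gridexp$ and its pullback to $\logProd$ in Section~\ref{section:logProd}.

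I would first construct $\simpexp_B$ and $\simpexpfamW$ as the strict (fs-)pullbacks of $\simpexp \to \hat\A$ along $B \to \hat\A$ and of $\simpexpfam \to \A_W$ along $W \to \A_W$, respectively. The morphism $\eta\colon \simpexpfamW \to \simpexp_B$ and the sections $e_i$ are then obtained by the universal properties of the fibre products together with the morphisms in Diagram~(\ref{diag:simpexp}). For the moduli interpretation, a morphism $T \to \simpexp_B$ from a logarithmic scheme $T$ over $B$ corresponds, via Theorem~\ref{thm:CCUW}, to a cone-stack morphism $\Sigma_T \to \tropmoddegen$ compatible with the height map to $\Rgeq$. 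By the tautological bijection between tuples of points on $\Delta$ and $n$-marked simplex-lattice subdivisions, this is equivalent to an $n$-marked simplex-lattice subdivision over $T$, and pulling back $\eta$ along this datum recovers the associated $n$-marked expanded degeneration in the sense of Definition~\ref{def:expdegen}.

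The required analytic properties are then extracted from Proposition~\ref{prop: semistable_degen}. Logarithmic smoothness of $\simpexp \to \hat\A$ and of $\simpexpfam \to \simpexp$ is automatic for the morphisms of toric Artin fans produced by $a^*$ applied to the combinatorially reduced tropical maps. Flatness and reducedness of the fibres of $\simpexpfam \to \simpexp \to \hat\A$ are precisely the content of combinatorial flatness and combinatorial reducedness via the Abramovich--Karu weak semistable reduction criterion \cite[Theorem 0.3]{semistable}; this is exactly the geometric meaning of the lattice rescaling by $h_{\mathrm{tot}}$. All three properties are preserved by strict base change, so they descend to $\simpexp_B \to B$ and to $\eta$. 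Representability of $\eta$ then follows from the fact that the pullback of any morphism of Artin fans along a strict morphism from a scheme yields a schematic morphism; applied here to the strict morphism $W \to \A_W$, this collapses the stacky fibres of $\simpexpfam \to \simpexp$.

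The main obstacle, which I expect to be essentially bookkeeping, is the careful verification of the moduli interpretation: matching the tropical sublattice adjustment by $h_{\mathrm{tot}}$ on $\Rgeq$ with the base change to $\hat\A$, and checking that fs-fibre products commute with formation of the universal family. Otherwise, the proof is structurally parallel to Theorem~\ref{thm:logprod}, with Proposition~\ref{prop: semistable_degen} replacing Proposition~\ref{prop:tropmoduli_grid} and the base change to $W \to B$ replacing the one to $X^n$.
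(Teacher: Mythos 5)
Your overall route is the same as the paper's: both construct $\simpexp_B$ and $\simpexpfamW$ by base change from the Artin-fan level and read off log smoothness, flatness and reducedness from the combinatorial properties established in Proposition~\ref{prop: semistable_degen}. However, two of your steps are not correct as stated.

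First, the flatness citation. You invoke Abramovich--Karu weak semistable reduction \cite[Theorem 0.3]{semistable} to pass from combinatorial flatness to flatness. That criterion requires the base of the weakly semistable morphism to be nonsingular, equivalently the base cone complex to be simplicial with smooth cones. But $\tropmoddegen$ is \emph{not} simplicial in general --- the paper explicitly records this (e.g.\ the remark that $\pointsdegen$ is singular for $r=3$, $n=2$ because $\tropmoddegen$ is non-simplicial). This is precisely why the paper's proof instead appeals to Molcho \cite[Theorem 2.1.4]{uni_semistable} and Tsuji \cite[Theorem II.4.2]{tsuji}, which yield flatness of a logarithmically smooth morphism from combinatorial flatness of its tropicalisation even over a singular (non-simplicial) base. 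As written, your argument has a gap at exactly the point the paper is careful about.

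Second, your representability principle is false: it is not true that the pullback of an arbitrary morphism of Artin fans along a strict morphism from a scheme is schematic. Indeed $\simpexp_B = \simpexp \times_{\A} B$ is itself obtained by such a pullback and is still a stack (it contains the substacks $\B T_{\tau}$ over $b_0$), so ``collapsing stacky fibres'' does not happen automatically. What makes $\eta$ representable is specific to its fibre direction: $\simpexpfam \to \simpexp$ is a modification of $\simpexp \times_{\A} \A_W$ relative to $\simpexp$, and replacing the factor $\A_W$ by $W$ via the strict morphism $W \to \A_W$ produces fibres that are actual expanded degenerations, i.e.\ schemes --- which is the (one-line) argument the paper gives. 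Your log smoothness step, while loosely attributed to combinatorial reducedness, reaches the right conclusion; the paper derives it more directly from the fact that $\simpexp$ is a logarithmic modification of $(\A_W)_{\A}^{(n)}$ and that $W \to B$ is logarithmically smooth.
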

\begin{proof}
	As $\simpexp$ is a logarithmic modification of $(\A_W)_{\A}^{(n)}$, the morphism $\simpexp_B \to B$ is a base change of the logarithmically smooth morphism $(\A_W)_{\A}^{(n)} \to \A$, which is logarithmically smooth. Similarly, logarithmic smoothness of $\eta$ follows from the logarithmic smoothness of $W \to B$, and the fact that logarithmic modifications are logarithmically smooth. By the result of Molcho \cite[Theorem 2.1.4]{uni_semistable} and Tsuji \cite[Theorem II.4.2]{tsuji}, even for non-simplicial $\tropmoddegen$, the combinatorial flatness of $\tropfamdegen \to \tropmoddegen$ guarantees flatness of $\eta$. Assuming an appropriate base change of $B$ and $W$, reducedness of fibres of $\eta$ follows from combinatorial reducedness. It is representable because the fibres of $\eta$ are families of expanded degenerations of $W$ over $B$. 
\end{proof}

\begin{definition} \label{def:pointedexpdegen}
	An \emph{$n$-pointed expanded degeneration $(W_\Upsilon, q_1, q_2, \ldots, q_n)$ of $W$ over $B$} is an $n$-marked expanded degeneration $(W_\Upsilon, (W_\Upsilon)_0) \to (B,b_0)$ together with $n$ sections $q_1, q_2, \ldots, q_n$, such that for each $i$, $q_i(b_0)$ lies on the the smooth locus of the special fibre. It is \emph{stable} if for each $i$, $q_i(b_0)$ lies on the $m(i)$th irreducible component of the special fibre. 
\end{definition}

\begin{figure}[!htb]
	\centering
	\begin{minipage}{.4\textwidth}
		\centering
		\def\svgwidth{.8\textwidth}
		
		\caption{A $2$-marked simplex-lattice subdivision.}
		\label{fig:degensub}
	\end{minipage}\hfill
	\begin{minipage}{0.6\textwidth}
		\centering
		\def\svgwidth{\textwidth}
		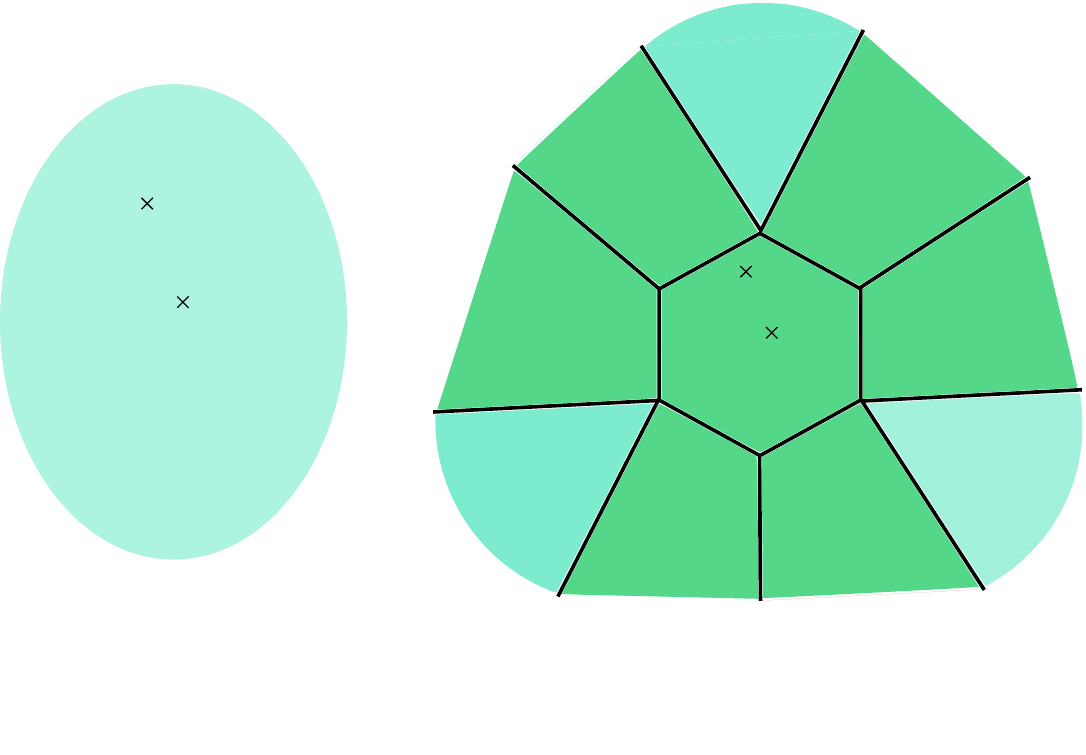
		\caption{A stable $2$-pointed simplex-lattice expanded degeneration.}
		\label{fig:pointed_exp_degen}
	\end{minipage}
\end{figure}

\begin{definition}
	A \emph{family of $n$-pointed expanded degenerations of $W \to B$} over a $B$-scheme $S$ is a family $\cal{W}_S \to S$ of $n$-marked expanded degenerations over $S$ pulled back from the universal expansion along some morphism $f: S\to \simpexp_B$, and $n$ sections \[s_1, s_2, \ldots, s_n\colon  S \to \cal{W}_S.\] It is \emph{stable} if:
	\begin{enumerate}
		\item for every section $\xi\colon B \to S$ of $S \to B$, the fibre $(\cal{W}_\xi, s_{i,\xi})$ is a stable $n$-pointed expanded degeneration of $W$ over $B$; 
		\item the morphisms $e_i \circ f$ and $\hat{f} \circ s_i$ 2-commute -- see diagram below.
	\end{enumerate}
	
	\begin{equation*}
		\begin{tikzcd}
			\cal{W}_S \arrow[r, "\hat{f}"] \arrow[d] \pullback{rd} & \simpexpfamW \arrow[r] \arrow[d] & W \arrow[d]\\
			S \arrow[u, "{s_1, s_2, \ldots, s_n}", bend left] \arrow[r, "f"'] & \simpexp_B \arrow[u, "{e_1, e_2, \ldots, e_n}"', bend right] \arrow[r] & B
		\end{tikzcd}
	\end{equation*}
\end{definition}

We define morphisms between families of stable $n$-pointed expanded degenerations using an analogue of Lemma~\ref{lem:2-iso}.

\begin{lemma}
	A 2-isomorphism $\lambda$ between two morphisms $f_1\colon S\to \simpexp_B$ and $f_2\colon S \to \simpexp_B$ induces a unique isomorphism on the pullbacks $\lambda^{\dagger}\colon  \cal{W}_{f_1} \to \cal{W}_{f_2}$. 
\end{lemma}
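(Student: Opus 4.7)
The plan is to extract $\lambda^{\dagger}$ from the universal property of the 2-fibre product, in complete analogy with Lemma~\ref{lem:2-iso}. Recall that $\cal{W}_{f_i}$ is defined by the 2-Cartesian square
\begin{equation*}
    \begin{tikzcd}
        \cal{W}_{f_i} \arrow[r, "\hat{f}_i"] \arrow[d] \pullback{rd} & \simpexpfamW \arrow[d, "\eta"]\\
        S \arrow[r, "f_i"'] & \simpexp_B.
    \end{tikzcd}
\end{equation*}
In particular, each $\cal{W}_{f_i}$ is equipped with a structural 2-isomorphism $\alpha_i\colon f_i \circ \pi_i \Rightarrow \eta \circ \hat{f}_i$, where $\pi_i\colon \cal{W}_{f_i} \to S$ is the first projection, and the pair $(\cal{W}_{f_i}, \alpha_i)$ is universal among such data.

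First I would construct $\lambda^{\dagger}$. Starting from the data $(\pi_1\colon \cal{W}_{f_1} \to S, \, \hat{f}_1\colon \cal{W}_{f_1} \to \simpexpfamW)$, form the 2-isomorphism
\[
f_2 \circ \pi_1 \;\xRightarrow{\;\lambda^{-1} \ast \pi_1\;}\; f_1 \circ \pi_1 \;\xRightarrow{\;\alpha_1\;}\; \eta \circ \hat{f}_1.
\]
The universal property of the 2-fibre product defining $\cal{W}_{f_2}$ then yields a morphism $\lambda^{\dagger}\colon \cal{W}_{f_1} \to \cal{W}_{f_2}$ over $S$, together with canonical 2-isomorphisms $\pi_2 \circ \lambda^{\dagger} \Rightarrow \pi_1$ and $\hat{f}_2 \circ \lambda^{\dagger} \Rightarrow \hat{f}_1$ compatible with the above. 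Uniqueness up to unique 2-isomorphism is automatic from the universal property, which gives the ``unique'' claim in the statement.

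Next I would check that $\lambda^{\dagger}$ is an isomorphism. Applying the same construction to $\lambda^{-1}\colon f_2 \Rightarrow f_1$ produces a morphism $(\lambda^{-1})^{\dagger}\colon \cal{W}_{f_2} \to \cal{W}_{f_1}$. Composing the two, both $(\lambda^{-1})^{\dagger} \circ \lambda^{\dagger}$ and $\mathrm{id}_{\cal{W}_{f_1}}$ witness the universal property of $\cal{W}_{f_1}$ with respect to the identity 2-isomorphism $f_1 \Rightarrow f_1$; by uniqueness they are canonically 2-isomorphic, and similarly on the other side. Hence $\lambda^{\dagger}$ is an isomorphism, as required.

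I expect no serious obstacle here: the whole argument is formal manipulation in the 2-category of fs logarithmic stacks, and the only subtlety is keeping track of the strict/fs pullback distinction. Since $\eta\colon \simpexpfamW \to \simpexp_B$ is representable (Proposition~\ref{prop:simpexp}) and the defining square is an fs-pullback along a strict morphism (as in Diagram~\ref{cd:artin_subdivision}), the 2-fibre product in fs logarithmic stacks computes $\cal{W}_{f_i}$ on the nose, so the universal property applies verbatim. This is entirely parallel to the proof of Lemma~\ref{lem:2-iso} borrowed from \cite[Lemma 2.9]{rubbertori}, and the tropical interpretation of $\lambda^{\dagger}$ as a rubber-type action of the torus $T_{\tau}$ attached to the cone $\tau$ through which $f_1, f_2$ factor (cf.\ Remark~\ref{rem:rubber_action}) will be relevant in the subsequent description of boundary strata.
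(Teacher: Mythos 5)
Your argument is correct: extracting $\lambda^{\dagger}$ from the universal property of the 2-fibre product, inverting it via the same construction applied to $\lambda^{-1}$, and using representability of $\eta$ to upgrade ``unique up to unique 2-isomorphism'' to genuine uniqueness is exactly the standard argument. The paper itself supplies no proof of this lemma --- it is stated as an analogue of Lemma~\ref{lem:2-iso}, which is in turn quoted from \cite[Lemma 2.9]{rubbertori} --- so your write-up simply fills in the formal verification the paper delegates to that reference.
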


\begin{definition}
	A \emph{morphism} between families of stable $n$-pointed expanded degenerations $(f_1\colon  S_1 \to \simpexp_B)$ and $(f_2\colon  S_2 \to \simpexp_B)$ consists of
	\begin{enumerate}
		\item a morphism $\psi\colon S_1 \to S_2$ fitting into the 2-commutative diagrams
		\begin{equation*} 
			\begin{tikzcd}
				& \cal{W}_{f_2} \arrow[rd] \arrow[d]& \\                               
				& S_2 \arrow[rd, "f_2", ""{name=U,inner sep=1pt,below}] \arrow[u, "{s_{2,1}, s_{2,2}, \ldots, s_{2,n}}"', bend right]    & \simpexpfamW \arrow[d] \\
				\cal{W}_{f_1} \arrow[rru] \arrow[d] \arrow[ruu, "\Psi \circ \lambda^{\dagger}"]  &           & \simpexp_B  \\
				S_1 \arrow[rru, "f_1"', ""{name=D,inner sep=1pt}] \arrow[ruu, "\psi"'] \arrow[u, "{s_{1,1}, s_{1,2}, \ldots, s_{1,n}}", bend left] \arrow[Rightarrow, from=D, to=U, "\lambda"]&                                                                          &                     
			\end{tikzcd}
		\end{equation*}
		\item a 2-isomorphism $\lambda\colon  f_1 \Rightarrow f_2 \circ \psi$ and equalities $(\Psi \circ \lambda^\dagger) \circ s_{1,i} = s_{2,i} \circ \psi$, where $\Psi\colon \psi^*\cal{W}_{f_2} \to \cal{W}_{f_2}$ is the canonical morphism.
	\end{enumerate}
\end{definition}
We then define isomorphisms as follows:
\begin{definition}
	An isomorphism between stable $n$-pointed expanded degenerations $(f_1\colon S_1 \to \simpexp_B)$ and $(f_2\colon S_2 \to \simpexp_B)$ is the data of a morphism $(\psi, \lambda)$ such that $\psi\colon S_1 \to S_2$ is an isomorphism. 
\end{definition}
Note that $f_1$ and $f_2$ above necessarily factor through some substack $\B T_{\cone(\tau)}\times_{\A} B$ of $\simpexp_B$, where $\tau$ is some combinatorial type. 
We will see in Section~\ref{subsection:rubbertori_degen} that stable $n$-pointed expanded degenerations have no non-trivial automorphisms.

\subsection{Rubber actions and stability} \label{subsection:rubbertori_degen}
Like in Section~\ref{subsection:rubbertori}, the rubber actions give a geometric characterisation of the isomorphisms of stable $n$-pointed expanded degenerations.

Given a polyhedron $\tau$ corresponding to a combinatorial type, let $N_{\tau}$ be the kernel of the lattice map $N_{\cone(\tau)} \to N_{\Sigma_B} = \Z$, and let $T_{\tau}$ denote the \emph{rubber torus} $T_{\tau} \colonequals  N_{\tau} \otimes \Gm$. We note that \[(\B T_{\cone(\tau)})_B \colonequals  \B T_{\cone(\tau)}\times_{\A} B = \B T_{\cone(\tau)}\times_{\B \Gm} b_0,\] and this stack coincides with $\B T_{\tau}$ over the point $b_0$.  
 
Let $U_{\cone(\tau)}$ be the affine toric variety corresponding to $\cone(\tau)$, and $W_{\tau}$ be the logarithmic modification of $W \times_\A U_{\cone(\tau)}$ induced by the subdivision $(\Sigma_W \times_{\Rgeq} \cone(\tau))^{\dagger} \subset \tropfamdegen$. Let $Y_{\tau}$ be the fibre of $W_{\tau} \to U_{\cone(\tau)}$ over the torus fixed point. The restriction of the family $\simpexpfamW$ to the closed substack $\B T_{\tau}$ is \[\simpexpfamW_{\tau} = [Y_{\tau}/T_{\tau}] \to \B T_{\tau}.\] 

Consider two morphisms $f_1\colon S \to \simpexp_B$ and $f_2\colon  S \to \simpexp_B$ both factoring through the substack $\B T_{\tau}$. An element $\lambda \in \B T_{\tau}$ induces a fibrewise \emph{rubber action} $\lambda^{\dagger}\colon  \cal{W}_{f_1} \to \cal{W}_{f_2}$, as follows. Let $\varphi_v\colon N_{\tau} \to N_{\Delta}$ be the tropical position map, and $N_{\Delta}$ be the kernel of the lattice map $N_{\Sigma_W} \to N_{\Sigma_B} = \Z$, then  
the \emph{rubber torus} $T_{\tau}$ acts on each expanded component $Y_v$ by $\varphi_v \otimes \Gm$.

Similarly to Observation~\ref{obs:stability}, as the rubber action on the points is free, the stability condition in Definition~\ref{def:pointedexpdegen} guarantees that there are no non-trivial automorphisms.

\begin{figure}[!htb]
	\centering
	\def\svgwidth{\textwidth}
	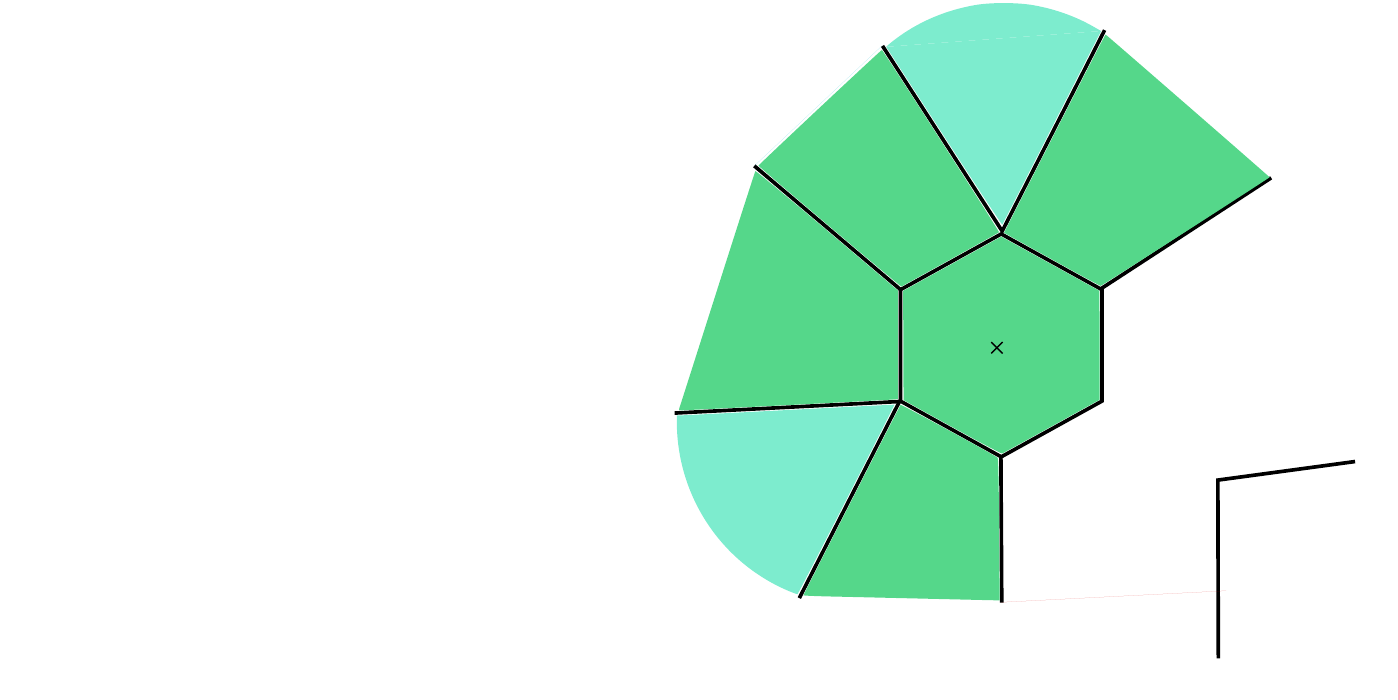
	\caption{The marked points on the subdivision are $u_1 = (t_1, 1-t_2, t_2-t_1)$ and $u_2 = (0, 1-t_2, t_2)$. The position of each vertex $v$ is determined by a number of independent parameters, and after tensoring by $\Gm$, these parameters determine the torus action on the component corresponding to $v$.}
	\label{fig:degen_rubber}
\end{figure}

\subsection{Construction of the moduli space of points $\pointsdegen$}

The goal of this subsection is to construct the space $\pointsdegen$, and to show that $\pointsdegen$ is the moduli space of stable $n$-pointed expanded degenerations equipped with a flat universal family $\pointsdegenfam \to \pointsdegen$.

\begin{definition}
	The scheme $\pointsdegen$ is the fibre product
	\begin{equation*}
		\begin{tikzcd}
			\pointsdegen \arrow[r] \arrow[d] \pullback{rd} & \simpexp_B \arrow[d] \\
			W_{\omega}^{(n)} \arrow[r] & (\A_W)_{\A}^{(n)} \times_{\A} B.
		\end{tikzcd}
	\end{equation*}
\end{definition}

\begin{definition}
	The scheme $\pointsdegenfam$ is constructed as the fibre product
	\begin{equation*}
		\begin{tikzcd}
			\pointsdegenfam \arrow[r] \arrow[d, "\text{flat}"] \pullback{rd} & \simpexpfamW \arrow[d, "\text{flat}"']\\
			\pointsdegen \arrow[r] \arrow[u, "{\mathbf{s}_1, \mathbf{s}_2, \ldots, \mathbf{s}_n}", bend left] & \simpexp_B \arrow[u, "{e_1, e_2, \ldots, e_n}"', bend right].
		\end{tikzcd}
	\end{equation*}
\end{definition}

\begin{remark} \label{rem:log-smooth}
	The morphism $\pointsdegenfam \to \pointsdegen$ is flat and logarithmically smooth because it is a pullback of the flat and logarithmically smooth universal family $\simpexpfamW \to \simpexp_B$. 
\end{remark}

\begin{remark}
	We note that $\pointsdegenfam$ is isomorphic to the fibre product
	\begin{equation*}
		\begin{tikzcd}
			\pointsdegenfam \arrow[r] \arrow[d] \pullback{rd} & \simpexpfam \arrow[d]\\
			W_{\omega}^{(n+1)} \arrow[r, "\text{strict}"] & (\A_W)^{(n+1)}_\A,
		\end{tikzcd}
	\end{equation*}
	hence is a logarithmic modification of $W_{\omega}^{(n+1)}$.
\end{remark}

We now state and prove the main theorem of this section.
\begin{theorem}
	Given a proper, simple normal crossings degeneration $\omega\colon  W \to B$ of $X$, the morphism $\pointsdegen \to B$ is a proper, flat, logarithmically smooth degeneration of $X^n$ with reduced fibres. The scheme $\pointsdegen$ represents the moduli stack over $B$ of stable $n$-pointed expanded degenerations of $W$. It is equipped with a universal family $\pointsdegenfam \to \pointsdegen$ which is flat, has reduced fibres, and has $n$ sections $\mathbf{s}_1, \mathbf{s}_2, \ldots, \mathbf{s}_n$. 
\end{theorem}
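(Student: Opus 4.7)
The plan is to mirror the proof of Theorem~\ref{thm:logprod}, assembling the result from Proposition~\ref{prop:simpexp} together with the universal property of the fibre product defining $\pointsdegen$. First I would observe that the morphism $W^{(n)}_{\omega} \to (\A_W)_{\A}^{(n)} \times_\A B$ is strict and smooth, being the $n$-fold $B$-fibre product of the strict smooth characteristic morphism $W \to \A_W \times_\A B$. Since fs-pullbacks along strict morphisms agree with ordinary pullbacks, the base change $\pointsdegen \to \simpexp_B$ is itself strict and smooth; combined with Proposition~\ref{prop:simpexp}, which provides that $\simpexp_B \to B$ is logarithmically smooth, flat, and has reduced fibres, we conclude that $\pointsdegen \to B$ shares all three properties. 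Properness of $\pointsdegen \to B$ follows from properness of the logarithmic modification $\simpexp \to (\A_W)_{\A}^{(n)}$, a consequence of $\tropmoddegen$ being a complete subdivision of $\cone(\Delta^n)$, together with properness of $W^{(n)}_\omega \to B$. The generic fibre of $\pointsdegen \to B$ is $X^n$: over $B \setminus \{b_0\}$ the characteristic morphism to $\A$ lands in the open point, so $\simpexp_B$ agrees with $(\A_W)_{\A}^{(n)} \times_\A B$ there, and $\pointsdegen$ restricts to $W^{(n)}_\omega|_{B \setminus \{b_0\}}$, which is $X^n \times (B \setminus \{b_0\})$.

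Next I would treat the universal family $\pointsdegenfam \to \pointsdegen$, which is by construction a pullback of $\simpexpfamW \to \simpexp_B$ and hence inherits flatness, reducedness of fibres, and the $n$ sections $\mathbf{s}_i$ from the universal sections $e_i$. For the modular statement, a family of stable $n$-pointed expanded degenerations over a $B$-scheme $S$ consists of a classifying morphism $f\colon S \to \simpexp_B$ together with sections $s_i\colon S \to \cal{W}_S$. Composing the $s_i$ with the projection $\cal{W}_S \to W$ yields $s\colon S \to W^{(n)}_\omega$, and the 2-commutativity $e_i \circ f \simeq \hat{f} \circ s_i$ ensures that $f$ and $s$ agree after mapping into $(\A_W)_{\A}^{(n)} \times_\A B$. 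The universal property of the fibre product then supplies a unique morphism $S \to \pointsdegen$, with the converse assignment tautological, and isomorphisms of families correspond exactly to equalities of the induced morphisms. The absence of non-trivial automorphisms of the objects themselves is guaranteed by the freeness of the rubber action on the chosen points, as established in Section~\ref{subsection:rubbertori_degen}.

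The main technical subtlety will be tracking the lattice adjustment replacing $\A$ by $\hat{\A}$ via $t \mapsto t^{h_{\mathrm{tot}}}$ and the implicit base change of $B$ and $W$ throughout the diagram of Artin fans. This adjustment is precisely what produces the combinatorial reducedness of $\tropmoddegen \to \Rgeq$ invoked in Proposition~\ref{prop:simpexp}, so every pullback appearing in the construction of $\pointsdegen$ must be formed after this base change for the reducedness-of-fibres conclusion to propagate cleanly to the universal family over $\pointsdegen$.
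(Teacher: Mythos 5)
Your proposal is correct and follows essentially the same route as the paper: the modular statement is deduced from the universal property of the 2-fibre product defining $\pointsdegen$, and the geometric properties are inherited from Proposition~\ref{prop:simpexp} because $\pointsdegen$ and $\pointsdegenfam$ are pullbacks along the strict (smooth) characteristic morphisms. Your additional details — making the strictness/smoothness of $W^{(n)}_{\omega} \to (\A_W)_{\A}^{(n)} \times_\A B$ explicit, supplying the properness argument from completeness of the subdivision, and flagging the $h_{\mathrm{tot}}$ base change — merely flesh out steps the paper leaves implicit.
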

\begin{proof}
	
	To prove the second statement, we first observe that a stable $n$-pointed expanded degeneration $(f\colon  S \to \simpexp_B, s_1, s_2, \ldots, s_n\colon  S \to \cal{W}_S)$ induces a morphism $S \to W_{\omega}^{(n)}$, which induces a morphism $F\colon  S \to \pointsdegen$. We then note that two stable $n$-pointed expanded degenerations are isomorphic if and only if they induce the same morphism to $\pointsdegen$, again by the universal property of 2-fibre products of stacks.
	
	As the morphisms $\pointsdegen \to B$ and $\pointsdegenfam \to \pointsdegen$ are pulled back from $\simpexp_B \to B$ and $\simpexpfamW \to \simpexp_B$, the first and the last statements follow from Proposition~\ref{prop:simpexp}.
\end{proof}

\begin{remark}
	As opposed to the space $\logProd$, the space $\pointsdegen$ may be singular, e.g. for $r=3$ and $n=2$. In this example, the cone complex $\tropmoddegen$ is not simplicial.
\end{remark}

\subsubsection{Recursive description of diagonals}
Let $I$ be a subset of $\oneton$. Let $\delta \colon  W \hookrightarrow W^I_{\omega}$ be the diagonal morphism to the $I$-fold fibre product $W^I_{\omega}$ over $B$.

Consider the $I$th-\emph{diagonal over $\omega$}, namely \[\Delta_{\omega}(I) \colonequals  \delta(W)\times_B W^{\oneton \backslash I}_{\omega} \subset W^{(n)}_{\omega}.\] There is a canonical identification $\Delta_{\omega}(I) \cong W_{\omega}^{(n-|I|+1)}$, and we endow $\Delta_{\omega}(I)$ with the logarithmic structure on $W_{\omega}^{(n-|I|+1)}$, so that its tropicalisation is $(\Sigma_W)_{\Rgeq}^{(n-|I|+1)} = \cone(\Delta^{n-|I|+1})$.

Consider its strict transform $\delta_{\omega}(I)$ under the modification $\pointsdegen \to W_{\omega}^{(n)}$. We have the following analogue of Proposition~\ref{prop:diagonal}:
\begin{proposition} \label{prop:diagonal_degen}
	The strict transform $\delta_{\omega}(I)$ is the locus in $\pointsdegen$ where points indexed by $I$ coincide, i.e. the \emph{$I$-th diagonal} in $\pointsdegen$. 
\end{proposition}
\begin{proof}
	The proof is analogous to that of Proposition~\ref{prop:diagonal}.
\end{proof}

\subsection{Description of boundary strata} \label{section:boundary_degen}

The combinatorial types endow $\pointsdegen$ with a stratification, with the codimension of strata matching the dimension of the corresponding cone. The stratum $Z_{\tau}$ is the preimage of $\B T_{\tau} \hookrightarrow \simpexp$. We denote by $Z_{\tau}^+$ the restriction of the family $\pointsdegenfam \to \pointsdegen$ to $Z_{\tau}$.

\begin{definition}
	A \emph{rigid combinatorial type} is a combinatorial type $\rho$ whose associated polyhedron in $\tropmodpoly$ is a vertex.
\end{definition}

\begin{remark}\label{rem:rigid_rubber_trivial}
	For a rigid type $\rho$, since the polyhedron $\rho$ is a vertex, the rubber torus $T_{\rho}$ is trivial. The family over the point $\B T_{\rho} = \pt$, as described in Section~\ref{subsection:rubbertori_degen}, is an expansion
	\[\simpexpfam_{\rho} = Y_{\rho} \to \B T_{\rho} = \pt. \]
\end{remark}

We observe that the boundary divisor of $\pointsdegen$ is the sum of the irreducible components of the special fibre of $\pointsdegen \to B$, and they correspond to the strata with rigid combinatorial types. Figure~\ref{fig:combtypesdegen} are some examples of combinatorial types for $r = 3$, where $\Delta$ is an equilateral triangle.

\begin{figure}[!htb]
	\centering
	\def\svgwidth{\textwidth}
	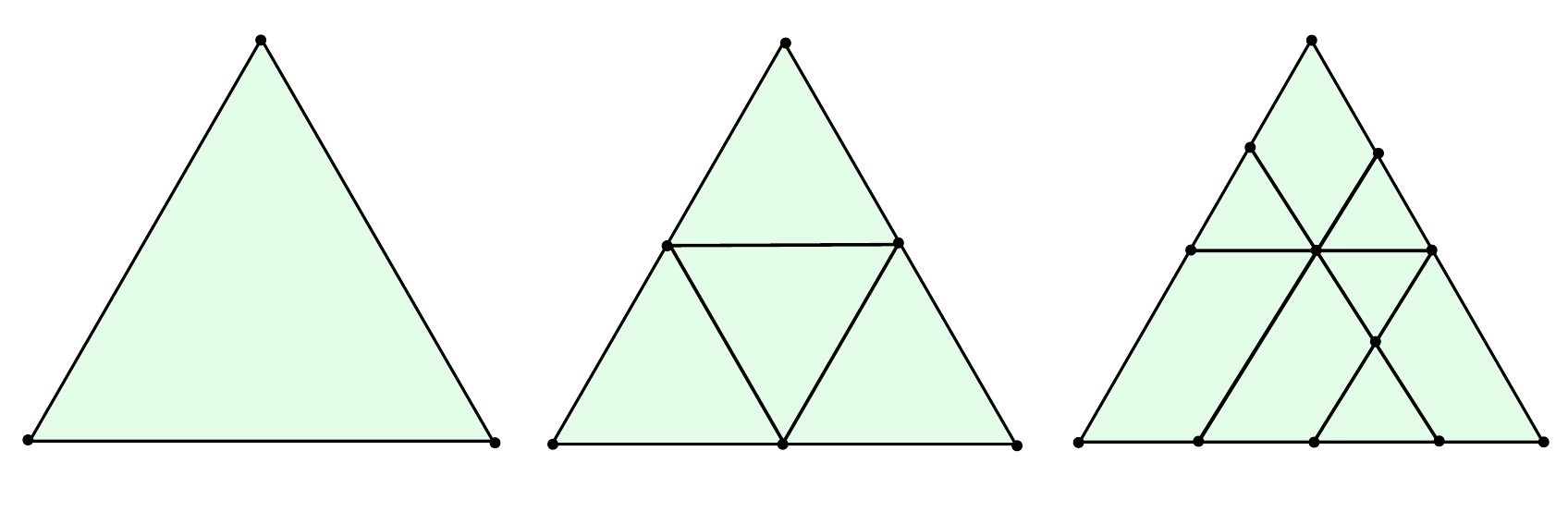
	\caption{The first two combinatorial types are rigid, whereas the third is not rigid.}
	\label{fig:combtypesdegen}
\end{figure}

We will prove in Section~\ref{section:degeneration} a ``gluing formula" for $\pointsdegen(\rho)$, in the style of \cite[Section 11]{log_enum}.

\section{Degenerations of Fulton--MacPherson spaces} \label{section:degeneration}
The goal of this section is to address Question~\ref{question:FMdegen}. Given a simple normal crossings degeneration $\omega\colon  W \to B$  of $X$, we now use constructions in the preceding sections to construct a degeneration of the Fulton--MacPherson configuration spaces $\FM$.
\subsection{Fulton--MacPherson expanded degenerations}
\begin{remark}
	Remark~\ref{rem:special_fibre_expansion} says that the special fibre $Y_{\cal{S}}$ of an expanded degeneration $W_{\Upsilon}$ is an expansion. As noted in Remark~\ref{rem:FMexp}, we extend Definition~\ref{def:FMgridexp} to define stable $n$-pointed FM expansions, where the underlying expansions are of the form $Y_{\cal{S}}$.
\end{remark}

\begin{construction} \label{const:FMexpdegen}
	Let $k$ be a non-negative integer. We consider an iterative construction $W_{\Upsilon}^{(k)}$, where $W_{\Upsilon}^{(0)}$ is the expanded degeneration $W_{\Upsilon}$.
	For $k\geq 1$, we let $W_{\Upsilon}^{(k)}$ be either \[\bl{x}{W_{\Upsilon}^{(k-1)}} \coprod_{\PP(T_x W_{\Upsilon}^{(k-1)})} \PP(T_x W_{\Upsilon}^{(k-1)} \oplus \mathbbm{1}),\] where $x$ is a section of $W_{\Upsilon}^{(k-1)} \to B$, supported on the smooth locus of each fibre of $W_{\Upsilon}^{(k-1)} \to B$; or \[\bl{y}{W_{\Upsilon}^{(k-1)}},\] where $y$ is a point on the special fibre of $W_{\Upsilon}^{(k-1)}$, supported on the smooth locus.
	
	The special fibre of $W_{\Upsilon}^{(k)} \to B$ is an FM expansion with underlying expansion the special fibre of $W_{\Upsilon}$.
\end{construction}
\begin{definition} \label{def:FMdegen}
	An \emph{$n$-pointed FM expanded degeneration} over $B$ is the data \[(W_{\Upsilon}^{\text{FM}}, s_1, s_2, \ldots, s_n),\] where $W_{\Upsilon}^{\text{FM}}$ is of the form $W_{\Upsilon}^{(N)}$ for some initial expanded degeneration $W_{\Upsilon}$ and non-negative integers $N$, and $s_1, s_2, \dots, s_n$ are distinct sections, where on each fibre of $W_{\Upsilon}^{\text{FM}} \to B$, the sections are supported on the smooth locus. 
	
	It is said to be \emph{stable} if
	\begin{itemize}
		\item $(W_{\Upsilon}^{\text{FM}}, s_1, s_2, \ldots, s_n)$ maps down to a stable $n$-pointed expanded degeneration \linebreak $(W_{\Upsilon}^{(0,0)},s_1', s_2', \ldots, s_n')$;
		\item over every point $b\in B \backslash \{b_0\}$, the fibre $(W_{\Upsilon}^{\text{FM}})_b$ together with the $n$ points $s_i(b)$ forms a stable $n$-pointed FM degeneration of $X$, in the sense of \cite[p. 194]{FM};
		\item over $b_0$, the fibre $(W_{\Upsilon}^{\text{FM}})_0$ together with the $n$ points $s_i(b_0)$ form a stable $n$-pointed FM expansion.

	\end{itemize}
\end{definition}

\begin{definition}
	Given a stable $n$-pointed FM expansion $(Y_{\mathcal{S}}^{\text{FM}}, p_1, p_2, \ldots, p_n)$, let the vertex set of $\mathcal{S}$ be $V(\mathcal{S}) = \{v_1, v_2, \ldots, v_k\}$.

	The \emph{combinatorial type} of $(Y_{\mathcal{S}}^{\text{FM}}, p_1, p_2, \ldots, p_n)$ is a tuple \[(\F_{\cal{S}} \to \F_{\Delta}, m, \T_{v_1}, \ldots, \T_{v_k}),\] where $(\F_{\cal{S}} \to \F_{\Delta}, m)$ is the combinatorial type of $Y_{\cal{S}}$, and each $\T_{v_i}$ is a rooted tree defined as follows:
	\begin{itemize}
		\item \textit{Vertices:} The tree $\T_{v_i}$ has a vertex for each component of $Y_{\cal{S}}^{\text{FM}}$ which contracts to the component of $Y_{\cal{S}}$ corresponding to $v_i$. The \emph{root vertex} of $\T_{v_i}$ is $v_i$.
		\item \textit{Edges:} Two vertices share an edge if the corresponding components intersect.
		\item \textit{Legs:} For each vertex $v$ of $\T_{v_i}$, attach a leg to $v$ for each marked point $p_j$ contained in the component of $Y_{\cal{S}}^{\text{FM}}$ corresponding to $v$.
	\end{itemize}
\end{definition}
\begin{remark}
	Given a stable $n$-pointed FM degeneration of $X$, its combinatorial type, as defined in \cite[p. 197]{FM}, is simply a collection of rooted trees with $n$ legs, where each non-root vertex is at least 3-valent. A tropical analogue of a stable $n$-pointed FM degeneration is a collection of rooted metric trees with $n$ legs satisfying the same valency condition. We call such a collection a \emph{forest}.
\end{remark}
\begin{figure}[!htb]
	\centering
	\begin{minipage}{.4\textwidth}
		\centering
		\def\svgwidth{.8\textwidth}
\begingroup%
  \makeatletter%
  \providecommand\color[2][]{%
    \errmessage{(Inkscape) Color is used for the text in Inkscape, but the package 'color.sty' is not loaded}%
    \renewcommand\color[2][]{}%
  }%
  \providecommand\transparent[1]{%
    \errmessage{(Inkscape) Transparency is used (non-zero) for the text in Inkscape, but the package 'transparent.sty' is not loaded}%
    \renewcommand\transparent[1]{}%
  }%
  \providecommand\rotatebox[2]{#2}%
  \newcommand*\fsize{\dimexpr\f@size pt\relax}%
  \newcommand*\lineheight[1]{\fontsize{\fsize}{#1\fsize}\selectfont}%
  \ifx\svgwidth\undefined%
    \setlength{\unitlength}{267.71413007bp}%
    \ifx\svgscale\undefined%
      \relax%
    \else%
      \setlength{\unitlength}{\unitlength * \real{\svgscale}}%
    \fi%
  \else%
    \setlength{\unitlength}{\svgwidth}%
  \fi%
  \global\let\svgwidth\undefined%
  \global\let\svgscale\undefined%
  \makeatother%
  \begin{picture}(1,0.96455504)%
    \lineheight{1}%
    \setlength\tabcolsep{0pt}%
    \put(0,0){\includegraphics[width=\unitlength,page=1]{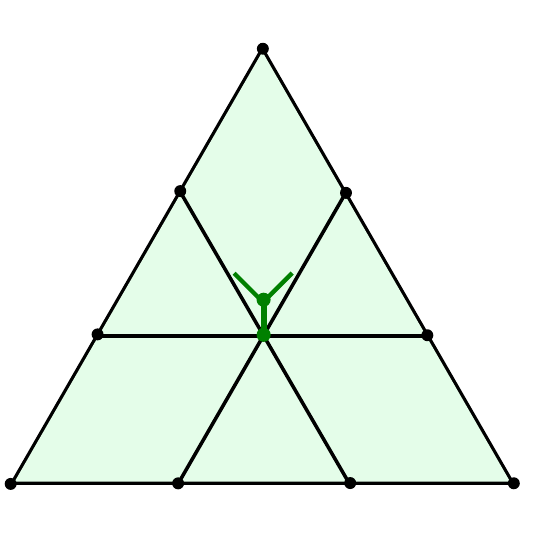}}%
    \put(0.45281381,0.93243114){\color[rgb]{0,0,0}\makebox(0,0)[lt]{\lineheight{0}\smash{\begin{tabular}[t]{l}$Y_1$\end{tabular}}}}%
    \put(-0.00049828,0.00891113){\color[rgb]{0,0,0}\makebox(0,0)[lt]{\lineheight{0}\smash{\begin{tabular}[t]{l}$Y_2$\end{tabular}}}}%
    \put(0.89853913,0.01641066){\color[rgb]{0,0,0}\makebox(0,0)[lt]{\lineheight{0}\smash{\begin{tabular}[t]{l}$Y_3$\end{tabular}}}}%
    \put(0.32984516,0.4611396){\color[rgb]{0,0,0}\makebox(0,0)[lt]{\lineheight{0}\smash{\begin{tabular}[t]{l}$u_1$\end{tabular}}}}%
    \put(0.54845615,0.45711615){\color[rgb]{0,0,0}\makebox(0,0)[lt]{\lineheight{0}\smash{\begin{tabular}[t]{l}$u_2$\end{tabular}}}}%
  \end{picture}%
\endgroup%

		\caption{A degenerate planted forest.}
		\label{fig:degen_planted}
	\end{minipage}\hfill
	\begin{minipage}{0.6\textwidth}
		\centering
		\def\svgwidth{\textwidth}
		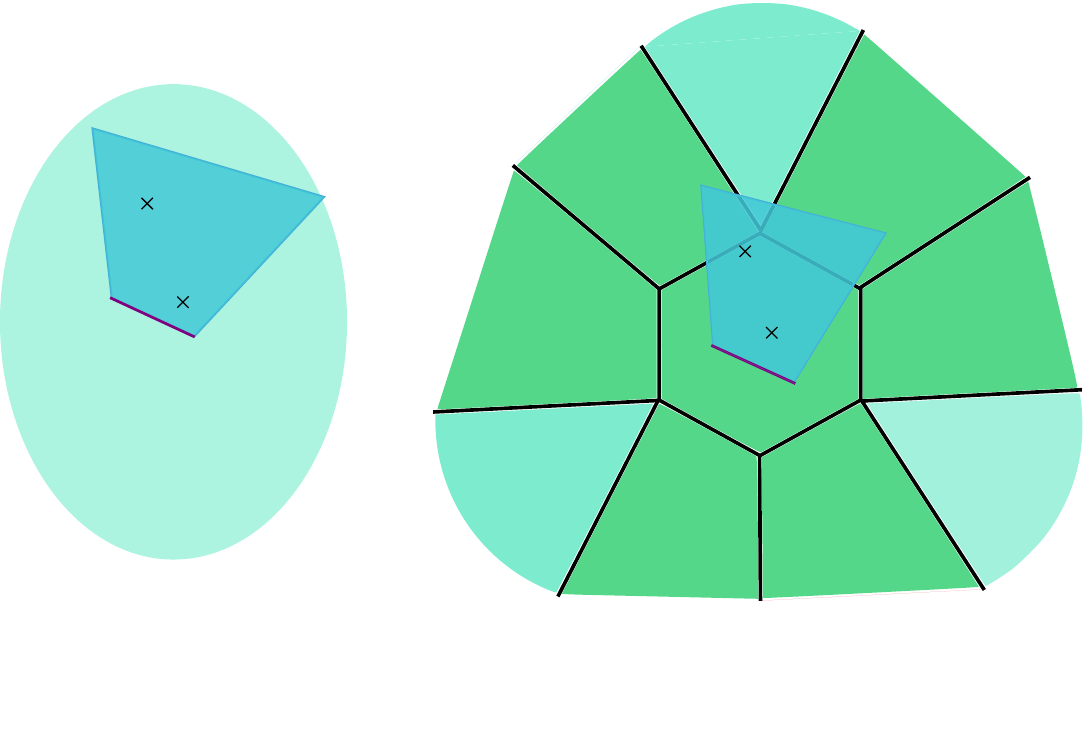
		\caption{A stable $2$-pointed FM expanded degeneration.}
		\label{fig:FM_exp_degen}
	\end{minipage}
\end{figure}

We define a tropical analogue of a stable $n$-pointed FM expansion of the form $Y_{\cal{S}}$, and its combinatorial type.
\begin{definition}
	A \emph{degenerate planted forest} is a tuple $(\cal{S}, m, \T^{\mathrm{m}}_{v_1}, \ldots, \T^{\mathrm{m}}_{v_k})$. The pair $(\cal{S}, m)$ is an $n$-marked simplex-lattice subdivision, and $v_1, v_2, \ldots, v_k$ are not necessarily distinct vertices of $\cal{S}$. Each $\T^{\mathrm{m}}_{v_i}$ is a rooted metric tree with legs and root $v_i$, such that the legs on $\T^{\mathrm{m}}_{v_i}$ are in bijection with markings in $m(v_i)$, and every vertex apart from the root is at least 3-valent.
\end{definition}
\begin{definition}
	The \emph{combinatorial type} of a degenerate planted forest $(\cal{S}, m, \T^{\mathrm{m}}_{v_1}, \ldots, \T^{\mathrm{m}}_{v_k})$ is $(\F \to \F_{\Delta}, m, \T_{v_1}, \ldots, \T_{v_k})$, where each $\T_{v_i}$ is obtained by forgetting the metric on $\T^{\mathrm{m}}_{v_i}$.
\end{definition}

\subsection{Construction of the degeneration}

Given a subset $I\subset \oneton$, we consider the $I$th-\emph{diagonal over $\omega$}, namely $\Delta_{\omega}(I)\colonequals  \delta(W)\times_B W^{\oneton \backslash I}_{\omega} \subset W^{(n)}_{\omega}$, where $\delta\colon  W \to W^I_{\omega}$ is the diagonal morphism to the fibre product $W^I_{\omega}$ over $\omega\colon W \to B$ indexed by $I$. 

We consider the strict transforms $\delta_{\omega}(I)$ 
under the logarithmic modification $\pointsdegen \to W^{(n)}_{\omega}$. We define the scheme $\FMdegen$ as the iterated blow-up of $\pointsdegen$ in the sequence of (dominant transforms) of centres (cf. Section~\ref{subsection:iteratedbl}):
\begin{equation} \label{seq:FMdegen}
	\Domega({\{1,2\}}); \Domega({\{1,2,3\}}), \Domega({\{1,3\}}), \Domega({\{2,3\}}); \ldots; \Domega({\{1,2,\ldots,n\}}), \ldots, \Domega({\{n-1,n\}}). \tag{$\dagger$}
\end{equation}


\begin{observation} \label{obs:special-fibre}
	The special fibre $(\FMdegen)_0$ is the preimage of $(\pointsdegen)_0$ under the iterated blow-up $\FMdegen \to \pointsdegen$. Since $(\pointsdegen)_0$ is transverse to blow-up centres $\delta_{\omega}(J)$, by Lemma~\ref{lem:strict_transverse}, it follows that $(\FMdegen)_0$ is also the strict transform. So, $(\FMdegen)_0$ is an iterated blow-up of $(\pointsdegen)_0$ along the special fibres of diagonals $(\delta_{\omega}(J))_0$.
\end{observation}

Consider the fibre product $\FMdegen \times_{\pointsdegen} \pointsdegenfam$. For any $j \in J$, the section $\mathbf{s}_j: \pointsdegen \to \pointsdegenfam$ gives rise to a closed immersion of the section $D_{\omega}(J)$ into $\FMdegen \times_{\pointsdegen} \pointsdegenfam$.

The scheme $\FMdegenfam$ is defined as the iterated blow-up of $\FMdegen \times_{\pointsdegen} \pointsdegenfam$ in the sequence of strict transforms of sections
\begin{equation}\label{seq:FMdegen+}
	D_{\omega}({\{1,2,\ldots, n\}}); D_{\omega}({\{1,2,\ldots, n-1\}}), \ldots; D_{\omega}({\{1,2\}}), \ldots, D_{\omega}({\{n-1,n\}}). \tag{$\dagger \dagger$}
\end{equation}

There is a morphism $\piFM^W\colon \FMdegenfam \to \FMdegen$ with $n$ sections, induced by the family $\pointsdegenfam \to \pointsdegen$ with its $n$ sections.

\begin{observation} \label{obs:FMdegenfibres}
	Denote by $\pointsdegenfam^{\text{FM}}_J$ the iterated blow-up of $\FMdegen \times_{\pointsdegen} \pointsdegenfam$ along the centres in (\ref{seq:FMdegen+}) until $D_{\omega}(J)$. Denote by $\pointsdegenfam^{\text{FM}}_I$ the iterated blow-up at the preceding step, and denote by $\pointsdegenfam^{\text{FM}}_I |_{D_{\omega}(J)}$ the preimage in $\pointsdegenfam^{\text{FM}}_I$ over the divisor $D_{\omega}(J) \subset \FMdegen$. Let $s$ denote the image of the induced section $D_{\omega}(J) \to \pointsdegenfam^{\text{FM}}_I |_{D_{\omega}(J)}$. One can check by local computations that $s$ lands in the relative smooth locus of $\pointsdegenfam^{\text{FM}}_I \to \FMdegen$.
	
	Then the preimage of {$\pointsdegenfam^{\text{FM}}_J \to \FMdegen$} over the divisor $D_{\omega}(J) \subset \FMdegen$ is 
	\[\bl{s}{\pointsdegenfam^{\text{FM}}_I |_{D_{\omega}(J)}} \coprod_{\PP(T_s \pointsdegenfam^{\text{FM}}_I |_{D_{\omega}(J)})} \PP(T_s \pointsdegenfam^{\text{FM}}_I |_{D_{\omega}(J)} \oplus \mathbbm{1}).\]
	
	The divisors $D_{\omega}(J)$ of $\FMdegen$, along with the strict transforms of the existing strata on $\pointsdegen$, induce a stratification on $\FMdegen$, and we can extend the above observation to describe the preimage of $Z$ in $\FMdegenfam$. In particular, we see that the preimage in $\FMdegenfam$ over each point in the stratum, together with its $n$ points, is a stable $n$-pointed FM expansion of the expected combinatorial type.
\end{observation}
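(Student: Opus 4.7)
The plan is to mirror the argument underlying Observation~\ref{obs:logFMfibres} in the degeneration setting, since the local geometry of the blow-ups in the sequence (\ref{seq:FMdegen+}) is identical to that of the sequence (\ref{seq:FM+}), only now relativised over the base $B$ and carried out over an expanded degeneration rather than over $\logProd$. I shall treat the three assertions of the observation in turn.

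For the local description of the preimage over $D_{\omega}(J)$, I would proceed by induction along the blow-up sequence (\ref{seq:FMdegen+}). At the inductive step, the section $\mathbf{s}_j$ for $j \in J$ restricts to a section $s\colon D_{\omega}(J) \to \pointsdegenfam^{\text{FM}}_I|_{D_{\omega}(J)}$ supported on the smooth locus of the family. This image is a smooth closed subscheme, thanks to the disjointness of the sections $\mathbf{s}_j$ over the relevant strata and the fact that the universal property of blow-ups lifts the sections at each step, as in the analogue of Lemma~\ref{lem:sections} for the degeneration setting. The blow-up $\bl{s}{\pointsdegenfam^{\text{FM}}_I|_{D_{\omega}(J)}}$ then has exceptional divisor $\PP(T_s \pointsdegenfam^{\text{FM}}_I|_{D_{\omega}(J)})$, and the classical Fulton--MacPherson coproduct construction \cite[Section 2]{FM} identifies the preimage of $D_{\omega}(J)$ in $\pointsdegenfam^{\text{FM}}_J$ with the stated gluing along the exceptional divisor.

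For the stratification, I would combine the existing stratification of $\pointsdegen$ by combinatorial types of $n$-marked simplex-lattice subdivisions (Section~\ref{section:boundary_degen}) with the divisors $D_{\omega}(J)$. The centres $\delta_{\omega}(J)$ meet the existing strata transversely: the existing strata are pulled back from the Artin fan $\simpexp_B$, whose local structure is \'etale-locally toric, while the diagonals $\delta_{\omega}(J)$ are supported on the smooth locus of $\pointsdegenfam \to \pointsdegen$ and meet this \'etale-local structure transversely. Transversality is preserved under strict transforms, so the divisors $D_{\omega}(J)$ together with the strict transforms of existing strata form a simple normal crossings stratification. A stratum is then indexed by the combinatorial type of its image in $\pointsdegen$ together with, for each vertex $v_i$ of the underlying simplex-lattice subdivision, a rooted tree recording the nesting of divisors $D_{\omega}(J)$ containing it -- precisely the data of the combinatorial type of a degenerate planted forest.

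The third assertion, that each fibre of $\piFM^W$ over a point of a stratum is a stable $n$-pointed FM expansion of the expected combinatorial type, then follows by iterating the first assertion along the tree structure identified above: each divisor $D_{\omega}(J)$ containing a stratum produces one additional bubble $\PP(T_\bullet \oplus \mathbbm{1})$ attached at the section indexed by some $j \in J$, and the nesting of the subsets $J$ produces precisely the tree structure on the rooted trees $\T_{v_i}$ of the corresponding degenerate planted forest. The main obstacle I expect is verifying that the strict transforms of the sections $\mathbf{s}_j$ remain smooth and meet the exceptional divisors transversely throughout the sequence (\ref{seq:FMdegen+}); this is the relative, expanded analogue of the Fulton--MacPherson ``building set'' condition \cite[Section 2]{FM}, and it should reduce locally to the same toric/combinatorial argument used in the proof of Observation~\ref{obs:logFMfibres}.
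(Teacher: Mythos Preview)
Your proposal is correct and aligns with the paper's implicit reasoning. Note that the paper states this as an \emph{Observation} with no accompanying proof: it treats the claim as a direct analogue of Observation~\ref{obs:logFMfibres} (also unproved), relying on the reader to supply the standard Fulton--MacPherson blow-up analysis from \cite[Section~2]{FM} in the relative setting. Your write-up is exactly the fleshing out of that implicit argument --- induction along the sequence (\ref{seq:FMdegen+}), the coproduct description of the preimage over $D_{\omega}(J)$, and the transversality of the $D_{\omega}(J)$ with the pulled-back strata --- so there is nothing to compare against beyond noting that you have made explicit what the paper leaves as an observation.
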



\begin{lemma}\label{lem:FMdegenfibres}
	Given any morphism of $B$-schemes $f \colon B \to \FMdegen$, the pullback of \[\piFM^W \colon \FMdegenfam \to \FMdegen\] along $f$, together with $n$ sections $f^*\mathbf{s}_1, f^*\mathbf{s}_2, \ldots, f^*\mathbf{s}_n$, is a stable $n$-pointed FM expanded degeneration. 
\end{lemma}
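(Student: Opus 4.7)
The strategy parallels Lemma~\ref{lem:logFMfibres}, and reduces to combining Observation~\ref{obs:FMdegenfibres} with the analysis of $\pointsdegen$ over the two loci $B \backslash \{b_0\}$ and $\{b_0\}$ separately. I would first observe that away from $b_0$ the morphism $\pointsdegen \to B$ restricts to $X^n \times (B \backslash \{b_0\}) \to B \backslash \{b_0\}$ (since no expansion occurs over the smooth locus of the base), and the sequence of centres (\ref{seq:FMdegen}) restricts to the Fulton--MacPherson blow-up sequence of $X^n$, so $\FMdegen$ restricts to $\FM \times (B \backslash \{b_0\})$. Restricting the family $f^*\piFM^W$ to this open shows that its fibres together with the sections $f^*\mathbf{s}_i$ form stable $n$-pointed FM degenerations of $X$ in the sense of \cite[p.~194]{FM}.

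Over $b_0$, the composition $b_0 \hookrightarrow B \xrightarrow{f} \FMdegen$ lands in a unique locally closed stratum, which is cut out by the strict transforms of an expanded-degeneration stratum $Z_\tau \subset \pointsdegen$ together with some collection of exceptional divisors $D_\omega(J)$. By Observation~\ref{obs:FMdegenfibres}, the preimage of this stratum in $\FMdegenfam$ is built iteratively from the corresponding expansion $Y_{\cal{S}} \to Z_\tau$ by the prescribed sequence of blow-ups at the sections $\mathbf{s}_j$ and glue-ins of $\PP(T \oplus \mathbbm{1})$-bubbles, which is precisely the iterative construction in Definition~\ref{def:FMgridexp} relativised over the stratum. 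Pulling back along $b_0$, the fibre $(f^*\FMdegenfam)_{b_0}$ with its $n$ sections is thus an $n$-pointed FM expansion whose combinatorial type is the degenerate planted forest recording the stratum.

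To conclude stability in the sense of Definition~\ref{def:FMdegen}, I would verify the three bullet points: (i) the map to $(W_\Upsilon^{(0,0)}, s_i')$ is the composition of $f$ with the blow-down $\FMdegen \to \pointsdegen$, which by construction factors through a stable $n$-pointed expanded degeneration; (ii) the generic-fibre statement is handled above; and (iii) the stability of the $n$-pointed FM expansion over $b_0$ follows because every non-root vertex of each tree $\T_{v_i}$ arises from a blow-up along $\delta_\omega(J)$ with $|J| \geq 2$, guaranteeing that the corresponding $\PP^d$-bubble contains at least two marked points, hence at least three markings counting the intersection with the parent component --- this is the standard analysis of \cite[\S2]{FM} transplanted into our setting via Observation~\ref{obs:FMdegenfibres}. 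Finally, the sections $f^*\mathbf{s}_i$ avoid the singular locus because the blow-up centres $\delta_\omega(I)$ are contained in the smooth locus of the fibres of $\pointsdegenfam \to \pointsdegen$, so their strict transforms remain in the smooth locus of each FM expansion.

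The main technical obstacle will be bullet (iii): namely, checking fibrewise that the resulting rooted trees are at least $3$-valent at every non-root vertex. This amounts to a careful bookkeeping of which dominant transforms $\delta_\omega(I)$ remain non-empty after the preceding blow-ups in the sequence (\ref{seq:FMdegen}) --- precisely the combinatorics of the original Fulton--MacPherson construction --- and the key point is that this combinatorics is unaffected by the presence of expansions, because the diagonal loci $\delta_\omega(I)$ are defined relative to $\omega$ and intersect each expansion transversely in its smooth locus.
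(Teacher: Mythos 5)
Your overall strategy is the same as the paper's: locate the stratum of $\FMdegen$ that the image of $f$ meets and feed Observation~\ref{obs:FMdegenfibres} into a case analysis over $B\backslash\{b_0\}$ and $b_0$. The fibrewise conclusions you draw (generic fibres are stable $n$-pointed FM degenerations of $X$; the fibre over $b_0$ is a stable $n$-pointed FM expansion of the expected combinatorial type) are correct and match what the paper needs for the second and third bullets of Definition~\ref{def:FMdegen}.

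There is, however, a gap. Definition~\ref{def:FMdegen} does not only constrain the fibres: it requires the total space $f^*\FMdegenfam \to B$ to be of the form $W_{\Upsilon}^{(N)}$, i.e.\ to be obtained from an expanded degeneration by the iterative operations of Construction~\ref{const:FMexpdegen} (blow-up along a section plus gluing of a $\PP(T\oplus\mathbbm{1})$-bundle, or blow-up at a point of the special fibre). Your argument describes the preimage of a stratum in $\FMdegenfam$ and then passes to fibres, but it never verifies that pulling back the iterated blow-up of (\ref{seq:FMdegen+}) along the curve $f(B)$ again yields a chain of operations of this form; blow-ups do not commute with arbitrary base change, so this is not automatic. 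The paper's proof supplies exactly this step: when $f(b_0)$ lies in a deeper stratum, the image of $f$ is transverse to each divisor $D_{\omega}(J)$ not containing it, so at each such stage the pullback of $\pointsdegenfam^{\text{FM}}_J$ along $f$ coincides with the strict transform, which exhibits $f^*\FMdegenfam$ as a blow-up of the previous stage at a point of the special fibre (the second option in Construction~\ref{const:FMexpdegen}); the stages with $f(B)\subset D_{\omega}(J)$ give the first option. Without this you have the right fibres but not the required global structure over $B$. A smaller point: your claim that a bubble arising from $\delta_{\omega}(J)$ with $|J|\geq 2$ ``contains at least two marked points'' is not literally correct, since those points may be separated further into deeper bubbles; the correct count is the standard one of \cite[Section 2]{FM} in terms of marked points together with intersections with child components, as the paper's appeal to Observation~\ref{obs:FMdegenfibres} and Lemma~\ref{lem:logFMfibres} already encodes.
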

\begin{proof}
	Let $Z$ be the stratum in $\FMdegen$ containing the image of the generic point of $B$. The image of $f$ either lies entirely in $Z$, or the image of $b_0$ lies in a deeper stratum, whose closure is the intersection of $Z$ with other divisors $\{D_{\omega}(J)\}$. We apply Observation~\ref{obs:FMdegenfibres} to each of these cases. In the latter case, we also note that the image of $f$ is transverse to each of the other divisors $D_{\omega}(J)$, therefore at the blow-up along each of these divisors $D_{\omega}(J)$, the pullback of the family $\pointsdegenfam^{\text{FM}}_J$ along $f:B\to \FMdegen$ coincides with the strict transform. We thus conclude that the pullback of $\piFM^W$ along $f$, with its $n$ sections, is a stable $n$-pointed FM expanded degeneration.
\end{proof}

We now state the main theorem, which will be proved at the end of this subsection.
\begin{theorem}\label{thm:FMdegen}
	Given a simple normal crossings degeneration $W\to B$ of $X$, the morphism $\FMdegen \to B$ is a proper, flat, logarithmically smooth degeneration of $\FM$ with reduced fibres. The morphism $\piFM^W\colon \FMdegenfam \to \FMdegen$ is a logarithmically smooth and flat family of stable Fulton--MacPherson expanded degenerations with $n$ sections $\mathbf{s}_1, \mathbf{s}_2, \ldots, \mathbf{s}_n$, where on each fibre, the sections are supported on the smooth locus.
\end{theorem}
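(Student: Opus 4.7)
The strategy is to reduce each claim to the corresponding property of $\pointsdegen \to B$ and $\pointsdegenfam \to \pointsdegen$ already established in the previous section, and then to verify that these properties are preserved under the iterated blow-ups (\ref{seq:FMdegen}) and (\ref{seq:FMdegen+}). Properness of $\FMdegen \to B$ is immediate since iterated blow-ups are proper and $\pointsdegen \to B$ is proper, and the generic fibre is identified with $\FM$ because over $b \neq b_0$ one has $\pointsdegen|_b = X^n$ and the blow-up sequence specialises to the original Fulton--MacPherson sequence.

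The main work is in controlling the centres in (\ref{seq:FMdegen}). I would argue inductively that at each stage of the iterated blow-up, the next centre is a smooth subvariety of the current space, flat over $B$ with reduced special fibre, and meeting transversely both the remaining centres and the existing boundary. For the initial diagonals, the isomorphism $\Delta_\omega(I) \cong \delta(W) \times_B W^{\oneton \setminus I}_\omega$ gives smoothness and flatness over $B$, with reduced fibres inherited from the combinatorial reducedness established in Proposition~\ref{prop: semistable_degen}. The passage to the strict transforms $\Domega(I)$ under the logarithmic modification $\pointsdegen \to W^{(n)}_\omega$ preserves smoothness, since $\Delta_\omega(I)$ meets the logarithmic stratification of $W^{(n)}_\omega$ transversely; this transversality is checked locally in the toroidal charts provided by the subdivision $\tropmoddegen$ of $\cone(\Delta^n)$. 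Transversality then propagates through subsequent blow-ups by the standard inductive argument of \cite[Section 2]{FM}, applied relative to $B$. Granting this, each blow-up is along a smooth centre that is flat over $B$ with reduced special fibre and transverse to the existing boundary divisors, and such blow-ups preserve flatness, reducedness of fibres, and logarithmic smoothness relative to $B$. This gives part (1).

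For part (2), the base change $\FMdegen \times_{\pointsdegen} \pointsdegenfam \to \FMdegen$ is flat because $\pointsdegenfam \to \pointsdegen$ is flat, and the same smoothness and transversality analysis applied to sequence (\ref{seq:FMdegen+}) shows that $\piFM^W\colon \FMdegenfam \to \FMdegen$ is flat. The $n$ sections $\mathbf{s}_i$ lift through the iterated blow-up exactly as in the proof of Lemma~\ref{lem:sections}: the preimage under $\mathbf{s}_i$ of each blown-up locus $D_\omega(J) \hookrightarrow \FMdegen \times_{\pointsdegen} \pointsdegenfam$ is the divisor $D_\omega(J) \subset \FMdegen$ when $i \in J$ and empty otherwise, so the universal property of blow-ups produces a unique lift at each stage. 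That each geometric fibre together with its sections is a stable $n$-pointed FM expanded degeneration, with sections supported on the smooth locus, is then the content of Lemma~\ref{lem:FMdegenfibres} and Observation~\ref{obs:FMdegenfibres}.

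The main obstacle is the bookkeeping of smoothness and transversality through the iterated blow-up in the relative setting. In the absolute case this is essentially the content of \cite[Sections 2--3]{FM}; here the additional subtlety is that one must track transversality to the (possibly singular) special fibre of $\pointsdegen \to B$. I expect this reduces cleanly to the combinatorial description of the diagonals in the toroidal charts cut out by the rigid combinatorial types of $\tropmoddegen$, after which the relative statements follow by flat base change along the strict morphism $\pointsdegen \to \simpexp_B$ and by repeating the arguments that proved Theorem~\ref{thm:logFM_fibres} on each stratum.
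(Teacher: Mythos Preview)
Your reductions for properness, the generic fibre identification, the lifting of sections, and the fibrewise description via Lemma~\ref{lem:FMdegenfibres} are all fine and agree with the paper. The substantive divergence is in how flatness is obtained, and here your proposal has a real gap.

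You want to run an inductive smoothness-and-transversality argument on the blow-up centres, in the spirit of \cite[Sections~2--3]{FM}. The problem is that the ambient space $\pointsdegen$ is in general \emph{singular}: as the paper remarks, already for $r=3$ and $n=2$ the cone complex $\tropmoddegen$ is not simplicial. Likewise the initial diagonals $\Delta_\omega(I) \cong W^{(|I^c|+1)}_\omega$ are fibre products of copies of $W$ over $B$ and need not be smooth when $W_0$ has more than two components. So the assertion that each centre $\Domega(I)$ is a smooth subvariety of a smooth ambient, which is what the Fulton--MacPherson argument needs, is not available. Your closing paragraph flags the singularity of the special fibre, but the difficulty is already in the total space.

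The paper sidesteps this entirely. For $\FMdegen \to B$ it simply invokes miracle flatness over the smooth curve $B$ (the blow-up is integral and dominant). For the harder flatness of $\piFM^W$ it does not analyse the blow-up centres at all; instead it introduces a \emph{refined} divisorial log structure on $\FMdegen$ and $\FMdegenfam$ that records the exceptional divisors $D_\omega(J)$ as well as the components of the special fibre, checks via an explicit local model that $\piFM^W$ is logarithmically smooth for this structure, and then applies Tsuji's criterion \cite[Theorem~II.4.2]{tsuji} to reduce flatness to combinatorial flatness of the induced tropical map $\tropfamdegenFM \to \tropmoddegenFM$. That combinatorial flatness is verified by a direct case analysis of the cones (the extra point $p_0$ lies on $\mathsf{S}$, on a tree vertex, or on a tree edge). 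This is the missing idea: the refined log structure converts the question into a finite combinatorial check that is insensitive to the failure of smoothness of $\pointsdegen$.
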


\begin{observation}
	The space $\FMdegen$ admits a stratification by combinatorial types. 
	We consider a degenerate planted forest with the choice of a distinguished point $p_0$ anywhere on $\cal{S} \cup \T^{\mathrm{m}}_{v_1} \cup \dots \cup \T^{\mathrm{m}}_{v_k}$. The combinatorial types of such an object stratify $\FMdegenfam$. See Figure~\ref{fig:comb_types_FMdegenfam} for examples of such combinatorial types.
\end{observation}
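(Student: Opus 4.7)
The plan is to derive both stratifications by transporting the existing stratification of $\pointsdegen$ through the iterated blow-up construction of $\FMdegen$, and then analysing the fibres of $\piFM^W$ to stratify $\FMdegenfam$. This mirrors the strategy for the absolute setting carried out in Theorem~\ref{thm:logFM_fibres}.

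First, I would start from the stratification of $\pointsdegen$ recalled in Section~\ref{section:boundary_degen}: its locally closed strata $Z_\tau$ are indexed by combinatorial types $\tau=(\F_{\cal{S}}\to\F_{\Delta},m)$ of $n$-marked simplex-lattice subdivisions, arising as preimages of the substacks $\B T_\tau\hookrightarrow\simpexp_B$. The iterated blow-up $\FMdegen\to\pointsdegen$ introduces transversely meeting exceptional divisors $\{D_\omega(J)\}$. Inductively, as in Observation~\ref{obs:FMdegenfibres}, the locally closed strata of $\FMdegen$ are the non-empty intersections of strict transforms of the $Z_\tau$ with the divisors $D_\omega(J)$, and are therefore indexed by pairs $(\tau,\cal{J})$, where $\cal{J}$ is a collection of subsets $J\subseteq\oneton$ recording which exceptional divisors contain the stratum.

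Next, I would translate the index $(\tau,\cal{J})$ into the data of a degenerate planted forest. By Lemma~\ref{lem:FMdegenfibres}, a fibre of $\piFM^W$ over a closed point of such a stratum is a stable $n$-pointed FM expansion, hence carries a combinatorial type $\nu=(\F\to\F_{\Delta},m,\T_{v_1},\ldots,\T_{v_k})$. The rooted trees $\T_{v_i}$ are read off combinatorially from the nesting structure of the subsets in $\cal{J}$, exactly as in the classical Fulton--MacPherson boundary description \cite[Section 2]{FM}: each maximal chain of subsets in $\cal{J}$ collapsing to a single vertex of $\cal{S}$ produces a rooted subtree, and chains collapsing to different vertices produce disjoint subtrees. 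The resulting assignment $(\tau,\cal{J})\mapsto\nu$ is a bijection onto combinatorial types of degenerate planted forests, yielding the stratification of $\FMdegen$.

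Finally, for $\FMdegenfam$, I would stratify fibrewise by the location of the distinguished point. Over a locally closed stratum of $\FMdegen$ of type $\nu$, the fibre of $\piFM^W$ is an FM expansion $W_{\Upsilon}^{\text{FM}}$ whose components are indexed by the vertices of $\cal{S}$ and of the trees $\T_{v_i}$. The stratification of $W_{\Upsilon}^{\text{FM}}$ by open strata of its components and their transverse intersections corresponds precisely to choosing a distinguished position $p_0$ anywhere on $\cal{S}\cup\T^{\mathrm{m}}_{v_1}\cup\cdots\cup\T^{\mathrm{m}}_{v_k}$. Intersecting the preimages under $\piFM^W$ of the strata of $\FMdegen$ with these fibrewise strata yields the stratification of $\FMdegenfam$ indexed by pointed degenerate planted forest types. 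The main obstacle will be verifying the bijection between $(\tau,\cal{J})$ and degenerate planted forest combinatorial types with the correct root structure, and checking that the fibrewise ``location of $p_0$'' decomposition assembles into a genuine global stratification of $\FMdegenfam$, which follows because the blow-up sequence building $\FMdegenfam$ from $\pointsdegenfam$ is compatible along transverse centres with the one building $\FMdegen$ from $\pointsdegen$.
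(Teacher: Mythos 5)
Your proposal is correct and matches the paper's own (largely implicit) justification: the paper derives the stratification of $\FMdegen$ from Observation~\ref{obs:FMdegenfibres} (strict transforms of the strata $Z_\tau$ of $\pointsdegen$ intersected with the transverse divisors $D_\omega(J)$, with the nest combinatorics of the $J$'s encoding the trees exactly as in \cite[Section 2]{FM}), and your fibrewise decomposition of $\FMdegenfam$ by the location of $p_0$ is precisely how the paper treats it, as confirmed by the three cases for $p_0$ enumerated in the proof of Theorem~\ref{thm:FMdegen} when listing the cones of $\tropfamdegenFM$. No essential difference in route or substance.
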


\begin{figure}[h]
	\centering
	\includegraphics[width=.75\textwidth]{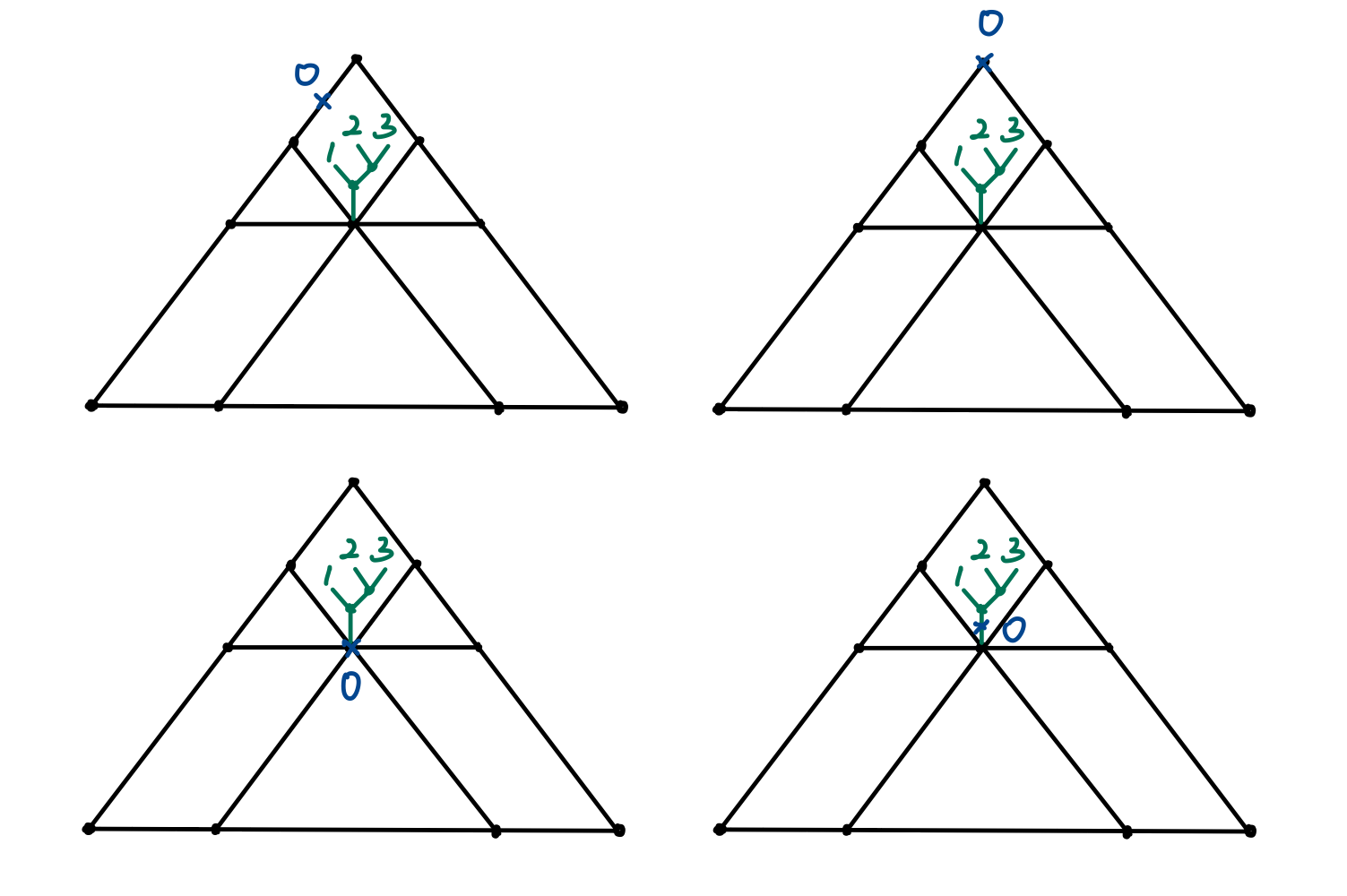}
	\caption{Combinatorial types in $\FMdegenfam$}
	\label{fig:comb_types_FMdegenfam}
\end{figure}

\subsubsection{A refined logarithmic structure}
Rather than pulling back the logarithmic structure from $\pointsdegen$ to $\FMdegen$, we define a refined logarithmic structure on $\FMdegen$, where the induced stratification coincides with the stratification by combinatorial types. In particular, the logarithmic interior is precisely $\mathrm{Conf}_n(X\backslash D) \times B\backslash \{b_0\}$.

There are two types of boundary divisor components of $\FMdegen$:
\begin{itemize}
	\item \textit{Horizontal divisors}: Divisors $D_{\omega}(J)$ of $\FMdegen$. They are the closures of strata with combinatorial type a single tree of exactly one (bounded) edge, on a trivial subdivision. They biject with the divisors of $\FM$.
	\item \textit{Irreducible components of the special fibre} $(\FMdegen)_0$: These are the strict transforms of the irreducible components of $(\pointsdegen)_0$, i.e. the divisors $\FMdegen(\rho)$ are the strict transforms of $\pointsdegen(\rho)$ under the iterated blow up $\FMdegen \to \pointsdegen$. They are the closures of strata with rigid combinatorial type with each attached tree at vertex $v_i$ having only distinct legs and no edges.
\end{itemize}

We observe that the divisors $\FMdegen(\rho)$ are all the irreducible components of the special fibre. This is because the total transform of $\pointsdegen(\rho)$ coincides with its strict transform, as $\pointsdegen(\rho)$ is transverse to the centres of the iterated blow-up $\FMdegen \to \pointsdegen$.


We similarly consider the two types of boundary divisor components of $\FMdegenfam$:
\begin{itemize}
	\item \textit{Preimage of horizontal divisors $D_{\omega}(J)$ of $\FMdegen$}: They biject with the components of the boundary divisor $\FM^+ \backslash (X \times \Conf)$.
	\item \textit{Irreducible components of the special fibre} $(\FMdegenfam)_0$: These are the strict transforms of the irreducible components of $\pointsdegenfam_0$.
\end{itemize}



These boundary divisors of $\FMdegen$ and $\FMdegenfam$ induce a \emph{divisorial logarithmic structure} on $\FMdegen$ and $\FMdegenfam$ respectively, and the logarithmic stratifications coincides with the existing stratifications by combinatorial types. The codimensions of the boundary strata are equal to the dimensions of their corresponding cones, and satisfy a formula similar to Proposition~\ref{prop:codimFM}.

In fact $\pi_{\mathrm{FM}}^W$ is a morphism of logarithmic schemes. We denote the tropicalisations of $\FMdegen$ and $\FMdegenfam$ by $\tropmoddegenFM$ and $\tropfamdegenFM$ respectively.

\begin{observation}
	The tropicalisation $\tropmoddegenFM$ admits a modular description. The morphism $\FMdegen \to B$ induces a morphism $\tropmoddegenFM \to \Rgeq$. The preimage of $0\in \Rgeq$ is the moduli space of forests, and the preimage of $1 \in \Rgeq$ is the moduli space of degenerate planted forests. Each combinatorial type $\tau_{\mathrm{FM}} = (\F \to \F_{\Delta}, m, \T_{v_1}, \ldots, \T_{v_k})$ where $\tau = (\F \to \F_{\Delta}, m)$ is the combinatorial type of the underlying subdivision, corresponds to a cone in $\tropmoddegenFM$ of the form $\cone(\tau) \times \prod_{i = 1}^{k} \Rgeq^{|E(\T_{v_j})|}$ (cf. Proposition~\ref{prop:codimFM}), where $|E(\T_{v_j})|$ denotes the number of edges in $\T_{v_j}$. 
\end{observation}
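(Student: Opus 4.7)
The plan is to read off the cone complex $\tropmoddegenFM$ directly from the divisorial logarithmic structure on $\FMdegen$ introduced just before the statement. Once a logarithmic scheme is given a divisorial structure whose stratification coincides with its boundary stratification, its tropicalisation is the cone complex whose cones index the logarithmic strata with dimensions equal to codimensions. By the preceding observation on stratification, strata of $\FMdegen$ biject with combinatorial types $\tau_{\mathrm{FM}} = (\F \to \F_{\Delta}, m, \T_{v_1}, \ldots, \T_{v_k})$, so the cones of $\tropmoddegenFM$ are indexed by such types; the task is to identify each cone with the stated product and to compute the map to $\Rgeq$.

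To pin down the cone $\sigma_{\tau_{\mathrm{FM}}}$, I would enumerate the irreducible boundary divisors of $\FMdegen$ whose closures all contain the stratum and interpret each as a ray. The enumeration splits into the two classes of boundary divisors listed before the statement. The divisors coming from the iterated blow-up, namely the $D_\omega(J)$ in sequence (\ref{seq:FMdegen}), are responsible for the FM-tree part: by Observation~\ref{obs:FMdegenfibres} each such divisor containing the stratum corresponds to a non-root vertex in some $\T_{v_j}$ produced by blowing up $\delta_\omega(J)$. Since non-root vertices of a rooted tree biject with edges, these rays assemble to $\prod_{j}\Rgeq^{|E(\T_{v_j})|}$. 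The remaining divisors are strict transforms of divisors of $\pointsdegen$ (horizontal divisors of the simplex-lattice subdivision together with the components $\pointsdegen(\rho)$ of the special fibre); these are precisely the divisors encoded by $\tropmoddegen$, so by Proposition~\ref{prop: semistable_degen} the corresponding rays assemble into $\cone(\tau)$. The product decomposition follows since the two classes of divisors meet transversely, in line with Observation~\ref{obs:FMdegenfibres}. A dimension count via Proposition~\ref{prop:codimFM}, together with $|V(\T_{v_j})|-1=|E(\T_{v_j})|$, confirms that no ray is missed. Face relations between cones come from specialisations of combinatorial types (contracting a tree edge, merging vertices of $\cal{S}$), matching the gluing of cones in $\tropmoddegenFM$.

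For the map $\tropmoddegenFM \to \Rgeq$, I would use that $\FMdegen \to B$ factors as $\FMdegen \to \pointsdegen \to B$ and that the iterated blow-up $\FMdegen \to \pointsdegen$ is an isomorphism over $B \setminus \{b_0\}$. Tropicalising, the induced map factors through $\tropmoddegen \to \Rgeq$, which is the height map $h$ of Proposition~\ref{prop: semistable_degen}; the tree-edge rays, coming from exceptional divisors supported on the special fibre of $\pointsdegen$, map to $0$, while the $\cone(\tau)$-coordinates map by $h$. The preimage of $0$ thus consists of cones with $\cone(\tau)$ collapsed to the origin, i.e. trivial subdivision $\F = \F_\Delta$; the remaining data is a tuple of rooted trees with legs, which is exactly a forest. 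The preimage of $1$ is the height-$1$ slice, which on the $\cone(\tau)$ factor cuts out an $n$-marked simplex-lattice subdivision $\cal{S}$ and on the tree factors records a metric length for each edge, assembling into a degenerate planted forest as defined.

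The main obstacle will be the transversality bookkeeping needed to justify the clean product decomposition $\sigma_{\tau_{\mathrm{FM}}} \cong \cone(\tau) \times \prod_j \Rgeq^{|E(\T_{v_j})|}$: one must verify that the blow-ups in sequence (\ref{seq:FMdegen}) only create the exceptional divisors accounted for by the tree factors, and that these are transverse to the strict transforms of existing divisors of $\pointsdegen$. This is handled by induction on the blow-up sequence, using that each centre $\delta_\omega(J)$ lies on a unique stratum of the space being blown up and meets earlier exceptional divisors transversely, so the resulting local toric model at a stratum of type $\tau_{\mathrm{FM}}$ is the product of the toric model of $\cone(\tau)$ with one affine line per blow-up introducing an edge. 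Once this transversality is established, the cone identification, the face relations, and the description of the map to $\Rgeq$ all follow formally.
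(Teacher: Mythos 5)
Your route is the paper's route: the Observation carries no separate proof there, being read off directly from the refined divisorial logarithmic structure and the enumeration of the two classes of boundary divisors set up immediately beforehand, exactly as you do --- rays from the $D_{\omega}(J)$ matched to non-root vertices (equivalently edges) of the trees via Observation~\ref{obs:FMdegenfibres}, the $\cone(\tau)$ factor from the boundary of $\pointsdegen$, the product from transversality of the blow-up centres, and the height map analysed through the factorisation $\FMdegen \to \pointsdegen \to B$. Your conclusions about the fibres over $0$ and $1$ also agree with the statement.

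One step, however, is justified backwards. You assert that the tree-edge rays map to $0$ because they come ``from exceptional divisors supported on the special fibre of $\pointsdegen$''. They are not so supported: the centres $\delta_{\omega}(J)$ are strict transforms of the diagonals $\Delta_{\omega}(J)$, which dominate $B$, so the $D_{\omega}(J)$ are \emph{horizontal} divisors --- the paper lists them as such, bijecting with the divisors of $\FM$. Your stated reason would in fact yield the opposite conclusion: a ray corresponding to an irreducible component of $(\FMdegen)_0$ maps to a strictly positive height, since the value of $\tropmoddegenFM \to \Rgeq$ on a primitive ray generator is the multiplicity of that divisor in the pullback of $b_0$. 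The correct argument is that $D_{\omega}(J)$ dominates $B$, hence has multiplicity zero in the special fibre, so the edge coordinates are killed by the height map --- which is precisely what lets the metric-tree data survive in every fibre and makes the fibres over $0$ and $1$ the moduli of forests and of degenerate planted forests. Two smaller corrections: $\pointsdegen$ has no horizontal boundary divisors (the paper endows $X$ with trivial logarithmic structure, so the boundary of $\pointsdegen$ consists exactly of the components of the special fibre, indexed by rigid types), so your parenthetical there should be dropped; and since $\tropmoddegen$ can be non-simplicial (the paper's remark for $r=3$, $n=2$), a ray enumeration plus a codimension count does not pin down $\cone(\tau)$ --- but your closing local-toric-model argument, exhibiting the local model at a stratum of type $\tau_{\mathrm{FM}}$ as the product of the model of $\cone(\tau)$ with one smooth divisor per edge-creating blow-up, is the right mechanism and is what actually delivers the product decomposition $\cone(\tau) \times \prod_{j} \Rgeq^{|E(\T_{v_j})|}$.
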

 
\begin{proposition} \label{prop:log-smooth-degen}
	The morphism $\pi_{\mathrm{FM}}^W$ is logarithmically smooth.
\end{proposition}
\begin{proof}
	Recall that $\FMdegenfam$ is an iterated blow-up of $\FMdegen \times_{\pointsdegen} \pointsdegenfam$. Since $\pointsdegenfam \to \pointsdegen$ is logarithmically smooth by Remark~\ref{rem:log-smooth}, so is \[\FMdegen \times_{\pointsdegen} \pointsdegenfam \to \FMdegen.\] Thus, it remains to prove that the morphism is logarithmically smooth on the exceptional divisors of $\FMdegenfam$.
	
	Let $p$ be a point on an exceptional divisor $D_{\omega}(J)^+$ of $\FMdegenfam$. Let $q$ be the image of $p$ in $\FMdegen$; it lies on the exceptional divisor $D_{\omega}(J)$ of $\FMdegen$. We noted in Observation~\ref{obs:FMdegenfibres} that the strict transform of the section \[D_{\omega}(J) \hookrightarrow \FMdegen \times_{\pointsdegen} \pointsdegenfam\] lands in the relative smooth locus, hence one can choose an open neighbourhood $V$ of $q$ small enough that, after passing to an open neighbourhood of $D_{\omega}(J)^+$, the preimage of $V$ in $\FMdegenfam$ is locally isomorphic to $\bl{D_{\omega}(J)}{\AA^d \times V}$, where $d$ is the dimension of $X$. 
	
	Let $\A_V$ denote the Artin fan of $\FMdegen$ restricted to the open neighbourhood $V$, and let $\A$ denote the Artin stack $[\AA^1/\Gm]$. The local model of $\pi_{\mathrm{FM}}^W$ over $V$ satisfies the following Cartesian diagrams of logarithmic stacks
	
	\begin{equation*}
		\begin{tikzcd}
			\bl{D_{\omega}(J)}{\AA^d \times V} \arrow[r] \arrow[d, "\pi_{\mathrm{FM}}^W"] \fspullback{rd} & \bl{s' }{\AA^d \times \A_{V}} \arrow[r] \arrow[d] \fspullback{rd} & \bl{s}{\AA^d \times \A} \arrow[d, "\text{log smooth}"]\\
			V \arrow[r, "\text{strict}"] & \A_{V} \arrow[r] & \A, 
		\end{tikzcd}
	\end{equation*}
	where $s$ denotes $\{0\} \times \B\Gm$, and $s'$ denotes the preimage of $\{0\} \times \B\Gm$ in $\AA^d \times \A_{V}$.
	
	The logarithmic structure on $\bl{s}{\AA^d \times \A}$ is given by the preimage of $\B\Gm$, consisting of two divisor components intersecting transversely. The morphism $\bl{s}{\AA^d \times \A} \to \A$ is logarithmically smooth because $\bl{(0,0)}{\AA^d \times \AA^1} \to \AA^1$ has local model $\AA^{d+1} \to \AA^1$ given by $(x_1, x_2, \dots, x_{d+1}) \mapsto x_1 x_2$, hence logarithmically smooth. As logarithmic smoothness is stable under base change in the category of logarithmic stacks, the morphism $\pi_{\mathrm{FM}}^W$ is logarithmically smooth at $p$.

\end{proof}

We are now ready to prove Theorem~\ref{thm:FMdegen}.

\begin{proof}[Proof of Theorem~\ref{thm:FMdegen}]
	The scheme $\FMdegen$ is proper because it is constructed as an iterated blow-up of a proper scheme $\pointsdegen$. By \cite[Corollary II.4.8(e)]{hartshorne}, the morphism $\FMdegen \to B$ is proper because $\FMdegen$ is proper and $B$ is separated. 
	
	We check that the morphism $\FMdegen \to B$ is flat. We observe that $B$ is smooth, and the fibres are equidimensional by Observation~\ref{obs:special-fibre}. Since $\pointsdegen$ has at worst toric singularities, hence it is Cohen--Macaulay. It follows that $\FMdegen$, which is an iterated blow-up of $\pointsdegen$ along smooth centres, is Cohen--Macaulay. By miracle flatness, the morphism $\FMdegen \to B$ is flat.
	
	To prove that $\FMdegen \to B$ is logarithmically smooth, it is equivalent to prove that $\FMdegen \to B \times_{\A} \cal{E}_n^{\mathrm{FM}}(\Delta)$ is smooth, where $\cal{E}_n^{\mathrm{FM}}(\Delta)$ is the Artin fan of $\tropmoddegenFM$. Since each stratum of $\FMdegen$ is smooth and of expected dimension, $B \times_{\A} \cal{E}_n^{\mathrm{FM}}(\Delta)$ is smooth and $\FMdegen$ is Cohen--Macaulay, the morphism is flat and has smooth fibres, hence smooth.
	 
	The morphism $\pi_{\mathrm{FM}}^W$ is a family of stable $n$-pointed Fulton--MacPherson expanded degenerations by Lemma~\ref{lem:FMdegenfibres}. 
	 
	As $\pi_{\mathrm{FM}}^W$ is logarithmically smooth by Proposition~\ref{prop:log-smooth-degen}, by Tsuji \cite[Theorem II.4.2]{tsuji} to prove flatness it suffices to prove that the corresponding map of tropicalisations is combinatorially flat. Here it is crucial to use the refined logarithmic structure.

%
	 
	 Over a cone in $\FMdegen$ corresponding to $\tau_{\mathrm{FM}}$, there can be three possible cones in $\FMdegenfam$ mapping to the cone in $\FMdegen$:
	 \begin{itemize}
	 	\item The point $p_0$ lies on $\mathsf{S}$. The combinatorial type is some $(\mathsf{S}^+, m^+, \T_{v_1}, \dots, \T_{v_k})$, where $\tau^+ = (\mathsf{S}^+, m^+)$ is a combinatorial type in the family $\tropfamdegen$ lying over the combinatorial type $\tau$ in $\tropmoddegen$. The corresponding cone is then $\cone(\tau^+) \times \prod_{i = 1}^{k} \Rgeq^{|E(\T_{v_i})|}$. 
	 	\item The point $p_0$ lies on a vertex of a tree $\T_{v_j}$. In this case, the cone corresponding to this combinatorial type is the same as the cone $\cone(\tau) \times \prod_{i = 1}^{k} \Rgeq^{|E(\T_{v_i})|}$, as there is no additional length parameter.
	 	\item The point $p_0$ lies on a bounded edge $e$ of a tree $\T_{v_j}$. In this case, the position of $p_0$ on the edge introduces an additional length parameter $e_1$ on the combinatorial type, therefore the cone is $\cone(\tau) \times \prod_{i \neq j} \Rgeq^{|E(\T_{v_i})|} \times \Rgeq^{|E(\T_{v_j})|+1}$.
	 \end{itemize}
	 We have listed all the cones in $\tropfamdegenFM$. For combinatorial flatness, we need to show that each cone in $\tropfamdegenFM$ surjects onto a cone in $\tropmoddegenFM$. In the first scenario, combinatorial flatness follows from the combinatorial flatness of $\tropfamdegen \to \tropmoddegen$, whereas the second scenario is trivial, as the cone map is the identity. As for the third scenario, we note that the map $\Rgeq^{|E(\T_{v_j})|+1} \to \Rgeq^{|E(\T_{v_j})|}$ is surjective, as the map $({\Rgeq^2})_{e_1,e_2}\to ({\Rgeq})_e$ given by $(e_1,e_2) \mapsto e_1 + e_2$ is surjective. The map $\Rgeq^{|E(\T_{v_j})|+1} \to \Rgeq^{|E(\T_{v_j})|}$ arises from the splitting of the edge $e$ into edges $e_1, e_2$ by $p_0$. 
	 
	 Therefore, the map $\tropfamdegenFM \to \tropmoddegenFM$ is combinatorially flat, and this concludes the proof.
\end{proof}

\subsection{Decomposing the special fibre} \label{subsection:degen_formula}
This subsection relies heavily on constructions in \cite[Section 10]{log_enum}.

Given a degeneration $W \to B$ of $X$, with $Y_1, Y_2, \dots, Y_r$ as the irreducible components of the special fibre $W_0$, we now relate each irreducible component $\FMdegen(\rho)$ of the special fibre of the degeneration $\FMdegen \to B$ to some logarithmic Fulton--MacPherson spaces of pairs $(Y_v, Y_v \cap D_{\rho})$, where $D_{\rho}$ denotes the singular locus of the associated expansion $Y_{\rho}$. 

\subsubsection{Setup and outline} \label{subsubsection:setup}
Given an irreducible component $\pointsdegen(\rho)$ of $(\pointsdegen)_0$ corresponding to a rigid type $\rho$ (see Section~\ref{section:boundary_degen}), we consider the divisor on $\pointsdegen(\rho)$ given by the intersection of $\pointsdegen(\rho)$ with the boundary divisor on $\pointsdegen$. This equips $\pointsdegen(\rho)$ with a divisorial logarithmic structure which is generically trivial, and its tropicalisation is then the star cone complex $\tropmoddegen(\cone(\rho))$. 

\noindent \textbf{Star of a cone.} The reference here is \cite[Section 1.6]{log_enum}. Let $\sigma$ be a cone in a cone complex $\Sigma$, with lattice $N_{\sigma}$. We define the \emph{overstar} of $\sigma$ to be the complement of cones in $\Sigma$ that do not contain $\sigma$ as a face; this is a topological space.
For every cone $\tau$ with lattice $N_{\tau}$ containing $\sigma$ as a face, we have an exact sequence
\[ 0 \to N_{\sigma} \to N_{\tau} \to N_{\tau}(\sigma) \to 0,
\] and after tensoring by $\R$, we have a quotient map $N_{\tau} \otimes {\R} \to N_{\tau}(\sigma) \otimes \R$. Let $\tau_{\circ}^{\sigma}$ be the intersection of $\tau$ with the overstar of $\sigma$. Let $\overline{\tau}$ be the image of $\tau_{\circ}^{\sigma}$ in $N_{\tau}(\sigma) \otimes \R$ under the quotient map; it is a cone in $N_{\tau}(\sigma) \otimes \R$ with lattice $N_{\tau}(\sigma)$, and one can verify that such cones, taken for all cones $\tau$ containing $\sigma$, form a cone complex.

\begin{definition}
	The \emph{star} of $\sigma$ is the cone complex $\Sigma(\sigma)$ consisting of all image cones $\overline{\tau}$ ranging over all cones $\tau$ containing $\sigma$ as a face. 
\end{definition}
This definition, which follows \cite[Definition~1.6.1]{log_enum}, outputs the same construction as the definition in \cite[(3.2.8)]{cls}, but is more suited for the arguments in Section~\ref{subsubsection:metric}.

As explained in \cite[Section~10.2]{log_enum}, if $(X,D)$ is a simple normal crossings pair with tropicalisation $\Sigma_X$, and $W \subset X$ is the closure of the stratum $W_{\sigma}$ associated to a cone $\sigma$ in $\Sigma$, then we can endow $W$ the logarithmic structure associated to the simple normal crossings pair $(W, W \backslash W_{\sigma})$, and the tropicalisation is $\Sigma(\sigma)$.\\

For a rigid combinatorial type $\rho$, let $(\cal{S}_{\rho},m)$ denote the unique $n$-marked simplex lattice subdivision with type $\rho$, which induces an expanded degeneration $W_{\rho} \to B$. Denote by $Y_{\rho}$ its special fibre, with singular locus $D_{\rho}$. For a vertex $v \in \cal{S}_{\rho}$, we denote by $Y_v$ the irreducible component of $Y_{\rho}$ associated to $v$, and $I_v = m^{-1}(v)$ be the subset of markings on $v$. We denote by $\Sigma_v$ the tropicalisation of $Y_v$, where $Y_v$ is equipped with the divisorial logarithmic structure with respect to $D_v \colonequals Y_v \cap D_{\rho}$. As before, the tropicalisation $\Sigma_v$ coincides with the star fan $\Sigma_{W_{\rho}}(\cone(v))$.

We construct a combinatorial cutting map
\[ \tropmoddegen(\cone(\rho)) \xrightarrow{\kappa} \prod_{v \in V(\cal{S}_{\rho})} \Pi_{I_v}(\Sigma_v),\] and consequently a logarithmic modification \[\pointsdegen(\rho) \to \prod_{v \in V(\cal{S}_{\rho})} \logProdexample{Y_v}{D_v}{I_v},\] followed by a lifting to a modification (the \emph{degeneration formula}) \[\FMdegen(\rho) \to \prod_{v \in V(\cal{S}_{\rho})} \logFMexample{{I_v}}{Y_v}{D_v}.\]

\subsubsection{The combinatorial cutting map}
For a combinatorial type $\tau$ with corresponding $\cone(\tau)$ in $\tropmoddegen$, let $\mathcal{G}_{\cone(\tau)}$ denote the preimage of $\cone(\tau)$ in $\tropfamdegen$. Given $\cone(\tau)$ of $\tropmoddegen$ with the ray $\cone(\rho)$ as one of the faces, we may restrict Diagram~(\ref{diag:tropmoddegen_sigma}) to a family of polyhedral complexes
\begin{equation*}
	\begin{tikzcd}
		\mathcal{G}_{\cone(\tau)}  \arrow[d, "p"'] \arrow[r] & \S_{W_{\rho}} \arrow[ldd]                      \\
		\cone(\tau) \arrow[u, "{s_1,\ldots,s_n}"', bend right]\arrow[d, "h"'] & \\
		\R_{\geq 0}.
	\end{tikzcd} 
\end{equation*}

Fix a vertex $v$ of $\cal{S_{\rho}}$; this gives a ray $\cone(v)$ in the polyhedral complex $\mathcal{G}_{\cone(\rho)}$, which is contained in the polyhedral complex $\mathcal{G}_{\cone(\tau)}$.
This gives a diagram of pairs of cone complexes and rays:
\begin{equation*}
	\begin{tikzcd}
		(\mathcal{G}_{\cone(\tau)}, \cone(v))  \arrow[d, "p"'] \arrow[r] & (\S_{W_{\rho}}, \cone(v))                      \\
		(\cone(\tau), \cone(\rho)). \arrow[u, "{s_j, \; j\in I_v}"', bend right] &
	\end{tikzcd}
\end{equation*}

Taking the star of each pair gives a diagram of cone complexes:
\begin{equation*}
	\begin{tikzcd}
		\mathcal{G}_{\cone(\tau)}(\cone(v))  \arrow[d, "p"'] \arrow[r] & \S_v                      \\
		\cone(\tau)(\cone(\rho)). \arrow[u, "{\overline{s}_j, \; j\in I_v}"', bend right] &
	\end{tikzcd}
\end{equation*}

The above diagram can be interpreted as a family over $\cone(\tau)(\cone(\rho))$ of points in $\Sigma_v$. Therefore, we have a map
\[\cone(\tau)(\cone(\rho)) \xrightarrow{\kappa_v} \Pi_{I_v}(\Sigma_v),
\] where $\Pi_{I_v}(\Sigma_v)$ is the tropical moduli space of $I_v$-labelled points. We observe that $\kappa_v$ is a map of cone complexes, as two point configurations on $\Delta_{t_1}, \Delta_{t_2}$ respectively with the same combinatorial type $\tau$ will induce point configurations on $\Sigma_v$ with the same combinatorial type. This is illustrated in Figure~\ref{fig:cutting_map}.

\begin{figure}[h]
	\centering
	\includegraphics[width=.7\textwidth]{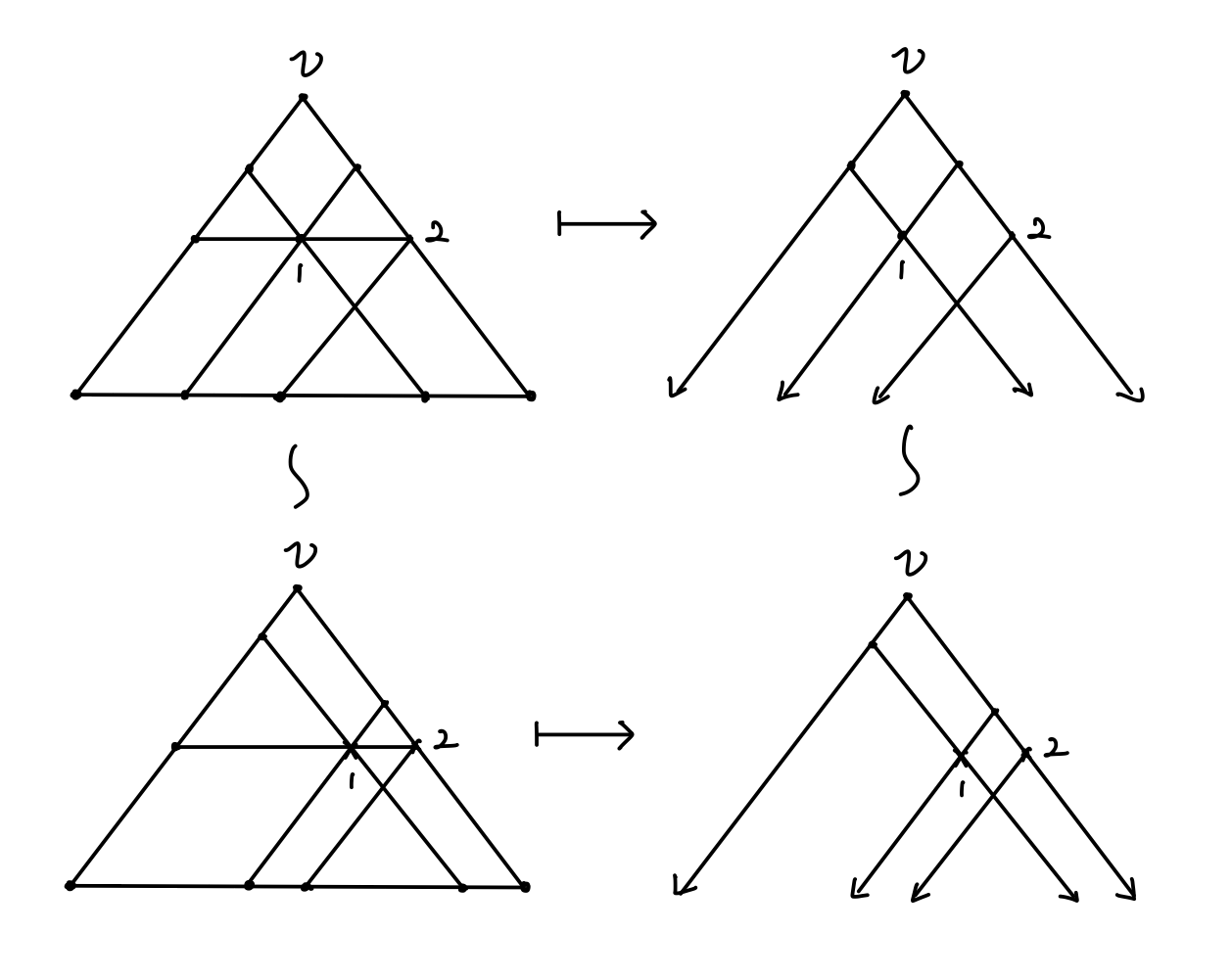}
	\caption{$\kappa_v$ is a map of cone complexes}
	\label{fig:cutting_map}
\end{figure}

In fact, the maps $\kappa_v$ are compatible with specialisation of combinatorial types, and therefore glue across cones to form a map of cone complexes
\[\tropmoddegen(\cone(\rho)) \xrightarrow{\kappa_v} \Pi_{I_v}(\Sigma_v).
\]

Combining these maps for all rays $r_i$ gives a \emph{combinatorial cutting map} 
\[\tropmoddegen(\cone(\rho)) \xrightarrow{\kappa} \prod_{v \in V(\cal{S}_{\rho})} \Pi_{I_v}(\Sigma_v).
\]

We have the following claim.
\begin{lemma}\label{lem:cutting_map}
	The combinatorial cutting map is a subdivision.
\end{lemma}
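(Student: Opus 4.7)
The strategy is to check the two defining properties of a subdivision, namely bijectivity on supports and compatibility of the lattice structures, by giving explicit combinatorial descriptions of the cones on both sides of $\kappa$.

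First I would analyse the cones on each side explicitly. A cone of $\tropmoddegen(\cone(\rho))$ corresponds to a combinatorial type $\tau = (\F_{\cal{S}} \to \F_{\Delta}, m)$ in $\tropmoddegen$ with $\rho$ as a face, that is, an $n$-marked simplex-lattice subdivision $(\cal{S}, m)$ whose closure includes $(\cal{S}_{\rho}, m_{\rho})$. Since $\rho$ is rigid, each marking $i \in I_v$ sits in a small neighbourhood of $v$ in the deformation captured by $\tau$, and by Remark~\ref{rem:combtype_degen} the cone $\cone(\tau)/\cone(\rho)$ is cut out by the linear inequalities among the coordinates $u_i^{(j)}$ for $i \in I_v$. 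Dually, a cone of $\prod_{v} \Pi_{I_v}(\Sigma_v)$ is a product, indexed by $V(\cal{S}_{\rho})$, of combinatorial types of $I_v$-marked configurations on the star fan $\Sigma_v = \Sigma_{W_{\rho}}(\cone(v))$.

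Next I would prove bijectivity on supports by constructing an inverse. Injectivity is immediate, since the global combinatorial type $\tau$ is determined by the local configuration it induces near each vertex of $\cal{S}_{\rho}$: two deformations inducing the same data on every $\Sigma_v$ must coincide. For surjectivity, given point configurations on each $\Sigma_v$, I would glue them into a single refinement of $\cal{S}_{\rho}$ by taking the union of the local hyperplane arrangements near each vertex. The crucial observation is that the hyperplane $H_{ij}$ depends only on the coordinate $u_i^{(j)}$ of a single marking $i \in I_v$, so arrangements near distinct vertices do not interact: the local deformations at different vertices of $\cal{S}_{\rho}$ are combinatorially independent. This produces a unique combinatorial type $\tau$ in $\tropmoddegen(\cone(\rho))$ whose cutting recovers the original local data.

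Finally I would verify the lattice condition. The lattice of $\tropmoddegen(\cone(\rho))$ is induced from the sublattice of $(\Z^r)^n$ chosen in Proposition~\ref{prop: semistable_degen}, and the lattice of each $\Sigma_v$ is inherited from $\Sigma_{W_{\rho}}$ through the star fan construction. Since $\kappa$ acts by restricting the coordinates $u_i^{(j)}$ for $i \in I_v$ to the local model at $v$, integer points correspond bijectively on each cone. The main obstacle I expect is the careful justification that this decomposition is compatible with the modified lattices: one must check that the integrality conditions imposed globally on $\tropmoddegen$ to enforce combinatorial reducedness of the height map pull back correctly under the cutting, rather than being coarser than the product of local lattices. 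This amounts to tracking how the scaling factor $h_{\mathrm{tot}}$ interacts with the lattices inherited on each $\Sigma_v$, and is the one place where the rigidity of $\rho$ is essential to the argument.
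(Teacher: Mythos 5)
Your overall outline (bijectivity on supports plus the lattice condition) matches what the definition of subdivision requires, and your injectivity sketch is essentially the paper's argument in combinatorial clothing: the slice neighbourhoods $\cal{S}^o_{\rho,t}(v)$ cover $\cal{S}_{\rho,t}$, so the local data at all vertices recovers the full configuration. However, there are two genuine problems with the rest.

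First, the claim that ``arrangements near distinct vertices do not interact'' and that the local deformations are ``combinatorially independent'' is false, and it is exactly the point the construction is designed to accommodate. The hyperplane $H_{ij} = \{x_j = u_i^{(j)}\}$ attached to a marking $i \in I_v$ is a global hyperplane in $\Delta$: it cuts through the polyhedra of $\cal{S}_{\rho}$ near \emph{other} vertices $w \neq v$, so the global combinatorial type of the subdivision can jump while every local configuration stays in a fixed type. This is precisely why $\kappa$ is only a subdivision and not an isomorphism of cone complexes (see the paper's remark and the figure following the lemma). Your argument, taken at face value, would prove that $\kappa$ is a bijection on cones, which is too strong. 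The correct statement is only that $\kappa$ is a bijection on the underlying sets of points, with the source cone structure refining the target.

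Second, your surjectivity argument is missing the key analytic step: given arbitrary $I_v$-labelled configurations on the star fans $\Sigma_v$, these points can sit arbitrarily far from the origin of $\Sigma_v$, and one must choose the height $t$ \emph{sufficiently large} so that all of them simultaneously fit inside the slice neighbourhoods $\cal{S}^o_{\rho,t}(v) \subset \Delta_t$; only then does the union of the local data define an honest point of $\Delta_t^n$, and one further needs the convex-geometry observation that the resulting point lies in the overstar of $\cone(\rho)$ so that it descends to the star fan. Without controlling $t$, ``taking the union of the local hyperplane arrangements'' does not produce a point of $\tropmoddegen(\cone(\rho))$ at all. The lattice statement, which you flag as the main difficulty, is in fact handled by the same device: one simply chooses $t \in h_{\mathrm{tot}}\Z$ in the surjectivity construction.
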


Following \cite[Section 10.4]{log_enum}, we first give an equivalent description of the map $\kappa_v$ in terms of metric geometry, then prove the lemma using this new description.

\subsubsection{The map $\kappa_v$, via metric geometry.} \label{subsubsection:metric}
We first recall that given any two points in the relative interior of $\cone(\tau)$, the fibres of the family $\tropfamdegen \to \tropmoddegen$ are polyhedral complexes with the same combinatorial type. We denote the resulting combinatorial polyhedral complex by $\mathsf{G}_\tau$, and we denote its 0-skeleton by $\mathsf{G}_\tau^0$.

Let $\mathcal{S}_{\rho,t}$ denote the preimage in $\tropfamdegen$ of the height $t$ point on the ray $\cone(\rho)$. Given $\cone(\tau)$ with a ray $\cone(\rho)$ as one of its faces, we subdivide the preimage in $\tropfamdegen$ of $\cone(\tau)$ such that for every point $q \in \cone(\tau)$ of height $t \in \Rgeq$, the slice $\mathcal{S}_q$ is a polyhedral subdivision of $\mathcal{S}_{\rho,t}$.

We recall the retraction $\psi\colon \mathsf{G}_\tau^0 \to \mathsf{G}_\rho^0$ defined in \cite[Section 10.4]{log_enum}, as follows. Fix a marked vertex $u_j$ of $\mathsf{G}_\tau^0$, then the image of the corresponding section $s_j\colon \cone(\tau) \to \cal{G}_{\cone(\tau)}$ is a cone in $\cal{G}_{\cone(\tau)}$ by combinatorial semistable reduction, mapping isomorphically by $p$ onto $\cone(\tau)$. Hence, there is a unique ray $u_\rho$ in $s_j$ which lies over $\cone(\rho)$, and $u_\rho$ is necessarily contained in $\cal{G}_{\cone(\rho)}$. This specifies a unique vertex $v$ in $\mathsf{G}_\rho^0$, and we let $\psi(u_j)$ be $v$. Note that the retraction $\psi$ is determined by the choice of the ray $\cone(\rho)$, and we see that $I_v$ is the set of labels of vertices in $\psi^{-1}(v)$. 

We next recall the construction of \emph{slice neighbourhoods} with respect to $\Sigma_{W_{\rho}} \to \Rgeq$. For a fixed value of $t\in \Rgeq$, we consider the slice $\cal{S}_{\rho,t}$, whose vertices biject with rays of $\Sigma_{W_{\rho}}$ which surject onto $\Rgeq$. Let $v$ be one of the vertices of $\cal{S}_{\rho,t}$. Consider the subspace $\cal{S}^o_{\rho,t}(v)$, which is the complement of faces of $\cal{S}_{\rho,t}$ which do not contain $v$. It is contained in the overstar of $\cone(v)$ (defined in Section~\ref{subsubsection:setup}), and therefore projects isomorphically onto its image in the star $\Sigma_v$. The set $\cal{S}^o_{\rho,t}(v)$ or its image in $\Sigma_v$ is called the \emph{slice neighbourhood} of 0 in $\Sigma_v$ at height $t$. 

For a point $\overline{q} \in \cone(\tau)(\cone(\rho))$, we describe the map $\overline{q}\mapsto \kappa_v(\overline{q})$. Choose a lift $q$ in $\cone(\tau)$ of height $t\in \Rgeq$. We note that $\mathcal{G}_q$ is a polyhedral subdivision of $\cal{S}_{\rho,t}$. We consider the points $s_1(q), \dots, s_n(q)$ in $\mathcal{G}_q$ contained in $\psi^{-1}(v)$. Such points lie in $\cal{S}^o_{\rho,t}(v)$, consequently they project to points in the slice neighbourhood in $\Sigma_v$. The data of such $I_v$-labelled points in $\Sigma_v$ defines a point in $\Pi_{I_v}(\Sigma_v)$, which coincides with $\kappa_v(\overline{q})$. We note that this construction is independent of the choice of the lift $q$, and sends a lattice point $\overline{q}$ to a lattice point $\kappa_v(\overline{q})$ .

We now prove Lemma~\ref{lem:cutting_map} by emulating the proof of \cite[Theorem 10.6.1]{log_enum}.

\begin{proof}[Proof of Lemma~\ref{lem:cutting_map}] 
	
	We prove that $\kappa$ is bijective on the supports, and on the lattices.
	
	\smallskip
	\noindent \textbf{Injectivity.} Consider two points $\overline{q}_1, \overline{q}_2$ in the support of $\cone(\tau)(\cone(\rho))$ such that $\kappa(\overline{q}_1)= \kappa(\overline{q}_2)$. Then $\kappa_v(\overline{q}_1)= \kappa_v(\overline{q}_2)$ for all $v \in V(\cal{S}_{\rho})$. We take $t \in \Rgeq$ sufficiently large such that for each $v$, the slice neighbourhood of height $t$ in $\Sigma_v$ contains the $I_v$-labelled points on $\Sigma_v$ corresponding to $\kappa_v(\overline{q}_1)= \kappa_v(\overline{q}_2) \in \Pi_{I_v}(\Sigma_v)$. 
	
	Choose corresponding lifts $q_1, q_2 \in \cone(\tau)$ over $t$. By the preceding argument, $\kappa_v(\overline{q}_1)$ is the data of $I_v$-labelled points on the slice neighbourhood of height $t$ in $\Sigma_v$, whose preimages are $I_v$-labelled points on \[\mathcal{G}_{q_1} \cap \cal{S}^o_{\rho,t}(v).\] Repeating the procedure for all $v$, we recover all the points $s_1(q_1), \dots, s_n(q_1)$ in $\mathcal{G}_{q_1}$, as the slice neighbourhoods $\cal{S}^o_{\rho,t}(v)$ form an open cover of $\cal{S}_{\rho,t}$. But as $\kappa_v(\overline{q}_1)= \kappa_v(\overline{q}_2)$ for all $v$, therefore $q_1 = q_2$, and $\overline{q}_1 = \overline{q}_2$.
	
	\smallskip
	\noindent \textbf{Surjectivity.} Given a collection of $I_v$-labelled points on $\Sigma_v$ over all $v$, say \[\big((\overline{u}_{v,j})_{j\in I_v}\big)_{v \in V(\cal{S}_{\rho}) } \in  \prod_{v \in V(\cal{S}_{\rho})} \Pi_{I_v}(\Sigma_v),\] as before, take $t\in \Rgeq$ sufficiently large such that for all $v$, the slice neighbourhood at height $t$ in $\Sigma_v$ contains the respective $I_v$-labelled points. These give rise to points $({u}_{v,j})_{j\in I_v}$ on $\cal{S}^o_{\rho,t}(v)$, and by taking union over $v$, we obtain $n$ points on $\Delta_t$, corresponding to a point $q \in \tropmoddegen$. By convex geometry, the point $q$ lies in the overstar of $\cone(\rho)$ for sufficiently large $t$. Therefore, $q$ descends to a point $\overline{q}$ in $\tropmoddegen(\cone(\rho))$, and $\kappa(\overline{q})$ recovers the collection $(\overline{u}_{ij})_{j\in I_i}$.
	
	\smallskip
	\noindent \textbf{Bijection on lattice points.} If $((\overline{u}_{v,j})_{j\in I_v})_v$ are lattice points, then in the proof of surjectivity we choose $t \in h_{\text{tot}}\Z$ (c.f. Proposition~\ref{prop: semistable_degen}) so that $\overline{q}$ is a lattice point.
\end{proof}

\begin{remark}
	To see that $\kappa$ is in general not an isomorphism of cone complexes, consider the following example showing a refinement of combinatorial types:
	\begin{figure}[h]
		\centering
		\includegraphics[width=\textwidth]{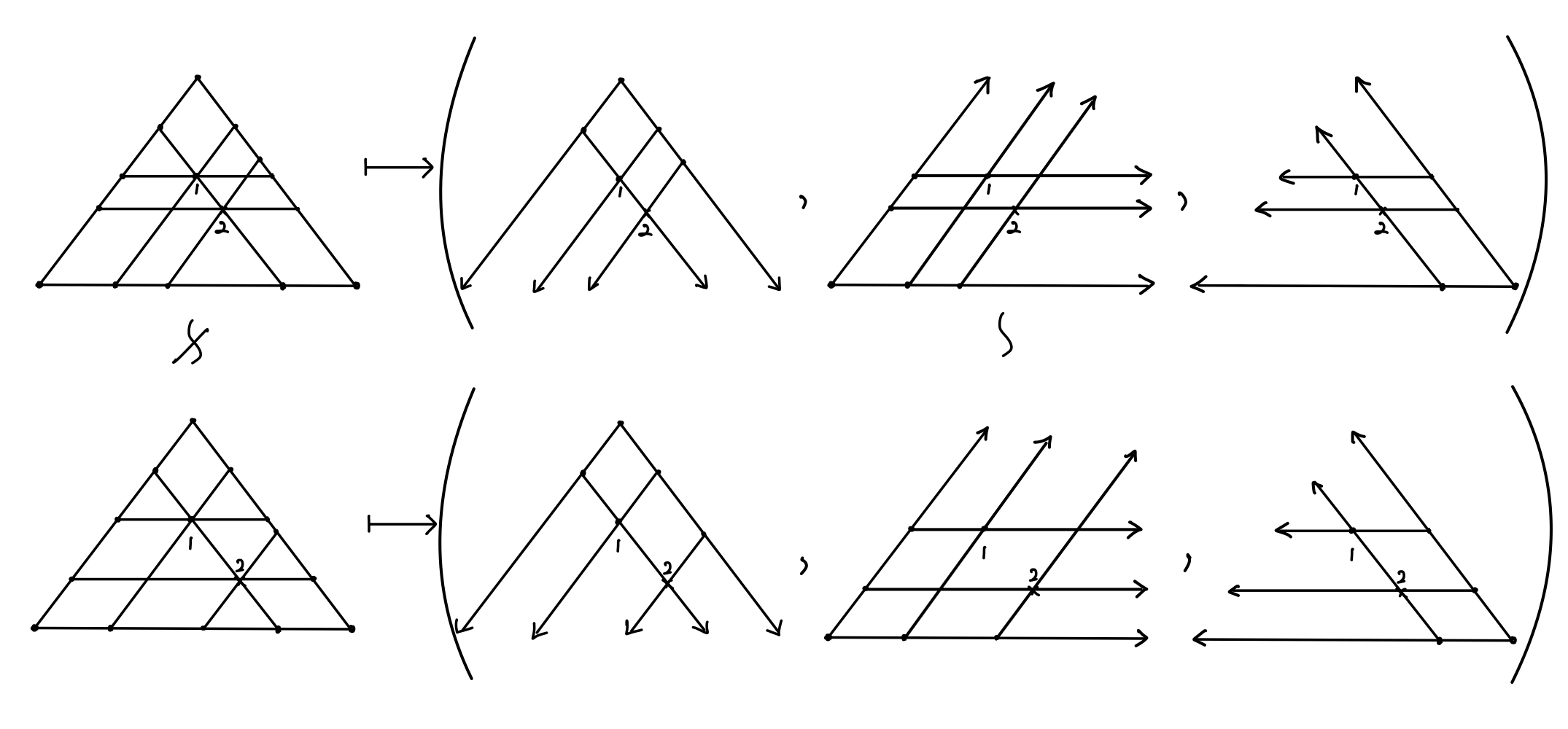}
		\caption{A refinement of combinatorial types}
		\label{fig:cutting_refinement}
	\end{figure}
\end{remark}

\subsubsection{The logarithmic modification}

\begin{theorem}\label{thm:points_degen_formula}
	The combinatorial cutting map induces a logarithmic modification \[\pointsdegen(\rho) \to \prod_{v \in V(\cal{S}_{\rho})} \logProdexample{Y_v}{D_v}{I_v}.\]
\end{theorem}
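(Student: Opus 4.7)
The approach is to lift the combinatorial cutting map $\kappa$ of Lemma~\ref{lem:cutting_map} to a morphism of logarithmic schemes, and then invoke the equivalence of Theorem~\ref{thm:CCUW} to conclude that it is a logarithmic modification. Concretely, the plan is to construct the morphism via the modular descriptions of both sides, identify the induced map on Artin fans with $\kappa$, and conclude using the subdivision property.

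First, I would construct the morphism $\pointsdegen(\rho) \to \prod_{v} \logProdexample{Y_v}{D_v}{I_v}$ via the moduli interpretations. A $T$-point of $\pointsdegen(\rho)$ is a family $(\cal{W}_T \to T, s_1, \ldots, s_n)$ of stable $n$-pointed expanded degenerations whose combinatorial types refine $\rho$. For each vertex $v \in V(\cal{S}_\rho)$, let $\cal{Y}_{T,v} \subset (\cal{W}_T)_0$ be the union of those irreducible components of the special fibre which contract to $Y_v \subset Y_\rho$ under the map $(\cal{W}_T)_0 \to Y_\rho$, equipped with the sections $\{s_i\}_{i \in I_v}$. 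By construction this is a family of stable $I_v$-pointed grid expansions of $(Y_v, D_v)$, and hence by Theorem~\ref{thm:logprod} defines a morphism $T \to \logProdexample{Y_v}{D_v}{I_v}$. Taking the product over $v$ yields the desired morphism.

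Next, I would identify the induced morphism on Artin fans with the one determined by $\kappa$ under Theorem~\ref{thm:CCUW}. The Artin fans of $\pointsdegen(\rho)$ and of $\prod_{v} \logProdexample{Y_v}{D_v}{I_v}$ are the ones associated to $\tropmoddegen(\cone(\rho))$ and $\prod_v \Pi_{I_v}(\Sigma_v)$ respectively. Agreement with $\kappa$ then follows from the metric-geometry description of $\kappa_v$: the $I_v$-labelled points in the slice neighbourhood of $v$ in $\Sigma_v$ correspond precisely to the tropical data of $\cal{Y}_{T,v}$. Since $\kappa$ is a subdivision by Lemma~\ref{lem:cutting_map}, the induced morphism of Artin fans is a logarithmic modification. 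Pulling back along the strict morphisms to the respective Artin fans, which fits into a Cartesian fs-logarithmic square, shows that the scheme-level morphism $\pointsdegen(\rho) \to \prod_v \logProdexample{Y_v}{D_v}{I_v}$ is also a logarithmic modification.

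The main anticipated obstacle lies in the first step: verifying rigorously that the restriction-to-component construction is well-defined on families and genuinely produces stable $I_v$-pointed grid expansions. This requires checking compatibility with fs-logarithmic pullbacks, as well as understanding how a refined subdivision $\cal{S}$ extending $\cal{S}_\rho$ interacts with the induced grid subdivisions of the star fans $\Sigma_v$. The gluing data implicit in this construction---namely, reconstructing a global expansion of $Y_\rho$ from its pieces over each vertex $v$, meeting along the shared boundary strata $D_v \cap D_{v'}$---is precisely what guarantees that the fs-logarithmic square in the last step is Cartesian, so that the final pullback argument goes through.
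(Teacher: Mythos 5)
Your overall strategy coincides with the paper's: both arguments rest on Lemma~\ref{lem:cutting_map} and on cutting a family of expanded degenerations into pieces indexed by the vertices of $\cal{S}_{\rho}$. However, there is a genuine gap in your first step. You assert that the union $\cal{Y}_{T,v}$ of components contracting to $Y_v$, equipped with the sections $\{s_i\}_{i\in I_v}$, is ``by construction a family of stable $I_v$-pointed grid expansions of $(Y_v,D_v)$.'' This is false in general. The subdivision induced near the vertex $v$ by a simplex-lattice subdivision refining $\cal{S}_{\rho}$ is cut out by \emph{all} the hyperplanes $H_{ij}$, including those attached to markings $i \notin I_v$; consequently the expansion one reads off over $v$ can be strictly finer than the canonical grid expansion determined by the positions of the $I_v$-labelled points alone. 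This is exactly why $\kappa$ is only a subdivision and not an isomorphism of cone complexes (see the remark following Lemma~\ref{lem:cutting_map} and Figure~\ref{fig:cutting_refinement}). In the sense of Definition~\ref{lem:gridexppullback} --- which requires the family to be pulled back from the universal family over $\cal{E}_{I_v}(Y_v|D_v)$ --- the family $\cal{Y}_{T,v}$ is therefore not a family of grid expansions, and Theorem~\ref{thm:logprod} does not directly produce the morphism $T \to \logProdexample{Y_v}{D_v}{I_v}$.

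The paper repairs precisely this point by introducing auxiliary moduli: writing the image of $\kappa$ as $\prod_{v} \Pi^{\text{aux}}_{I_v}(\Sigma_v)$, a subdivision of $\prod_{v} \Pi_{I_v}(\Sigma_v)$, one obtains auxiliary spaces $\logProdexample{Y_v}{D_v}{I_v}_{\text{aux}}$, identifies the functor of points of $\pointsdegen(\rho)$ with that of $\prod_{v} \logProdexample{Y_v}{D_v}{I_v}_{\text{aux}}$ (this is where your cutting-and-gluing of families lives, and where it is correct), and then composes with the logarithmic modification $\prod_{v} \logProdexample{Y_v}{D_v}{I_v}_{\text{aux}} \to \prod_{v} \logProdexample{Y_v}{D_v}{I_v}$ induced by the subdivisions. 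Your closing paragraph correctly locates this as the main obstacle, but the proof as written does not resolve it; without the auxiliary intermediate, the Cartesian fs-square claimed in your final step is likewise unjustified, since establishing it amounts to the same functor-of-points identification.
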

\begin{proof}
	The bulk of the proof lies in Lemma~\ref{lem:cutting_map}, and it only remains to lift the combinatorial subdivision to a logarithmic modification. 
	We denote the image of $\kappa$ as \[\prod_{v \in V(\cal{S}_{\rho})} \Pi_{I_v}^{\text{aux}}(\Sigma_v),\] where each $\Pi_{I_v}^{\text{aux}}(\Sigma_v)$ is a subdivision of $\Pi_{I_v}(\Sigma_v)$. As $\kappa$ is an isomorphism onto its image, this induces an isomorphism of Artin fans \[\simpexp(\rho) \cong \prod_{v \in V(\cal{S}_{\rho})} \cal{E}_{I_v}^{\text{aux}}(Y_v|D_v),\] and the subdivisions $\Pi_{I_v}^{\text{aux}}(\Sigma_v) \to \Pi_{I_v}(\Sigma_v)$ induce a logarithmic modification \[\prod_{v \in V(\cal{S}_{\rho})} \logProdexample{Y_v}{D_v}{I_v}_{\text{aux}} \to \prod_{v \in V(\cal{S}_{\rho})} \logProdexample{Y_v}{D_v}{I_v}.\] 
	
	We note that there is an isomorphism between the functors of points of $\pointsdegen(\rho)$ and $\prod_v \logProdexample{Y_v}{D_v}{I_v}_{\text{aux}}$: a family over $S$ of $I_v$-pointed ``auxiliary" expansions of $(Y_v|D_v)$, taken for each $v$, glue together to form a family over $S$ of special fibres of simplex-lattice expanded degenerations of type $\rho$. This gives the required logarithmic modification \[\pointsdegen(\rho) \to \prod_{v \in V(\cal{S}_{\rho})} \logProdexample{Y_v}{D_v}{I_v}.\]

\end{proof}
\begin{corollary}[Degeneration formula] \label{thm:degen_formula}
	The modification in Theorem~\ref{thm:points_degen_formula} lifts to a modification \[\FMdegen(\rho) \to \prod_{v \in V(\cal{S}_{\rho})} \logFMexample{{I_v}}{Y_v}{D_v}.\]
\end{corollary}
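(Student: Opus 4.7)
The plan is to exploit the blow-up descriptions of $\FMdegen \to \pointsdegen$ from Section~\ref{section:degeneration} and of $\logFM \to \logProd$ from Section~\ref{subsection:iteratedbl}, together with the universal property of iterated blow-ups, to lift the modification of Theorem~\ref{thm:points_degen_formula} from $\pointsdegen(\rho)$ to $\FMdegen(\rho)$.

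The first step is to express $\FMdegen(\rho)$, the strict transform of $\pointsdegen(\rho)$ under $\FMdegen \to \pointsdegen$, as an iterated blow-up of $\pointsdegen(\rho)$. On the rigid expansion $Y_{\rho}$ the sections $\mathbf{s}_i$ lie on the prescribed components indexed by $m$, so two sections can collide only when they share a component. Hence $\Domega(J) \cap \pointsdegen(\rho) = \emptyset$ unless $J \subset I_v$ for some $v \in V(\cal{S}_{\rho})$, and these empty centres contribute nothing to the strict transform. For $J \subset I_v$, a local analysis (or an Artin-fan argument analogous to Lemma~\ref{lem:strict_transverse}) shows that the strict transform of $\pointsdegen(\rho)$ meets the dominant transform of $\Domega(J)$ transversely throughout the iteration. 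Consequently, $\FMdegen(\rho)$ is the iterated blow-up of $\pointsdegen(\rho)$ along the restricted centres $\Domega(J) \cap \pointsdegen(\rho)$ (for $J \subset I_v$ and $|J| \geq 2$) in the order inherited from sequence~(\ref{seq:FMdegen}).

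The second step is to identify these centres with those building $\prod_v \logFMexample{I_v}{Y_v}{D_v} \to \prod_v \logProdexample{Y_v}{D_v}{I_v}$. Under the modification $\mu$ of Theorem~\ref{thm:points_degen_formula}, for each $J \subset I_v$ the restriction $\Domega(J) \cap \pointsdegen(\rho)$ equals the $\mu$-preimage of the product diagonal $\delta_v(J) \times \prod_{v' \neq v} \logProdexample{Y_{v'}}{D_{v'}}{I_{v'}}$. Since blow-up centres supported in distinct factors are disjoint and automatically transverse, $\prod_v \logFMexample{I_v}{Y_v}{D_v}$ is itself an iterated blow-up of $\prod_v \logProdexample{Y_v}{D_v}{I_v}$ along the union of these product diagonals, taken in an order compatible with both (\ref{seq:FMdegen}) and (\ref{seq:FM}). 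Thus $\FMdegen(\rho)$ is obtained from $\pointsdegen(\rho)$ by blowing up exactly the $\mu$-pullback of the centres defining $\prod_v \logFMexample{I_v}{Y_v}{D_v}$, and the universal property of iterated blow-ups yields the desired morphism $\FMdegen(\rho) \to \prod_v \logFMexample{I_v}{Y_v}{D_v}$. This morphism is proper because $\FMdegen(\rho)$ is, and it is birational because both sides contain the dense open locus parametrising $n$ distinct points lying in the smooth stratum of $Y_{\rho}$.

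The main obstacle is the iterated transversality verification in the first step: at each stage of the iterated blow-up defining $\FMdegen$, one must check that the strict transform of $\pointsdegen(\rho)$ meets the next dominant transform of $\Domega(J)$ transversely, so that the restriction of the blow-up coincides with the blow-up of the restriction. This mirrors the strategy behind Lemma~\ref{lem:strict_transverse}, and should be handled by combining the logarithmic description of $\pointsdegen(\rho)$ as the closure of the $\B T_{\rho}$-stratum with the transversal nature of the horizontal diagonals $\Domega(J)$.
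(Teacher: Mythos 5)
Your proposal is correct and follows essentially the same strategy as the paper: compose $\FMdegen(\rho) \to \pointsdegen(\rho)$ with the modification of Theorem~\ref{thm:points_degen_formula}, use that $\prod_{v} \logFMexample{{I_v}}{Y_v}{D_v}$ is an iterated blow-up of $\prod_{v} \logProdexample{Y_v}{D_v}{I_v}$, lift via the universal property of blow-ups, and finish with properness plus birationality on the dense open locus of distinct points. The one place you do more work than necessary is your first step: the paper does not identify $\FMdegen(\rho)$ as the iterated blow-up of $\pointsdegen(\rho)$ along exactly the restricted centres. It only needs the weaker fact that the preimage in $\FMdegen(\rho)$ of each product diagonal $\delta(J_w)\times\prod_{v\neq w}(Y_v|D_v)^{[I_v]}$ is a Cartier divisor, which holds because each such diagonal occurs among the centres of the iterated blow-up $\FMdegen(\rho)\to\pointsdegen(\rho)$. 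Since the universal property of a blow-up requires only that the pulled-back ideal of each centre become invertible, the iterated transversality verification you flag as the main obstacle --- which you would need for an exact matching of blow-up presentations --- can be dispensed with entirely.
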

\begin{proof}
	We have a composition of modifications \[\FMdegen(\rho) \to \pointsdegen(\rho) \to \prod_{v \in V(\cal{S}_{\rho})} \logProdexample{Y_v}{D_v}{I_v},\] and we note that $\prod_{v \in V(\cal{S}_{\rho})} \logFMexample{{I_v}}{Y_v}{D_v}$ is itself an iterated blow-up of $\prod_{v \in V(\cal{S}_{\rho})} \logProdexample{Y_v}{D_v}{I_v}$. We observe that for each $w$, for any subset $J_w \subset I_w$, the preimage of the diagonals \[\delta(J_w) \times \prod_{w \neq v} (Y_v | D_v)^{[I_v]}\] under the modification $\FMdegen(\rho) \to \prod_{v \in V(\cal{S}_{\rho})} \logProdexample{Y_v}{D_v}{I_v}$ is a Cartier divisor. This is because each diagonal is one of the centres of the iterated blow-up $\FMdegen(\rho) \to \pointsdegen(\rho)$. Therefore, the morphism $\FMdegen(\rho) \to \prod_{v \in V(\cal{S}_{\rho})} \logProdexample{Y_v}{D_v}{I_v}$ factors through the morphism $\FMdegen(\rho) \to  \prod_{v \in V(\cal{S}_{\rho})} \logFMexample{{I_v}}{Y_v}{D_v}$.
	
	The morphism is proper by \cite[Corollary II.4.8(e)]{hartshorne}. It is birational, as it restricts to the identity on the dense open locus where all $n$ points on $W_0$ are distinct and lie away from $D$. Therefore the morphism is a modification. 
\end{proof}

\bibliographystyle{alpha}
\bibliography{references.bib}
\end{document}